\newtheorem{theorem}{Theorem}[section]
\newtheorem{lemma}[theorem]{Lemma}
\newtheorem{proposition}[theorem]{Proposition}
\newtheorem{corollary}[theorem]{Corollary}
\newtheorem{remark}[theorem]{Remark}
\newtheorem{definition}[theorem]{Definition}
\newcommand*{\defeq}{\mathrel{\vcenter{\baselineskip0.5ex \lineskiplimit0pt
                     \hbox{\scriptsize.}\hbox{\scriptsize.}}}%
                     =}
\newcommand{\bb}[1]{\mathbb{#1}}
\DeclareRobustCommand{\underT}[1]{\underaccent{\bar}{#1}}
\newcommand{\underZ}[1]{{#1}_Z}
\newcommand{\KZ}[1]{\underZ{K}^{#1}}
\newcommand{\JZ}[1]{\underZ{J}^{#1}}
\newcommand{\JZP}[1]{J^{#1}_{Z+}}
\newcommand{\rep}[1]{\phi^{#1}}
\newcommand{\repZ}[1]{\underZ{\phi}^{#1}}
\newcommand{\kap}[1]{\kappa^{#1}}
\newcommand{\kapZ}[1]{\underZ{\kappa}^{#1}}
\newcommand{\prim}[1]{{\rep{#1}}'}
\newcommand{\primZ}[1]{{\repZ{#1}}'}
\newcommand{\phihat}[1]{\widehat{\rep{#1}}}
\newcommand{\phihatZ}[1]{\widehat{\repZ{#1}}}
\newcommand{\red}{\mathcal{G}}
\newcommand{\redZ}{\underZ{\mathcal{G}}}
\newcommand{\yZ}{\underZ{y}}
\newcommand{\rhoZ}{\underZ{\rho}}
\newcommand{\phiZ}{\underZ{\phi}}
\newcommand{\SSZ}{\underZ{\mathcal{S}}}
\newcommand{\xiG}{\xi^{i}}
\newcommand{\xiZ}{\underZ{\xi}^{i}}
\newcommand{\weil}{\omega^{i}}
\newcommand{\weilZ}{\underZ{\omega}^{i}}
\newcommand{\WZ}{\underZ{W}^{i}}
\newcommand{\Heis}{\mathcal{H}^{i}}
\newcommand{\HeisZ}{\underZ{\mathcal{H}}^{i}}
\newcommand{\GZ}{\underZ{G}}
\newcommand{\SZ}{\underZ{S}}
\newcommand{\jhat}{\widehat{j}}
\newcommand{\thetaZ}{\underZ{\theta}}
\newcommand{\param}{\varphi}
\newcommand{\paramT}{\underT{\varphi}}
\newcommand{\piT}{\underT{\pi}}
\newcommand{\PhiSC}{\Phi_{\mathrm{sc}}}
\newcommand{\GT}{\underT{G}}
\newcommand{\ST}{\underT{S}}
\newcommand{\JT}{\underT{J}}
\newcommand{\jT}{\widehat{\underT{j}}}
\newcommand{\chiT}{\underT{\chi}_0}
\newcommand{\thetaT}{\underT{\theta}}
\newcommand{\CT}{\widehat{\underT{C}}}
\newcommand{\MT}{\widehat{\underT{M}}}
\DeclareMathOperator{\Fr}{Fr}
\DeclareMathOperator{\Ad}{Ad}
\DeclareMathOperator{\AdT}{\underline{\Ad}}
\DeclareMathOperator{\Res}{Res}
\DeclareMathOperator{\Ind}{Ind}
\DeclareMathOperator{\Gal}{Gal}
\newcommand{\Cent}{\mathrm{Cent}}
\newcommand{\g}{\mathfrak{g}}
\newcommand{\gZ}{\underZ{\mathfrak{g}}}
\newcommand{\G}{\mathrm{G}}
\DeclareMathOperator{\res}{\mathfrak{f}} 
\DeclareMathOperator{\resun}{\overline{\mathfrak{f}}} 
\DeclareMathOperator{\p}{\mathfrak{p}} 
\newcommand{\z}{\mathfrak{z}^i} 
\newcommand{\zstar}{\mathfrak{z}^{i,*}}
\newcommand{\zstarri}{\mathfrak{z}^{i,*}_{-r_i}} 
\newcommand{\zZ}{\underZ{\mathfrak{z}}^i} 
\newcommand{\zZstar}{\underZ{\mathfrak{z}}^{i,*}} 
\newcommand{\zZstarri}{(\underZ{\mathfrak{z}}^{i,*})_{-r_i}} 
\newcommand{\un}{\mathrm{un}}
\newcommand{\s}{\mathfrak{s}} 
\newcommand{\eZ}{\underZ{e}} 
\newcommand{\bigslant}[2]{{\raisebox{.2em}{$#1$}\left/\raisebox{-.2em}{$#2$}\right.}}
\newcommand{\quo}[2]{#1/#2}
\def\blfootnote{\gdef\@thefnmark{}\@footnotetext}
\titleclass{\subsubsubsection}{straight}[\subsection]
\newcounter{subsubsubsection}[subsubsection]
\renewcommand\thesubsubsubsection{\thesubsubsection.\arabic{subsubsubsection}}
\renewcommand\paragraph{\@startsection{paragraph}{5}{\z@}%
  {3.25ex \@plus1ex \@minus.2ex}%
  {-1em}%
  {\normalfont\normalsize\bfseries}}
\renewcommand\subparagraph{\@startsection{subparagraph}{6}{\parindent}%
  {3.25ex \@plus1ex \@minus .2ex}%
  {-1em}%
  {\normalfont\normalsize\bfseries}}
\def\toclevel@subsubsubsection{4}
\def\toclevel@paragraph{5}
\def\toclevel@paragraph{6}
\def\l@subsubsubsection{\@dottedtocline{4}{7em}{4em}}
\def\l@paragraph{\@dottedtocline{5}{10em}{5em}}
\def\l@subparagraph{\@dottedtocline{6}{14em}{6em}}
\begin{document}

\title{Functoriality for Supercuspidal $L$-packets}
\author{Ad\`ele Bourgeois and Paul Mezo}
\date{}

\maketitle

\begin{abstract}

Kaletha constructs $L$-packets for supercuspidal $L$-parameters of tame $p$-adic groups. These $L$-packets consist entirely of supercuspidal representations, which are explicitly described. Using the explicit descriptions, we show that Kaletha's $L$-packets satisfy a fundamental functoriality property desired for the Local Langlands Correspondence. 

\end{abstract}

\blfootnote{This research was supported by the Fields Institute for Research in Mathematical Sciences and the NSERC Discovery Grant RGPIN-2017-06361.

\noindent\textit{MSC2020:} Primary 22E50, 11S37, 11F70

\noindent\textit{Keywords:} Langlands correspondence, functoriality, supercuspidal representation, $p$-adic group} 
 
\tableofcontents

\section{Introduction}

Let $G$ be a connected reductive algebraic group defined over a
nonarchimedean local field $F$.  The Local Langlands Correspondence
(LLC)  is a conjectural map
$$\varphi \mapsto \Pi_{\varphi}$$
from $L$\emph{-parameters} to $L$\emph{-packets} \cite[Chapter
  III]{Borel:1979}.  The latter are finite 
sets of (equivalence classes of) irreducible representations of the
group of $F$-points, $G(F)$. 
The LLC is 
expected to satisfy numerous additional properties which give it
content. We focus on only two properties. The first property concerns
\emph{supercuspidal} representations, for which packets were developed by Kaletha
\cite{Kaletha:Regular, 
  Kaletha:SCPackets}.  The second concerns a basic form of
\emph{functoriality} as listed by Borel in \cite[Desideratum 10.3(5)]{Borel:1979}. This form of functoriality was later refined by Solleveld in \cite[Corollary 2]{Solleveld:2020}.

To describe the expected properties for supercuspidal representations,
we recall that an $L$-parameter is an $L$-homomorphism
$$\varphi: W_{F} \times \mathrm{SL}_{2} \rightarrow {^L}G$$
from the Weil-Deligne group into the $L$-group of $G$
\cite[Section 8.2]{Borel:1979}.  Following \cite[Section
  4.1]{Kaletha:SCPackets}, the $L$-parameter $\varphi$ is defined to
be \emph{supercuspidal} if it is trivial on $\mathrm{SL}_{2}$,
\emph{i.e.}
$$\varphi: W_{F} \rightarrow {^L}G,$$
and its image is not contained in a proper parabolic subgroup of ${^L}G$
\cite[Section 3.3]{Borel:1979}.  As observed in \cite[Section
  4.1]{Kaletha:SCPackets}, ``compound'' $L$-packets (or $L$-packets when $G$ is quasisplit) consisting entirely of
supercuspidal representations are conjectured to correspond precisely
to supercuspidal $L$-parameters (\cite[Section 3.5]{DR}, \cite{AMS}).
In his recent works \cite{Kaletha:Regular, Kaletha:SCPackets}, Kaletha
provides an explicit construction for these conjectured
$L$-packets, under the additional assumptions that $G$ splits over a
tamely ramified extension, and that the residual characteristic $p$ of
$F$ does not divide the order of the the Weyl group of $G$.  He further
proves that the $L$-packets satisfy some important properties
(\emph{e.g.} stability).  

The first goal of this paper is to show that these packets satisfy 
the desired functorial property \cite[Desideratum
  10.3(5)]{Borel:1979}.  For this reason, and from now on, we work
under the aforementioned additional 
assumptions on $G$ and the residual characteristic of $F$.
For the sake of simplicity, we also assume that $G$ is
  quasisplit over $F$ (see the discussion surrounding (\ref{SinG})).
Let $\PhiSC(G)$ denote the set (of conjugacy classes) of
supercuspidal $L$-parameters of $G$. Given $\param\in\PhiSC(G)$, we
let $\Pi_\param$ denote the associated supercuspidal
$L$-packet obtained via Kaletha's construction. 

\begin{theorem}\label{th:desideratum}
Suppose $G$ splits over a tamely ramified extension, is quasisplit and that the residual characteristic $p$ of $F$ does not divide the order of the Weyl group of $G$.
Let $\eta: G \rightarrow \GT$ be an $F$-morphism of connected
reductive $F$-groups such that 
\begin{itemize}
\item[i)] the kernel of $d\eta : \mathrm{Lie}(G) \rightarrow \mathrm{Lie}(\GT)$ is central,
\item[ii)] the cokernel of $\eta$ is an abelian $F$-group.
\end{itemize} 
Let $\paramT\in \PhiSC(\GT)$ and set $\param =
{^L\eta}\circ\paramT$. Then for all $\piT\in \Pi_{\paramT}$,
$\piT\circ\eta$ is the direct sum of finitely many irreducible
supercuspidal representations belonging to $\Pi_{\param}$. 
\end{theorem}

We recall that the map $^L\eta$ is given as follows. We let $\widehat{\eta}: \widehat{\GT} \rightarrow \widehat{G}$ be the induced map on the Langlands dual groups \cite[Sections 1 and 2]{Springer:1979}, and define $^L\eta : {^L\GT}\rightarrow {^LG}$ by $^L\eta(g,w) = (\widehat{\eta}(g),w)$ for all $g\in\widehat{\GT}, w\in W_F$. 

Note that the above theorem is a modified version of \cite[Desideratum 10.3(5)]{Borel:1979}, in which $\eta$ is required to have abelian kernel and cokernel. The hypothesis on $\eta$ is precisely \cite[Condition 1]{Solleveld:2020}, and is stronger \cite[Lemma 5.1]{Solleveld:2020} than that of \cite[Desideratum 10.3(5)]{Borel:1979}. We require this stronger hypothesis to rule out purely inseparable homomorphisms, such as \cite[V.16.1]{Borel:LAG}, that arise in positive characteristic and to ensure that the root systems of $G$ and $\GT$ are identified through $\eta$. 

In addition to proving Theorem~\ref{th:desideratum}, we also provide a description of the components of $\underT{\pi}\circ \eta$. 
The supercuspidal representations that make up
the $L$-packets of Theorem~\ref{th:desideratum} are constructed from
\emph{tame $F$-non-singular elliptic pairs}, which consist of a
particular kind of torus and a character thereof \cite[Definition
  3.4.1]{Kaletha:SCPackets}. Given such a pair 
$(\ST,\thetaT)$ of $\GT$, we let $\pi_{(\ST,\thetaT)}$ denote the
attached supercuspidal representation of $\GT(F)$, which is obtained from the \emph{J.-K.~Yu construction} \cite{Yu:2001} after unfolding $(\ST,\thetaT)$ into an appropriate \emph{$\GT$-datum}. The representation
$\pi_{(\ST,\thetaT)}$ may be reducible, and its
irreducible components form part of an $L$-packet. The
first big result of this paper is writing $\pi_{(\ST,\thetaT)}\circ
\eta$ as a direct sum  of conjugates of $\pi_{(S,\theta)}$, where $(S,\theta)$ is a tame
$F$-non-singular elliptic pair of $G$ that satisfies $\eta(S)\subset
\ST$ and $\theta = \thetaT\circ\eta$. This is Theorem~\ref{th:fullTheorem}.

The composition  $\pi_{(\ST,\thetaT)}\circ\eta$ can be
viewed as the restriction of $\pi_{(\ST,\thetaT)}$ to $\eta(G(F))$.
Having abelian cokernel implies that $\eta(G)$ is a subgroup of
$\GT$ which contains the derived subgroup $[\GT,\GT]$. The kernel of $\eta$, which we denote by $Z$, is a central subgroup by \cite[Lemma 5.1]{Solleveld:2020}. It will be convenient to write $\GZ$ for $\eta(G)$. In this way, $\GZ \simeq G/Z$ and $\GT = Z(\GT)\GZ$.
We compute the restriction of $\pi_{(\ST,\thetaT)}$ to $\eta(G(F)) \simeq G(F)/Z(F)$ in
two steps. First, by restricting to $\eta(G)(F) =
\GZ(F)$, and second, by further restricting to $G(F)/Z(F)$. The group
$\eta(G(F))$ is a normal subgroup of $\GZ(F)$
(Corollary~\ref{cor:normal}) and their quotient is parameterized by a
subgroup of a
Galois cohomology group $H^1(F,Z)$ \cite[Proposition 12.3.4]{Springer:LAG}, which may be nontrivial. The restriction of
supercuspidal representations to subgroups that contain the derived
subgroup was extensively studied in \cite{thesisPaper}. As such, we can
apply the results therein to obtain a description for
$\pi_{(\ST,\thetaT)}|_{\GZ(F)}$ (Theorem~\ref{th:HG}). The second restriction can be computed via Mackey theory, as the quotient $\GZ(F)/(G(F)/Z(F))$ is compact and abelian \cite{Silberger:1979}.  The decomposition of $\pi_{(\ST,\thetaT)}\circ\eta$ is given
in Theorem  \ref{th:mainTheorem}.

In order to describe the supercuspidal representations in the
$L$-packets $\Pi_\param$ and 
$\Pi_{\paramT}$, one must know which tame $F$-non-singular elliptic
pairs to use. These pairs are provided by
\emph{supercuspidal $L$-packet data} \cite[Definition
  4.1.4]{Kaletha:SCPackets}.  The supercuspidal $L$-packet
data for $\param$ and $\paramT$ consist of tuples
$(S,\jhat,\chi_0,\theta)$ and $(\ST,\jT,\chiT,\thetaT)$,
respectively. Unlike the previous paragraph, $S$ and $\ST$ are not subtori of $G$ and $\GT$. Rather, they are embedded into subtori of the respective groups. 
The elements $\jhat$ and $\jT$
specify families of \emph{admissible embeddings} $S(F)\rightarrow
G(F)$ and $\ST(F)\rightarrow \GT(F)$, denoted $J(F)$ and $\JT(F)$,
respectively. Each embedding $j\in J(F)$ $(\underT{j}\in\JT(F))$ is
used to generate a tame $F$-non-singular elliptic pair $(jS,j\theta)$
($(\underT{j}\ST,\underT{j}\thetaT)$), for which we let the components
of $\pi_{(jS,j\theta)}$ ($\pi_{(\underT{j}\ST,\underT{j}\thetaT)}$) be
elements of $\Pi_\param$ ($\Pi_{\paramT}$).

In order to apply our decomposition formula for
$\pi_{(\underT{j}\ST,\underT{j}\thetaT)}\circ\eta$
and relate it to representations in $\Pi_\param$, we must first
establish an appropriate relationship between the supercuspidal $L$-packet data and
the admissible embeddings. This is given to us by
Theorem~\ref{th:data}, another key result of this paper, in which we
show that for all $\underT{j}\in\JT(F)$, there exists $j\in J(F)$ such
that $\eta(jS)\subset \underT{j}\ST$ and $j\theta =
\underT{j}\thetaT\circ\eta$. As such, we obtain a decomposition formula
for $\pi_{(\underT{j}\ST,\underT{j}\thetaT)}\circ\eta$ in terms of
certain conjugates of $\pi_{(jS,j\theta)}$.  This completes the
proof of Theorem~\ref{th:desideratum}.

The second goal of this paper is to provide a more detailed
  description of 
the decomposition of $\underT{\pi} \circ \eta$ in Theorem
\ref{th:desideratum} in the special case 
that both $\param$ and
$\paramT$ are \emph{regular} supercuspidal $L$-parameters
\cite[Definition 5.2.3]{Kaletha:Regular}.
The regularity assumptions on the $L$-parameters have several pleasant
consequences.  We list them for $\varphi$, with the understanding that
their analogues hold for $\underT{\varphi}$.  First, the representations
$\pi_{(jS,j\theta)}$ are irreducible for all $j \in J(F)$. From this it follows
that the set $J(F)$ parameterizes the
representations in $\Pi_{\varphi}$.
Second, the set $J(F)$ is in bijection 
with characters of the usual component group that one is accustomed to
seeing in Langlands correspondences.  More precisely, the component
group of the centralizer of the image of $\varphi$ is in bijection
with the Galois-fixed subgroup of some torus $\hat{S}$ \cite[Lemma
  5.3.4]{Kaletha:Regular}, and certain characters of this torus
are in bijection with $J(F)$ (see (\ref{kottwitz86})).  Since
$\hat{S}$ is abelian, so too is the component group.

The regularity assumptions consequently allow us to write
$$\underT{\pi} = \pi_{(\underT{j}\ST,\underT{j}\thetaT)} =
\pi_{(\paramT,\underT{\varrho})},$$ 
where $\underT{j}\in\underT{J}(F)$ corresponds to a character
$\underT{\varrho}$ of the component group for $\paramT$.  The map $\hat{\eta}$
sends the component group for $\underT{\varphi}$ to the one for
$\varphi$.  The precise description of $\underT{\pi} \circ \eta$
is given in Proposition  \ref{prop:otherpinning}.  We summarize it
here as follows. 
\begin{theorem}\label{th:Solleveld}
Let $\eta: G\rightarrow \GT$, $\paramT$ and $\param =
{^L\eta}\circ\paramT$ be as in Theorem
\ref{th:desideratum}. Assume that $\param$ and
$\paramT$  are regular. Then 
$$\pi_{(\paramT,\underT{\varrho})}\circ \eta \simeq \underset{\varrho}{\bigoplus}\
\mathrm{Hom}\left( \underT{\varrho}, (\varrho\circ \hat{\eta}) \right)
\otimes \pi_{(\param,\varrho)}$$
where $\underT{\varrho}$ and $\varrho$ are characters of the Langlands
component groups.
\end{theorem}

Theorem \ref{th:Solleveld} is the proof of a conjecture of Solleveld for
regular supercuspidal $L$-parameters \cite[Conjecture
  2]{Solleveld:2020}.  Solleveld  has proved his conjecture in a
variety of cases \cite[Theorem
  3]{Solleveld:2020}. The only overlap of Theorem \ref{th:Solleveld} with
these cases is when $G$ and $\underT{G}$ are inner forms of
$\mathrm{GL}_{n}$, $\mathrm{SL}_{n}$ or $\mathrm{PGL}_{n}$. 

One might hope that the regularity of $\param =
{^L\eta}\circ\paramT$ in Theorem \ref{th:Solleveld} would follow from
the regularity of $\underT{\varphi}$.  While this is not true in general, as illustrated with a counterexample at the end of Section \ref{sec:regularParams}, the converse implication holds (Corollary
\ref{cor:regularparams}).  Furthermore, as explained after
\cite[Definition 3.7.3]{Kaletha:Regular},  regular 
$L$-parameters are typical among all supercuspidal $L$-parameters.
 
Let us discuss how one might extend Theorem \ref{th:Solleveld} to non-regular supercuspidal $L$-parameters. In this case, $\underT{J}(F)$ is no longer a parameterizing set for $\Pi_{\paramT}$ since the representations $\pi_{(\underT{j}\ST,\underT{j}\thetaT)}, \underT{j}\in \underT{J}(F),$ may be reducible. For each $\underT{j}\in \underT{J}(F)$, the irreducible components of $\pi_{(\underT{j}\ST,\underT{j}\thetaT)}$ are parameterized by certain representations of $N(\underT{j}\ST,\GT)(F)_{\underT{j}\thetaT}$, the stabilizer of the pair $(\underT{j}\ST,\underT{j}\thetaT)$ in $N(\underT{j}\ST,\GT)(F)$ \cite[Corollary 3.4.7]{Kaletha:SCPackets}. It appears that \cite[Proposition 4.3.2]{Kaletha:SCPackets} serves as a bridge between $\{N(\underT{j}\ST,\GT)(F)_{\underT{j}\thetaT}: \underT{j}\in\underT{J}(F)\}$ and the component group of the centralizer of the image of $\paramT$. Another key step in the proof of Theorem \ref{th:Solleveld} is the decomposition formula for $\pi_{(\underT{j}\ST,\underT{j}\thetaT)}\circ \eta$. Removing the regularity hypothesis means one would need to derive the decomposition formula of $\underT{\pi}\circ \eta$, where $\underT{\pi}$ is an irreducible component of $\pi_{(\underT{j}\ST,\underT{j}\thetaT)}$. This would require a deeper study of the results in \cite[Section 3]{Kaletha:SCPackets}. Once one has such a decomposition formula, we believe that similar arguments as the ones in Section \ref{sec:SolleveldConj} could be applied.

Let us briefly indicate what is required to extend
Theorem \ref{th:desideratum} and Theorem  \ref{th:Solleveld} to 
non-quasisplit groups.  Every  connected reductive
algebraic $F$-group $G'$ is an inner form of a quasisplit form $G$.
When $\mathrm{char} F = 0$, the group $G'$ may be assigned to a class
of \emph{rigid inner twists} for $G$ 
\cite[Corollary 3.8 and Section 5.1]{Kaletha:2016}.  This class 
is an element in a set of the form $H^{1}(u \rightarrow W, Z'
\rightarrow G)$ which we shall not describe.  For any $j\in J(F)$,
there is a natural surjection 
\begin{equation}
  \label{SinG}
H^{1}(u \rightarrow W, Z' \rightarrow jS) \  \longrightarrow \ H^{1}(u \rightarrow W, Z'
\rightarrow G),
\end{equation}
where $jS$ is a maximal torus of $G$.
The elements in a supercuspidal $L$-packet of $G'$ are indexed by
the fibre in $H^{1}(u \rightarrow W, Z' \rightarrow jS)$ over the class
in $H^{1}(u \rightarrow W, Z' \rightarrow G)$ corresponding to $G'$
\cite[Section 5.3]{Kaletha:Regular}.
If one is only interested in the quasisplit form \emph{i.e.} $G' = G$, the
classes of rigid inner twists may be chosen to equal the usual Galois cohomology sets (which we recall more precisely below), and a supercuspidal packet is indexed by the fibre of
the more familiar map
\begin{align}\label{eq:Hmap}
H^{1}(F,jS) \rightarrow H^{1}(F, G)
\end{align}
over the trivial class.  This fibre is in bijection with the set of
admissible embeddings $J(F)$ above.  In general, the fibre of
(\ref{SinG}) corresponding to $G'$ is in bijection with the set
$J'(F)$ of admissible
embeddings into (the rigid inner twist for) $G'(F)$.  When
$\mathrm{char}F \neq 0$, a parallel picture is given in 
\cite{Dillery}.
The constructions
and results of Sections \ref{sec:supercuspidalRelations} and \ref{sec:packetRelations} apply to $G'(F)$ and $J'(F)$ 
in exactly the same manner as they do to $G(F)$ and $J(F)$.  More work is required to
accommodate $G'(F)$ and $J'(F)$ in the
constructions of Section \ref{sec:specializeReg}.  Rather than working with characters
of $\pi_{0}(\hat{S}^{\Gamma}/Z(\hat{G})^{\Gamma})$, one works with the
characters of the larger group $\pi_{0}([\widehat{\bar{S}}]^{+})$
which appears in \cite[Lemma 5.3.4]{Kaletha:Regular} and \cite[Corollary
  5.4]{Kaletha:2016}.   The admissible embeddings $J'(F)$ correspond
to certain characters of $\pi_{0}([\widehat{\bar{S}}]^{+})$ and
these characters correspond to the representations in the $L$-packet
for $G'$.  A discussion of such matters may be found in \cite[Section
  5.4]{Kaletha:2016}.  In view of the length of this paper, which
deals only with quasisplit groups, it seems prudent to leave the
treatment of non-quasisplit groups to some future work.

The paper is organized as follows. In
Section~\ref{sec:supercuspidalRelations}, we start by giving a summary
of the construction of supercuspidal representations from tame
$F$-non-singular elliptic pairs $(\ST,\thetaT)$ (Section~\ref{sec:summaryKaletha}).
We then move on to proving Theorem~\ref{th:fullTheorem}, which
describes the decomposition of $\pi_{(\ST,\thetaT)}\circ\eta$
(Section~\ref{sec:MainTheorem}). In Section~\ref{sec:packetRelations}, we provide a summary
of the construction of the supercuspidal $L$-packets
(Section~\ref{sec:constructPackets}). We then establish the
relationship between the supercuspidal $L$-packet data associated to $\param$ and
$\paramT$ (Section~\ref{sec:packetData}) and end with  the
proof of Theorem~\ref{th:desideratum}
(Section~\ref{sec:proofDesideratum}). In Section \ref{sec:specializeReg}, we start by describing the regular supercuspidal $L$-parameters and their corresponding $L$-packet structure, as well as a discussion on when one might expect both parameters $\param$ and $\paramT$ to be regular (Section \ref{sec:regularParams}). We then proceed to reparameterizing the $L$-packets in terms of characters of their corresponding component groups and proving Theorem \ref{th:Solleveld}  (Section \ref{sec:SolleveldConj}).

We close this introduction with a list of notation. Given the nonarchimedean local field $F$, we denote by $\mathcal{O}_F$ its ring of integers, $\p_F$ 
the unique maximal ideal of $\mathcal{O}_F$ and $\res$ its residue
field of prime characteristic $p$. Let $F^\un$ be a maximal unramified
extension of $F$. The residue field of $F^\un$ is
an algebraic closure of $\res$, so we denote it by $\resun$. The Galois group $\Gal(F^\un/F)$ is canonically isomorphic to $\Gal(\resun/\res)$, and we denote its generator by $\mathrm{Fr}$. Let
$\Gamma = \Gal(F^{\mathrm{sep}}/F)$ denote the absolute Galois groups of $F$, where $F^{\mathrm{sep}}$ is a separable closure of $F$. We use the notation $I_F$ and $P_F$ for the inertia subgroup and wild inertia
subgroup of the Weil group $W_F$, respectively. We also let $E$ denote the tamely ramified extension of $F$ over which $G$ splits.

In various instances of the paper, we will encounter different types of cohomology groups. Given an algebraic group $G'$ that is defined over $F$, we write $H^1(F,G')$ for $H^1(\Gamma,G'(F^{\mathrm{sep}}))$. Similarly, given an algebraic group $\mathcal{G}'$ that is defined over $\res$, we write $H^1(\res,\mathcal{G}')$ for $H^1(\Gal(\resun/\res),\mathcal{G}'(\resun))$. Furthermore, given a group $\tilde{G}$ with $\Gal(F^\un/F)$-action, we write $H^1(\mathrm{Fr},\tilde{G})$ for $H^1(\Gal(F^\un/F),\tilde{G})$ and $\tilde{G}^{\mathrm{Fr}}$ for $\tilde{G}^{\Gal(F^\un/F)}$.

Given a maximal torus $T$ of $G$, we let $R(G,T)$ denote the root
system of $G$ with respect to $T$. Given $\alpha\in R(G,T)$, we denote
the associated root subgroup by $U_\alpha$. Letting $\underT{T}$ denote
the maximal torus of $\GT$ such that $\eta(T) = \underT{T}\cap\eta(G)$
(given by \cite[Theorem 2.2]{thesisPaper}), the root systems $R(G,T)$
and $R(\GT,\underT{T})$ are canonically identified, and the Weyl groups
of $G$ and $\GT$ coincide. We use $\g$ and $\underT{\g}$ for the Lie
algebras of $G$ and $\GT$, respectively. 

Finally, given groups $H_1\subset H_2$, $h\in H_2$ and a representation $\gamma$ of $H_1$, we let $^hH_1 := \mathrm{Ad}(h)(H_1) = hH_1h^{-1}$ and $^h\gamma := \gamma \circ \mathrm{Ad}(h^{-1})$.

We would like to thank Tasho Kaletha for taking the time to answer our questions regarding the results of his papers, as well as providing
guidance in establishing the content of this manuscript.  We would
like to thank Maarten Solleveld for pointing out the missing
hypotheses for non-zero characteristic in an earlier draft. We would also like to thank Anne-Marie Aubert for her comments, which in particular allowed us to find a counterexample to the converse of Lemma \ref{lem:regularpairs}.
Finally, we are also grateful to Monica Nevins for her insights on the J.-K.~Yu
construction, which allowed us to finalize the proofs that lead to our
decomposition formula.

\section{The Decomposition of $\pi_{(\ST,\thetaT)}\circ\eta$}\label{sec:supercuspidalRelations}

In \cite{Kaletha:Regular, Kaletha:SCPackets}, Kaletha describes a way
to construct a supercuspidal representation $\pi_{(\ST,\thetaT)}$ of
$\GT$ from a tame $F$-non-singular elliptic pair $(\ST,\thetaT)$
\cite[Definition 3.4.1]{Kaletha:SCPackets}. Here $\ST$ is a
maximally unramified elliptic maximal torus and $\thetaT$  is a character of $\ST(F)$ satisfying a certain \emph{non-singularity}
condition. The irreducible components of these representations are
what make up the supercuspidal $L$-packets. As such, finding a decomposition formula for $\pi_{(\ST,\thetaT)}\circ\eta$ is crucial in proving Theorem~\ref{th:desideratum}. 

\subsection{The Construction of Supercuspidal Representations from Tame $F$-non-singular Elliptic Pairs}\label{sec:summaryKaletha}


Let us recall the construction of supercuspidal representations from tame $F$-non-singular elliptic pairs as per \cite{Kaletha:Regular,Kaletha:SCPackets}. For simplicity of notation, we will describe the construction over $G$, though it is also applied to $\GZ \simeq G/Z$ and $\GT$.

The reader is assumed to be familiar with the structure theory of $p$-adic groups. Following the notation from \cite{Kaletha:Regular}, we write $\mathcal{B}(G,F)$ for the reduced building of $G$ over $F$ and $\mathcal{A}(G,T,F)$ for the apartment associated to any maximal torus $T$ of $G$ which is maximally split. For each $x\in\mathcal{B}(G,F)$, we set $G(F)_x$ to be the stabilizer of $x$ in $G(F)$. Furthermore,
for $r > 0$, $G(F)_{x,r}$ denotes the Moy-Prasad filtration subgroup
of the parahoric subgroup $G(F)_{x,0}$. We will be using Kaletha and Prasad's definitions \cite[Definition 13.2.1]{KalethaPrasad}, which coincide with the ones of Moy and Prasad given our tameness assumption \cite[p.XXV]{KalethaPrasad}. We also set $G(F)_{x,r^+} =
\underset{t >r}{\bigcup}G(F)_{x,t}$. We use colons to abbreviate
quotients, that is $G(F)_{x,r:t} = \quo{G(F)_{x,r}}{G(F)_{x,t}}$ for
$t > r$. We have analogous filtrations of
$\mathcal{O}_F$-submodules at the level of the Lie algebra.  

For all $r>0$, the quotient $G(F)_{x,r:r^+}$ is an abelian group and
is isomorphic to its Lie algebra analog $\g(F)_{x,r:r^+}$ via Adler's
mock exponential map \cite{Adler:1998}. The quotient $G(F)_{x,0:0^+}$
is also very important, as it results in the $\res$-points of a
reductive group, which we refer to as the \emph{reductive quotient of $G$ at $x$}. 

The construction of the supercuspidal representation
$\pi_{(S,\theta)}$ of $G$ starts from a tame $F$-non-singular elliptic pair
$(S,\theta)$ in the sense of \cite[Definition
  3.4.1]{Kaletha:SCPackets}. The representation $\pi_{(S,\theta)}$ is
obtained in two steps. One starts by unfolding the pair $(S,\theta)$
into a $G$-datum $\Psi_{(S,\theta)} =
(\vec{G},y,\vec{r},\rho,\vec{\phi})$ in the sense of \cite[Section
  3]{Yu:2001}. We will refer to $\Psi_{(S,\theta)}$ as the \emph{corresponding $G$-datum} of $(S,\theta)$. The properties of $S$ and $\theta$ provided by \cite[Definition
  3.4.1]{Kaletha:SCPackets} allow us to go to the reductive quotient and use the theory of Deligne-Lusztig cuspidal representations in order to construct $\rho$, the so-called \emph{depth-zero} part of the datum $\Psi_{(S,\theta)}$. The second step consists of applying the J.-K.~Yu construction
\cite{Yu:2001} on the obtained $G$-datum. The unfolding of
the tame $F$-non-singular elliptic pair into a $G$-datum is given as follows. 

\paragraph{The twisted Levi sequence $\vec{G}$ and the sequence $\vec{r}$:} We recall how to construct a Levi sequence from $S$ as per \cite[Section 3.6]{Kaletha:Regular}. For all $r>0$, let ${E_r^\times = 1+ \mathfrak{p}_E^{\lceil er \rceil}}$, where $e$ denotes the ramification degree of $E/F$. We consider the set ${R_r \defeq \{\alpha\in R(G,S): \theta(N_{E/F}(\check{\alpha}(E^\times_r)))=1\}}$,where $\check{\alpha}$ is the coroot associated to $\alpha$ and $N_{E/F}$ is the norm of $E/F$, and set $R_{r^+} = \underset{s>r}{\cap}R_s$. There will be breaks in this filtration, $r_{d-1}>r_{d-2}>\cdots > r_0 > 0$, and we set $r_{-1} = 0$ and $r_d = \mathrm{depth}(\theta)$. Then for all $0\leq i\leq d$, $G^i\defeq \langle S,U_\alpha:\alpha\in R_{r_{i-1}^+} \rangle$ is a tamely ramified twisted Levi subgroup of $G$ \cite[Lemma 3.6.1]{Kaletha:Regular}. These twisted Levi subgroups are what we use to form the twisted Levi sequence $\vec{G} = (G^0,\dots,G^d)$. We also set $G^{-1} = S$ and $\vec{r} = (r_0,\dots,r_d)$. 

\paragraph{The character sequence $\vec{\phi}$:} By \cite[Proposition 3.6.7]{Kaletha:Regular} , given the character $\theta$ of $S(F)$, there exists a Howe factorization, that is a sequence of characters $\rep{i}: G^i(F) \rightarrow \mathbb{C}^\times$ for $i=-1,\dots,d$ such that
\begin{enumerate}
\item[1)] $\theta = \prod\limits_{i=-1}^d\rep{i}|_{S(F)}$;
\item[2)] for all $0\leq i\leq d$, $\rep{i}$ is trivial on the simply connected cover of $G^i$;
\item[3)] for all $0\leq i < d$, $\rep{i}$ is a $G^{i+1}(F)$-generic character of depth $r_i$ in the sense of \cite[Definition 3.9]{HM:2008}. For $i=d$, $\rep{d}$ is trivial if $r_d = r_{d-1}$ and has depth $r_d$ otherwise. For $i=-1$, $\rep{-1}$ is trivial if $G^0=S$ and otherwise satisfies $\rep{-1}|_{S(F)_{0^+}}=1$.
\end{enumerate}
Given such a factorization, we set $\vec{\phi} = (\rep{0},\dots,\rep{d})$.

\paragraph{The point $y$:} Since $(S,\theta)$ is a tame $F$-non-singular elliptic pair, we have that $S$ is a maximally unramified elliptic maximal $F$-torus in the sense of \cite[Definition 3.4.2]{Kaletha:Regular}. As such, we can associate to it a vertex $y$ of $\mathcal{B}(G,F)$ \cite[Lemma 3.4.3]{Kaletha:Regular}, which is the unique $\Gal(F^\un/F)$-fixed point of $\mathcal{A}(G,S,F^\un)$ \cite[Section 17.8]{KalethaPrasad}.  

\paragraph{The representation $\rho$:} Let $\red$ denote the reductive
quotient of $G^0$ at $y$, that is the connected reductive $\res$-group
such that $\red(\resun) \simeq G^0(F^\un)_{y,0:0^+}$ and $\red(\res)
\simeq G^0(F)_{y,0:0^+}$. The construction of $\red$ is
summarized in
\cite[Section 8.4.2]{KalethaPrasad}. One starts with the relative
identity component $\red^0_y$ of a $\mathcal{O}_{F}$-group scheme
associated to $y$,  whose
existence is guaranteed by \cite[Proposition
 8.3.1 and Section 9.2.5]{KalethaPrasad}.  One then takes the special
fibre
$\overline{\red^0_y}$ of  $\red^0_y$, and defines $\red$ to be the
quotient by its unipotent radical, $\red :=
\overline{\red^0_y}/R_u(\overline{\red^0_y})$. By \cite[Theorem
 8.3.13]{KalethaPrasad}, $G^0(F^\un)_{y,0} =
\red^0_y(\mathcal{O}_{F^\un})$. The projection map
\begin{equation}
 \label{projmap}
G^0(F^\un)_{y,0} = \red^0_y(\mathcal{O}_{F^\un}) \rightarrow
\overline{\red^0_y}(\resun)
\end{equation}
is surjective, and the
preimage of $R_u(\overline{\red^0_y})(\resun)$ under this map is equal
to $G^0(F^\un)_{y,0^+}$ \cite[Corollary 8.4.12]{KalethaPrasad}, whence
$\red(\resun)\simeq G^0(F^\un)_{y,0:0^+}$.

There is a natural action of  $\Gal(F^\un/F)$ on
$\red^0_y(\mathcal{O}_{F^\un})$ and a natural action of
$\Gal(\resun/\res)$ on $\overline{\red^0_y}(\resun)$ and the map
(\ref{projmap}) is Galois-equivariant with respect to these actions.
The Galois-equivariance passes to the isomorphism
$G^0(F^\un)_{y,0:0^+}\simeq \red(\resun)$.

By \cite[Lemma 3.4.4]{Kaletha:Regular}, there exists an elliptic maximal $\res$-torus $\mathcal{S}$ of $\red$ such that for every unramified extension $F'$ of $F$, the image of $S(F')_0$ in $G(F')_{y,0:0^+}$ is isomorphic to $\mathcal{S}(\res')$. For every character $\overline{\chi}$ of $\mathcal{S}(\res)$, one can construct a virtual character of $\mathcal{G}(\res)$ as per \cite{DL:1976}, which we denote by $R_{\mathcal{S},\overline{\chi}}$. When $\overline{\chi}$ is non-singular in the sense of \cite[Definition 5.15]{DL:1976}, $\pm R_{\mathcal{S},\overline{\chi}}$ is a Deligne-Lusztig cuspidal representation of $\mathcal{G}(\res)$ \cite[Proposition 7.4, Theorem 8.3]{DL:1976}. Note that the sign $\pm$ refers to $(-1)^{r_{\res}(\red)-r_{\res}(\mathcal{S})}$, where $r_{\res}(\red)$ and $r_{\res}(\mathcal{S})$ denote the $\res$-split ranks of $\red$ and $\mathcal{S}$, respectively. By \cite[Lemma 3.4.14]{Kaletha:Regular}, the character $\rep{-1}|_{S(F)_0}$ factors through to a character $\overline{\rep{-1}}$ of $\mathcal{S}(\res)$ which is non-singular, meaning that the virtual character $\pm R_{\mathcal{S},\overline{\rep{-1}}}$ is a (possibly reducible) Deligne-Lusztig cuspidal representation of $\red(\res)$.  
The pullback of $\pm R_{\mathcal{S},\overline{\rep{-1}}}$ to $G^0(F)_{y,0}$ then gets extended uniquely to a representation $\upkappa_{(S,\rep{-1})}$ of $S(F)G^0(F)_{y,0}$ and $\rho\defeq \Ind_{S(F)G^0(F)_{y,0}}^{G^0(F)_{y}}\upkappa_{(S,\rep{-1})}$. This construction is summarized in Figure~\ref{fig:summaryRho}. Note that we are following the notation from \cite{Kaletha:Regular} in the paragraph above. What we have denoted by $\rho$ is denoted by $\kappa_{(S,\rep{-1})}$ in \cite[Section 3.3]{Kaletha:SCPackets}.

\begin{figure}[!htbp]
\begin{center}
\begin{tikzcd}[column sep = 0]
{\left( \pm R_{\mathcal{S},\overline{\rep{-1}}}, \red(\res) \right)}\arrow[mapsto]{rrrrrrrr}{\text{pullback, extend}} 
     &{} &{} &{} &{} &{} &{} &{} &{\left( \upkappa_{(S,{\rep{-1}})}, S(F)G^0(F)_{y,0} \right)}\arrow[mapsto]{rrrrr}{\text{induce}} &{} &{} &{} &{} &{\left( \rho, G^0(F)_{y} \right)} 
\end{tikzcd}
\end{center}

%
\caption{Summary of the construction of $\rho$.}
\label{fig:summaryRho}
\end{figure}

Once we have the $G$-datum $\Psi_{(S,\theta)} = (\vec{G},y,\vec{r},\rho,\vec{\phi})$, we apply the J.-K.~Yu construction to obtain the supercuspidal representation $\pi_{(S,\theta)}$. We do not recall all the details of this construction, but provide a summary in the form of a diagram (Figure~\ref{fig:summaryJKYu}). We invite the reader to consult \cite[Section 3]{thesisPaper} for a brief description of the steps involved. We point out that it is sometimes convenient to write $\kappa_G(\Psi_{(S,\theta)})$ for $\kappa_{(S,\theta)}$ and $\pi_G(\Psi_{(S,\theta)})$ for $\pi_{(S,\theta)}$ to indicate that we are applying the J.-K.~Yu construction on the $G$-datum $\Psi_{(S,\theta)}$.  

\begin{figure}[!htbp]
\center \begin{tikzpicture}
  \matrix (m) [matrix of math nodes,row sep=0.8em,column sep=0.9em,minimum width=0.1em]
  {
     {} & \text{$\left( \rep{0}, K^0\right)$}    & \text{$\cdots$} &\text{$\left(\rep{d-2},K^{d-2}\right)$}  & \text{$\left(\rep{d-1}, K^{d-1}\right)$}  & \text{$\left(\rep{d}, K^d\right)$}\\
        {} & {} & {} & {} & {}  &{} & {} \\
    \text{$\left(\rho, K^0\right)$} &\text{$\left(\prim{0}, K^1\right)$} & \text{$\cdots$} &\text{$\left(\prim{d-2},K^{d-1}\right)$}& \text{$\left(\prim{d-1},K^d\right)$}  &  \text{$\left(\prim{d}, K^d\right)$}\\
      {} & {} & {} & {}  & {}  & {}  & {} & {} \\
     \text{$\left(\kap{-1}, K^d\right)$} & \text{$\left(\kap{0},K^d\right)$}  & \text{$\cdots$} &\text{$\left(\kap{d-2},K^d\right)$}&  \text{$\left(\kap{d-1},K^d\right)$}  &  \text{$\left(\kap{d}, K^d\right)$}\\
    };
  \path[-stealth]
    (m-1-2) edge node [right] {extend} (m-3-2)
    (m-1-4) edge node [right] {extend} (m-3-4)
    (m-1-5) edge node [right] {extend} (m-3-5)
    (m-1-6) edge node [right] {=} (m-3-6)
    (m-3-1) edge node [right] {inflate} (m-5-1)
    (m-3-2) edge node [right] {inflate} (m-5-2)
    (m-3-4) edge node [right] {inflate} (m-5-4)
    (m-3-5) edge node [right] {=} (m-5-5)
    (m-3-6) edge node [right] {=} (m-5-6)
;
\end{tikzpicture}
$\pi_{(S,\theta)} \defeq \Ind_{K^d}^G\kappa_{(S,\theta)}$, where
$$\kappa_{(S,\theta)} = \kap{-1} \otimes \kap{0} \otimes \kap{1} \otimes \kap{2} \cdots \otimes \kap{d-2} \otimes \kap{d-1} \otimes \kap{d}$$
\caption{Summary of the J.-K.~Yu construction for $\pi_{(S,\theta)}$, where $K^0 = G^0(F)_{y}$ and ${K^{i+1} = K^0G^1(F)_{y,r_0/2}\cdots G^{i+1}(F)_{y,r_i/2}}, 0\leq i\leq d-1$.}
\label{fig:summaryJKYu}
\end{figure}

The representation $\rho$ above may be reducible, and its irreducible components are given by \cite[Theorem 2.7.7]{Kaletha:SCPackets}. While the definition of a $G$-datum in \cite{Yu:2001} requires $\rho$ to be irreducible, we may still apply the steps of the J.-K.~Yu construction on $(\vec{G},y,\vec{r},\rho,\vec{\phi})$ to obtain $\pi_{(S,\theta)}$, which is a completely reducible supercuspidal representation independent of the chosen Howe factorization \cite[Corollary 3.4.7]{Kaletha:SCPackets}. We use the notation $[\pi_{(S,\theta)}]$ for the set of irreducible components of $\pi_{(S,\theta)}$.

From the pair $(S,\theta)$, one may also perform what we call a \emph{twisted} J.-K.~Yu construction. Indeed, following \cite[Section 4.1]{FKS}, let $\epsilon = \prod\limits_{i=1}^d\epsilon^{G^i/G^{i-1}}$, where $\epsilon^{G^i/G^{i-1}}$is the quadratic character of $K^d$ that is trivial on $G^1(F)_{y,r_0/2}\cdots G^d(F)_{y,r_d/2}$ and whose restriction to $K^0$ is given by $\epsilon^{G^i/G^{i-1}}_y$ defined in \cite[Definition 4.1.10]{FKS}. The so-called twisted representation then refers to $\Ind_{K^d}^{G}(\kappa_{(S,\theta)}\cdot \epsilon)$, which is equivalent to constructing $\pi_{(S,\theta\cdot \epsilon)}$ via the above steps. 

\subsection{Computing the Decomposition}\label{sec:MainTheorem}

The goal of this section is to write the decomposition formula for $\pi_{(\ST,\thetaT)}$ (Theorem \ref{th:fullTheorem}). In order to describe this formula, we must first describe how we allow elements of $\GT(F)$ to act on representations of $G(F)$.

%

Let $\underT{g}\in\GT(F)$. Using the equality $\GT = \GZ\, Z(\GT)$, write $\underT{g} = \underZ{g}\,z$ for some ${\underZ{g} \in \GZ,} {z\in Z(\GT)}$. Since $\GZ$ is the image of $\eta$, there exists $g\in G$ such that $\underZ{g} = \eta(g)$. It follows that $\Ad(\underT{g}) = \Ad(\eta(g))$. We also set $\AdT(\underT{g}) := \Ad(g)$, an automorphism of $G$.

\begin{lemma}\label{lem:AdMap}
For all $\underT{g}\in\GT(F)$, $\AdT(\underT{g}) \in \mathrm{Aut}(G(F))$ is defined over $F$. Furthermore, the map 
\begin{align*}
\AdT : \GT(F) &\rightarrow \mathrm{Aut}(G(F)) \\
\underT{g} &\mapsto \AdT(\underT{g})
\end{align*}
is a well-defined homomorphism.
\end{lemma}

\begin{proof}
It is clear that $\AdT(\underT{g})$ maps $G$ to $G$. To conclude that $\AdT(\underT{g})$ is defined over $F$ (and therefore maps $G(F)$ to $G(F)$), we show that $\AdT(\underT{g})\circ \sigma = \sigma\circ \AdT(\underT{g})$ for all $\sigma\in\Gamma$. Recall that $\AdT(\underT{g}) = \Ad(g)$, where $g\in G$ is such that $\underT{g} = \eta(g)z$ for some $z\in Z(\GT)$. Since $\eta$ is defined over $F$ and $\underT{g} \in \GT(F)$, we have the following chain of equalities:
\begin{align*}
\eta\circ\sigma\circ\AdT(\underT{g}) 
&= \sigma\circ \Ad(\eta(g))\circ \eta \\
&= \sigma\circ \Ad(\underT{g}) \circ \eta \\
&= \Ad(\underT{g})\circ \eta \circ \sigma \\
&= \Ad(\eta(g))\circ \eta \circ \sigma \\
&= \eta\circ \AdT(\underT{g})\circ \sigma.
\end{align*}

Given $x\in G$, the previous equality implies $(\sigma\circ \AdT(\underT{g}))(x) = (\AdT(\underT{g})\circ \sigma)(x)z_x$ for some $z_x\in Z$. Define the map
\begin{align*}
f: G&\rightarrow Z \\
x &\mapsto z_x.
\end{align*}
This is a homomorphism, and is trivial on $Z(G)$. Furthermore, because $Z$ is abelian, $f$ is also trivial on $[G,G]$. Thus, $f$ is trivial on $G = [G,G]\,Z(G)$, and $z_x = 1$ for all $x\in G$. We conclude that $\AdT(\underT{g})\circ \sigma = \sigma \circ \AdT(\underT{g})$, as desired.
To show that the map $\AdT$ is well-defined, assume $\underT{g} = \eta(g_1)z_1 = \eta(g_2)z_2$, where $g_1,g_2\in G$, $z_1,z_2\in Z(\GT)$. It follows that ${\eta(g_1g_2^{-1}) = z_1^{-1}z_2 \in Z(\GT)\cap \GZ \subset Z(\GZ)}$, and therefore $g_1g_2^{-1}\in Z(G)$. We conclude that $\Ad(g_1) = \Ad(g_2)$, and thus $\AdT(\underT{g})$ is well defined. It is straightforward to show that $\AdT$ is a homomorphism.
\end{proof}

\begin{corollary}\label{cor:normal}
The group $\eta(G(F))$ is normal in $\GT(F)$.
\end{corollary}

\begin{proof}
Let $h\in G(F)$ and $\underT{g}\in\GT(F)$. We show that $\Ad(\underT{g})(\eta(h)) \in \eta(G(F)).$ Following the notation above, we have that $\Ad(\underT{g}) = \Ad(\eta(g))$ for some $g\in G$. It follows that $$\Ad(\underT{g})(\eta(h)) = \eta\circ\Ad(g)(h) = \eta\circ\AdT(\underT{g})(h).$$ By the previous lemma, $\AdT(\underT{g})$ is defined over $F$, which implies $\AdT(\underT{g})(h)\in G(F)$. Thus, we conclude that $\Ad(\underT{g})(\eta(h)) \in \eta(G(F)).$
\end{proof}

The following lemma will also be useful in proving the main statements of this section.

\begin{lemma}\label{lem:conjThrough}
Let $\underZ{\pi}$ be a representation of $\GZ(F)$ and $\underT{g}\in \GT(F)$. Then $$^{\underT{g}}\underZ{\pi}\circ\eta = \underZ{\pi}\circ\eta\circ\AdT(\underT{g}^{-1}).$$
\end{lemma}

\begin{proof}
We have that $\AdT(\underT{g}) = \Ad(g)$, where $g\in G$ satisfies $\underT{g} = \eta(g)z$ for some $z\in Z(\GT)$. For all $h\in G(F)$,
$$\begin{aligned}
\underZ{\pi}\circ \eta\circ \AdT(\underT{g}^{-1})(h) &= \underZ{\pi}\circ\eta(g^{-1}hg)\\
& = \underZ{\pi}(\eta(g)^{-1}\eta(h)\eta(g))\\
& = {^{\eta(g)}\underZ{\pi}}\circ \eta(h) \\
&= {^{\underT{g}}\underZ{\pi}}\circ\eta(h).
\end{aligned}$$
\end{proof}

We are now ready to state the decomposition formula of $\pi_{(\ST,\thetaT)}\circ\eta$.

\begin{theorem}\label{th:fullTheorem}
Let $(\ST,\thetaT)$ and $(S,\theta)$ be tame $F$-non-singular elliptic pairs for $\GT$ and $G$, respectively. Assume that $\eta(S) \subset \ST$ and $\theta = \thetaT\circ\eta$. Then 
$$\pi_{(\ST,\thetaT)}\circ\eta \simeq \underset{\underT{c}\in \underT{C}}{\oplus} \pi_{(S,\theta)} \circ \AdT(\underT{c}),$$ where $\underT{C}$ is a set of coset representatives of $\eta(G(F))\setminus\GT(F)/\ST(F)$.
\end{theorem}

The proof of Theorem \ref{th:fullTheorem} is done in two steps. Indeed, by noting that $$\pi_{(\ST,\thetaT)}\circ\eta = \left(\Res^{\GT(F)}_{\GZ(F)}\pi_{(\ST,\thetaT)}\right)\circ \eta,$$ we first seek a decomposition formula for $\Res^{\GT(F)}_{\GZ(F)}\pi_{(\ST,\thetaT)}$.
 The results from \cite{thesisPaper} grant us such a formula, as $\GZ$ is a normal subgroup of $\GT$ that contains $[\GT,\GT]$. This is stated in Theorem \ref{th:HG}. The second step is describing the composition of a supercuspidal representation of $\GZ(F)$ with $\eta$. This is given by Theorem \ref{th:mainTheorem}. The rest of this section is organized as follows. We start by presenting Theorem \ref{th:HG} and its proof. We then provide the statement of Theorem \ref{th:mainTheorem}  and prove Theorem \ref{th:fullTheorem}. The proof of Theorem \ref{th:mainTheorem}, being lengthy and involved, is postponed to its own subsection, Section \ref{sec:proofMainTheorem}. 

\begin{theorem}\label{th:HG}
Let $(S,\theta)$ be a tame $F$-non-singular elliptic pair of $G$ and let $y$ be the vertex of $\mathcal{B}(G,F)$ associated to $S$. Let $H$ be a closed connected $F$-subgroup of $G$ that contains $[G,G]$. Set $S_{H} = S\cap H$ and $\theta_H = \theta|_{S_H}$. Then $(S_H,\theta_H)$ is a tame $F$-non-singular elliptic pair of $H$ and
$$\pi_{(S,\theta)}|_{H(F)} = \underset{d\in D}{\oplus} {^d\pi_{(S_H,\theta_H)}},$$ where $D$ is a set of coset representatives of $H(F)\setminus G(F) / S(F)$.
\end{theorem}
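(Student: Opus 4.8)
The plan is to prove Theorem~\ref{th:HG} by unfolding both sides of the J.-K.~Yu construction and tracking how the various pieces of the $G$-datum $\Psi_{(S,\theta)}$ restrict to $H$. The first step is to verify that $(S_H,\theta_H)$ is a tame $F$-non-singular elliptic pair of $H$: since $H\supseteq[G,G]$ and $H$ is connected, $S_H=S\cap H$ is a maximal torus of $H$, and because $S$ and $S_H$ share the same derived part, $S_H$ is maximally unramified and elliptic in $H$ exactly when $S$ is in $G$; the Bruhat--Tits buildings of $H$ and $G$ have the same reduced building modulo the central torus, so $[y]$ serves as the associated vertex for $S_H$ as well. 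Non-singularity of $\theta_H$ follows from that of $\theta$ because the relevant root systems $R(H,S_H)$ and $R(G,S)$ coincide (both equal the root system of the derived group), so the non-singularity condition is literally the same condition. This is the easy part and should be dispatched quickly, citing \cite[Definition 3.4.1, 3.4.2]{Kaletha:SCPackets} and \cite[Lemma 3.4.3]{Kaletha:Regular}.

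Next I would set up the corresponding $G$-datum $\Psi_{(S,\theta)}=(\vec{G},y,\vec{r},\rho,\vec{\phi})$ and observe that it restricts to an $H$-datum $\Psi_{(S_H,\theta_H)}$: the twisted Levi sequence $\vec{G}=(G^0,\dots,G^d)$ restricts to $\vec{H}=(H^0,\dots,H^d)$ with $H^i=G^i\cap H$ (these are still tamely ramified twisted Levi subgroups since they contain $[G,G]$ minus the appropriate roots), the point $y$ is unchanged, $\vec{r}$ is unchanged, and a Howe factorization $\vec{\phi}$ for $\theta$ restricts to a Howe factorization for $\theta_H$ — here I must check the genericity of $\phi^i|_{H^{i+1}(F)}$, which again holds because genericity is a condition on root data that is shared between $G$ and $H$. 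The depth-zero piece requires more care: I need that the reductive quotient $\mathcal{H}^0_y$ sits inside $\mathcal{G}^0_y$ with the same derived group, that $\mathcal{S}$ restricts to an elliptic maximal torus of $\mathcal{H}^0_y$, and that the Deligne--Lusztig representation $\pm R_{\mathcal{S},\overline{\phi^{-1}}}$ of $\mathcal{G}^0_y(\res)$ restricts to $\pm R_{\mathcal{S}_H,\overline{\phi^{-1}_H}}$ of $\mathcal{H}^0_y(\res)$ up to the sign discrepancy coming from split ranks; this is a standard fact about Deligne--Lusztig induction and groups with a common derived subgroup.

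With the datum-level restriction in hand, the heart of the argument is to show that restriction commutes, in the appropriate branching sense, with the J.-K.~Yu construction — i.e. $\pi_G(\Psi_{(S,\theta)})|_{H(F)}$ decomposes as a sum of $H(F)$-conjugates of $\pi_H(\Psi_{(S_H,\theta_H)})$. I would invoke the branching results of \cite{arXivPaper} directly here: that paper studies precisely the restriction of supercuspidal representations to subgroups containing the derived subgroup, and Theorem~2.2 of \cite{arXivPaper} is already cited in the excerpt for the torus matching $\eta(T)=\tilde T\cap\eta(G)$. The key step will be to match the compact induction data: $\pi_{(S,\theta)}=\Ind_{K^d}^G\kappa_{(S,\theta)}$ and $\pi_{(S_H,\theta_H)}=\Ind_{K^d_H}^H\kappa_{(S_H,\theta_H)}$ where $K^d_H=K^d\cap H$, then apply Mackey theory to $\Ind_{K^d}^G\kappa\big|_H=\bigoplus_{g\in H\backslash G/K^d}\Ind_{K^d_H}^{{}^gH\cap\dots}$, and verify that $\kappa_{(S,\theta)}|_{K^d_H}$, after the inflation/extension steps, equals $\kappa_{(S_H,\theta_H)}$ — which reduces to the compatibility of Heisenberg--Weil representations and the depth-zero extension with restriction. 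The double coset set then further refines into the two-parameter set indexed by $T$ (for $H(F)\backslash G(F)/K^d$) and $L$ (for the depth-zero induction $\Ind_{S(F)G^0(F)_{y,0}}^{G^0(F)_{[y]}}$, which is where $H^0(F)_{[y]}\backslash G^0(F)_{[y]}/S(F)G^0(F)_{y,0}$ enters).

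The main obstacle I anticipate is the bookkeeping around the depth-zero part $\rho$ and the extension/inflation steps of Figure~\ref{fig:summaryJKYu}: one must show that the Heisenberg--Weil factors $\kappa^i$ for $i\geq 0$ restrict cleanly (these involve Weil representations of symplectic spaces $\g^{i+1}/\g^i$ at $y$, which are intrinsic to the root data and hence insensitive to passing between $G$ and $H$, so they should restrict without multiplicity), whereas the factor $\kappa^{-1}$ built from the Deligne--Lusztig representation genuinely can become reducible upon restriction — this is exactly the source of the multiplicity-free sum over $L$ and is the place where the sign $(-1)^{r_\res(\mathcal{G}^0_y)-r_\res(\mathcal{S})}$ versus $(-1)^{r_\res(\mathcal{H}^0_y)-r_\res(\mathcal{S}_H)}$ must be reconciled, using that the two split-rank differences agree. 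I expect this entire theorem is an application of \cite{arXivPaper} specialized to the J.-K.~Yu/Kaletha setting, so the proof will largely consist of checking that Kaletha's construction outputs data to which the \cite{arXivPaper} machinery applies, rather than re-deriving the branching from scratch; the write-up should therefore emphasize the datum-level compatibility and then cite \cite{arXivPaper} for the decomposition.
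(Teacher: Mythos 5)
Your proposal matches the paper's proof in both structure and substance: the paper likewise restricts the $G$-datum to an $H$-datum (citing the restriction-of-data results of \cite{arXivPaper}, namely Theorems 2.3, 4.1, 5.7, 5.8, Proposition 4.7 and Lemma 7.1), obtains the sum over $T$ from the double cosets $H(F)\backslash G(F)/K^d$, and then compares $\rho|_{H^0(F)_{[y]}}$ with the depth-zero part built from $(S_H,\theta_H)$ via \cite[Proposition 7.5]{arXivPaper}, which is exactly your identified source of the sum over $L$. So the approach is essentially identical, with your write-up merely spelling out some of the Mackey/Deligne--Lusztig bookkeeping that the paper delegates to citations.
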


We note that the results of \cite{thesisPaper} express the decomposition formula of $\pi_{(S,\theta)}|_{H(F)}$ in terms of a double sum over two sets of coset representatives. It turns out that one can combine these two sets to form a new set of coset representatives over a single quotient. This combination of coset representatives is discussed in Appendix \ref{sec:appendix}. We also make use of the following lemma.

\begin{lemma}\label{lem:redundancyParahoric}
Let $G$ be a reductive $F$-group, and $H$ be an $F$-subgroup that contains $[G,G]$. Let $S$ be a maximally unramified elliptic maximal torus of $G$, and let $S_H = S\cap H$, so that $S_H$ is a maximally unramified elliptic maximal torus of $H$. Denote by $y$ the point of the reduced building associated to $S$
 (and $S_H$) via \cite[Lemma 3.4.3]{Kaletha:Regular}. Let $G^0$ and $G^0_H$ denote the first Levi subgroups of the Levi sequences obtained from $S$ and $S_H$, respectively, and recall that $G^0_H = G^0\cap H$. Then
 $$G^0(F)_{y,0} = S(F)_0\, H^0(F)_{y,0}.$$
 \end{lemma}
 
\begin{proof}
By \cite[Definition 3.4.2]{Kaletha:Regular}, we have that $S$ is the centralizer of a maximal $F^{\un}$-split torus of $G$. Similarly, $S_H$ is the centralizer of a maximal $F^{\un}$-split torus of $H$. By definition (e.g. \cite[pp.35-36]{thesis} or \cite[Definition 13.2.1]{KalethaPrasad}),
\begin{align}\label{eq:parahoric} G^0(F^{\un})_{y,0} &=\langle S(F^{\un})_0, U_\alpha(F^{\un}):\alpha\in \Phi^{\mathrm{aff}}_{F^{\un}}, \langle \alpha,y \rangle \geq 0  \rangle, 
\end{align}
where $\Phi^{\mathrm{aff}}_{F^\un}$ is a set of affine roots and $U_\varphi(F^\un)$ is an associated affine root subgroup. The affine root subgroups are normalized by $S(F^\un)_0$, allowing us to write
\begin{align*}
G^0(F^\un)_{y,0} &= S(F^{\un})_0\, \langle S_H(F^{\un})_0, U_\alpha(F^{\un}):\alpha\in \Phi^{\mathrm{aff}}_{F^{\un}}, \langle \alpha,y \rangle \geq 0  \rangle \nonumber \\
&= S(F^{\un})_0\,H^0(F^{\un})_{y,0},\nonumber
\end{align*} 
and 
$$G^0(F)_{y,0} = (G^0(F^{\un})_{y,0})^{\Fr} = (S(F^{\un})_0\,H^0(F^{\un})_{y,0})^{\Fr}.$$ Using \cite[Lemma 3.4.6]{Kaletha:Regular} and the definition of $S_H$, we have that $S(F^{\un})_0\cap H^0(F^{\un})_{y,0} = S_H(F^{\un})_0$. Furthermore, $H^1(\Fr,S_H(F^{\un})_0)$ is trivial (see for instance the proof of \cite[Lemma 3.4.10]{Kaletha:Regular}). It follows from the usual sequence of Galois cohomology \cite[Proposition 12.3.4]{Springer:LAG} that 
$$(S(F^{\un})_0\,H^0(F^{\un})_{y,0})^{\Fr} = S(F^{\un})^{\Fr}_0\,(H^0(F^{\un})_{y,0})^{\Fr} = S(F)_0\,H^0(F)_{y,0},$$ and therefore
$$G^0(F)_{y,0} = S(F)_0\,H^0(F)_{y,0}.$$
 \end{proof}

We now have all the tools to prove Theorem \ref{th:HG}.

\begin{proof}[Proof of  Theorem \ref{th:HG}]
Let $\Psi_{(S,\theta)} = (\vec{G},y,\vec{r},\rho,\vec{\phi})$ be the $G$-datum obtained from the pair $(S,\theta)$ as in Section~\ref{sec:summaryKaletha}. Recall that we may write $\pi_{G}(\Psi_{(S,\theta)})$ for $\pi_{(S,\theta)}$ and $\kappa_{G}(\Psi_{(S,\theta)})$ for $\kappa_{(S,\theta)}$ to indicate that we are applying the J.-K.~Yu construction to $\Psi_{(S,\theta)}$. Set $K^i_H = K^i\cap H$ for all $0\leq i\leq d$ and $\Psi^{H}_{(S,\theta)} = (\vec{H},y,\vec{\tilde{r}}, \rho|_{K^0_H},\vec{\phi}_{H})$, where $\vec{H}, \vec{\tilde{r}}$ and $\vec{\phi}_H$ are as per \cite[Theorem 4.1]{thesisPaper}. Then, it follows from \cite[Theorems 5.7 and 5.8]{thesisPaper} that
$$\pi_{(S,\theta)}|_{H(F)} = \pi_G(\Psi_{(S,\theta)})|_{H(F)} \simeq \underset{l\in L}{\oplus} {^l\pi_H(\Psi^H_{(S,\theta)})},$$ where $L$ is a set of coset representatives of $H(F)\setminus G(F) / K^d$. It remains to compare $\Psi^H_{(S,\theta)}$ and $\Psi_{(S_H,\theta_H)}$. We have that $\vec{H}$ is the twisted Levi sequence associated to $S_H$ by \cite[Theorem 2.3]{thesisPaper} and the discussion preceding it, and the point $y$ is the vertex of $\mathcal{B}(H,F)$ associated to $S_H$ by \cite[Lemma 7.1]{thesisPaper}. The character sequence $\vec{\phi}_H$ clearly satisfies the first two axioms to be a Howe factorization of $\theta_H$, and genericity is given by \cite[Proposition 4.7]{thesisPaper}. Therefore, assembling these pieces along with the construction from Figure~\ref{fig:summaryRho}, we have $\Psi_{(S_H,\theta_H)} = (\vec{H},y,\vec{\tilde{r}},\Ind_{S_H(F)H^0(F)_{y,0}}^{K^0_H}\upkappa_{(S_H,\theta_H)},\vec{\phi}_H)$. Following the steps in the proof of \cite[Proposition 7.5]{thesisPaper}, we have
$$\rho|_{K^0_H} \simeq \underset{\ell\in \mathcal{L}}{\oplus} {^\ell \Ind_{S_H(F)H^0(F)_{y,0}}^{K^0_H}\upkappa_{(S_H,\theta_H)}},$$ where $\mathcal{L}$ is a set of coset representatives of $K^0_H\setminus K^0 / S(F)G^0(F)_{y,0}$. Therefore, we conclude that $$\pi_H(\Psi^H_{(S,\theta)}) \simeq \underset{\ell\in \mathcal{L}}{\oplus} {^\ell \pi_H(\Psi_{(S_H,\theta_H)})} = \underset{\ell\in \mathcal{L}}{\oplus}{^\ell\pi_{(S_H,\theta_H)}},$$ which implies
$$\pi_{(S,\theta)}|_{H(F)} \simeq \underset{l\in L}{\oplus}\underset{\ell \in \mathcal{L}}{\oplus} {^{l\ell}\pi_{(S_H,\theta_H)}}.$$

Using Lemma \ref{lem:redundancyParahoric}, one rewrites $\mathcal{L}$ as $K^0_H \setminus K^0 /S(F)$. We claim that $L\mathcal{L} = \{l\ell : l\in L,\ell\in \mathcal{L}\}$ is a set of coset representatives of $H(F)\setminus G(F) / S(F)$, which we denote by $D$, allowing us to write $$\pi_{(S,\theta)} \simeq \underset{d\in D}{\oplus} {^d\pi_{(S_H,\theta_H)}}.$$

To prove this last claim, we set $A = H(F),B=G(F),C=K^d,\bar{A}=K^0_H,\bar{B}=K^0$ and $\bar{C} = S(F)$, and show that $A,B,C,\bar{A},\bar{B},\bar{C}$ satisfy the hypotheses of part 1) of Lemma \ref{lem:combineCosets}. It is clear that $A$ and $\bar{A}$ are normal subgroups of $B$ and $\bar{B}$, respectively, and that $\bar{B}\subseteq C$ and $A\cap \bar{B} = \bar{A}$. It remains to show that $\bigslant{C}{(A\cap C)\bar{C}} \simeq \bigslant{\bar{B}}{\bar{A}\bar{C}}.$

Setting $J_H= H^1(F)_{y,r_0/2}\cdots H^d(F)_{y,r_{d-1}/2}$, we have $K^d_H = K^0_HJ_H$ by definition, and ${K^d = K^0J_H}$ as per \cite[Proof of Proposition 5.1]{thesisPaper}. It follows that 
\begin{align*}
\bigslant{C}{(A\cap C)\bar{C}} &= \bigslant{K^d}{K^d_HS(F)} = \bigslant{K^0J_H}{K^0_HJ_HS(F)}. 
\end{align*}
Given that $S(F)$ is in the stabilizer of $y$, we have that $^sH^i(F)_{y,r} = H^i(F)_{s\cdot y, r} = H^i(F)_{y,r}$ for all $s\in S(F), r\geq 0, 0\leq i\leq d$. Therefore, $S(F)$ normalizes $J_H$ and $J_HS(F) = S(F)J_H$. This, in combination with our modified third isomorphism theorem (Lemma \ref{lem:modifiedisothm}) allows us to obtain
$$\bigslant{C}{(A\cap C)\bar{C}} \simeq \bigslant{K^0}{K^0_HS(F)(K^0\cap J_H)} = \bigslant{K^0}{K^0_HS(F)},$$
where the last equality follows from the fact that $K^0\cap J_H \subset K^0_H$. Thus, ${\bigslant{C}{(A\cap C)\bar{C}}\simeq \bigslant{\bar{B}}{\bar{A}\bar{C}}}$.
\end{proof}

\begin{theorem}\label{th:mainTheorem}
Let $(S,\theta)$ and $(\SZ,\thetaZ)$ be tame $F$-non-singular elliptic pairs of $G$ and $\GZ$, respectively. Assume that $\eta(S) = \SZ$ and $\theta = \thetaZ\circ\eta$. Then
$$\pi_{(\SZ,\thetaZ)}\circ \eta \simeq \underset{\underZ{d}\in \underZ{D}}{\oplus} \pi_{(S,\theta)} \circ \AdT(\underZ{d}),$$ where $\underZ{D}$ is a set of coset representatives of $\eta(G(F))\setminus \GZ(F)/\SZ(F)$.
\end{theorem}

The strategy that we use to prove Theorem~\ref{th:mainTheorem} is similar to the one from \cite{thesisPaper} in the sense that we go through the constructions of $\pi_{(S,\theta)}$ and $\pi_{(\SZ,\thetaZ)}$ step-by-step to make comparisons along the way. It is a lengthy process, so we dedicate Section~\ref{sec:proofMainTheorem} to proving this statement. The comparison between the depth-zero parts is particularly delicate. With the statements of Theorems \ref{th:HG} and \ref{th:mainTheorem} in hand, we provide the proof of Theorem~\ref{th:fullTheorem}.

{\begin{proof}[Proof of Theorem~\ref{th:fullTheorem}]
Setting $G= \GT$ and $H=\GZ$ in Theorem \ref{th:HG}, we have
\begin{align*}
\pi_{(\ST,\thetaT)}\circ\eta &= \left(\Res^{\GT(F)}_{\GZ(F)}\pi_{(\ST,\thetaT)}\right) \circ\eta \\
&\simeq \left( \underset{\underT{c}\in \underT{C}}{\oplus} {^{\underT{c}}\pi_{(\SZ,\thetaZ)}} \right) \circ\eta,
\end{align*}
where $\SZ = \ST\cap \GZ, \thetaZ = \theta|_{\SZ}$ and $\underT{C}$ is a set of coset representatives of $\GZ(F)\setminus \GT(F)/\ST(F)$. 

By Lemma \ref{lem:conjThrough}, $^{\underT{c}}\pi_{(\SZ,\thetaZ)} \circ \eta = \pi_{(\SZ,\thetaZ)}\circ\eta\circ\AdT(\underT{c}^{-1})$. 
Using this last equality, and applying Theorem \ref{th:mainTheorem}, it follows that
\begin{align*}
\pi_{(\ST,\thetaT)}\circ\eta &\simeq \underset{\underT{c}\in \underT{C}}{\oplus} (\pi_{(\SZ,\thetaZ)}\circ \eta) \circ \AdT(\underT{c}^{-1})\\
&\simeq \underset{\underT{c}\in \underT{C}}{\oplus}  \left( \underset{\underZ{d}\in \underZ{D}}{\oplus}  \pi_{(S,\theta)}\circ \AdT(\underZ{d}) \right)\circ \AdT(\underT{c}^{-1}) \\
&= \underset{\underT{c}\in \underT{C}}{\oplus} \underset{\underZ{d}\in \underZ{D}}{\oplus}  \pi_{(S,\theta)}\circ \AdT(\underZ{d}\,\underT{c}^{-1})
\end{align*}
where $\underZ{D}$ is a set of coset representatives of $\eta(G(F))\setminus \GZ(F) /\SZ(F)$. Setting $A=\GZ(F), B=\GT(F), C = \ST(F), \bar{A} = \eta(G(F)), \bar{B} = \GZ(F), \bar{C} = \SZ(F)$, one sees from part 2) of Lemma \ref{lem:combineCosets} that $\{\underZ{d}\,\underT{c}^{-1} : \underT{c}\in \underT{C}, \underZ{d}\in \underZ{D}\}$ is a set of coset representatives of $\eta(G(F))\setminus \GT(F) /\ST(F)$. The decomposition formula thus follows.
\end{proof}}

\subsubsection{The Proof of Theorem~\ref{th:mainTheorem}}\label{sec:proofMainTheorem}

As previously mentioned, we will prove Theorem \ref{th:mainTheorem} by going through the constructions of $\pi_{(S,\theta)}$ and $\pi_{(\SZ,\thetaZ)}$ step-by-step. Given that the first step of the constructions is inducing J.-K.~Yu data from $(S,\theta)$ and $(\SZ,\thetaZ)$, we begin by establishing a relationship between both data.

\subsubsubsection{Matching the J.-K.~Yu Data}\label{sec:matchingInducedData}

In this section, $G, \GZ = \eta(G) \simeq G/Z$ and $\eta$ are as in the introduction. We let $(S,\theta)$ and $(\SZ,\thetaZ)$ be tame $F$-non-singular elliptic pairs of $G$ and $\GZ$, respectively, which satisfy $\eta(S) = \SZ$ and $\theta = \thetaZ\circ\eta$. The goal of this section is to show that the corresponding J.-K.~Yu data are also related via the map $\eta$, a statement we will make precise with Theorem~\ref{th:matchingData} and illustrate in Figure~\ref{fig:summaryJKData} below. First, we note that $\eta:G\rightarrow \GZ$ induces an equivariant isomorphism $\eta_\mathcal{B}: \mathcal{B}(G,F) \rightarrow \mathcal{B}(\GZ,F)$ by \cite[Axiom 4.1.1]{KalethaPrasad}. In particular, this map satisfies $\eta_{\mathcal{B}}(gx) = \eta(g)\cdot\eta_\mathcal{B}(x)$ for all $g\in G(F), x\in \mathcal{B}(G,F)$, where $\cdot$ refers to the action of $G(F)$ on $\mathcal{B}(G,F)$.

\begin{theorem}\label{th:matchingData}
Let $(S,\theta)$ and $(\SZ,\thetaZ)$ be tame $F$-non-singular elliptic pairs of $G$ and $\GZ$, respectively, such that $\eta(S) = \SZ$ and $\theta = \thetaZ\circ\eta$. Let $(\vec{G},y,\vec{r},\rho,\vec{\phi})$ and $(\vec{\GZ},\yZ,\vec{\underZ{r}},\rhoZ,\vec{\phiZ})$ be the corresponding J.-K.~Yu data as described in Section~\ref{sec:summaryKaletha}. Then, $\vec{r} = \vec{\underZ{r}}$, $\eta(\vec{G}) = \vec{\GZ}$, $\yZ = \eta_{\mathcal{B}}(y)$, $\vec{\phi} = \vec{\phiZ}\circ\eta$ and $\rhoZ\circ\eta \simeq \underset{\underZ{c}\in \underZ{C}}{\oplus}\rho\circ \AdT(\underZ{c}),$ where $\underZ{C}$ is a set of coset representatives of $\eta(K^0) \setminus \KZ{0}/\SZ(F)$.
\end{theorem}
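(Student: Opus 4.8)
The plan is to establish the five assertions one at a time, following the unfolding recipe from Section~\ref{sec:summaryKaletha} in the order it is presented, and exploiting at each stage the compatibility between $G$ and $\GZ$ recorded in the introduction (canonical identification of root systems $R(G,S)\cong R(\GZ,\SZ)$, matching coroots, matching Weyl groups, and $\theta=\thetaZ\circ\eta$). First I would treat $\vec r=\vec r_Z$ and $\eta(\vec G)=\vec\GZ$ together, since both come from the same filtration $R_r$: because $\eta$ carries the coroot $\check\alpha$ of $G$ to the corresponding coroot of $\GZ$ and the norm map $N_{E/F}$ is compatible with $\eta$, the condition $\theta(N_{E/F}(\check\alpha(E_r^\times)))=1$ is literally the same as $\thetaZ(N_{E/F}(\check\alpha(E_r^\times)))=1$; hence the sets $R_r$ and their breaks $r_{d-1}>\cdots>r_0$ coincide, so $\vec r=\vec r_Z$, and $G^i=\langle S,U_\alpha:\alpha\in R_{r_{i-1}^+}\rangle$ maps under $\eta$ onto $\langle\SZ,U_\alpha:\alpha\in R_{r_{i-1}^+}\rangle=\GZ^i$ (using $\eta(U_\alpha)=U_\alpha$ under the root-system identification), giving $\eta(\vec G)=\vec\GZ$. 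The depth $r_d=\depth(\theta)=\depth(\thetaZ)$ matches because $\theta$ and $\thetaZ$ are intertwined by $\eta$ and $\eta(S)=\SZ$.

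Next I would handle $\yZ=\eta\circ y$. By \cite[Lemma 3.4.3]{Kaletha:Regular}, $[y]$ is the unique $\Gal(F^\un/F)$-fixed point of $\mathcal A^{\mathrm{red}}(G,S,F^\un)$, and similarly $[\yZ]$ for $(\GZ,\SZ)$. The map $\eta:G\to\GZ$ is an isogeny onto its image with central kernel $Z\subset Z(G)$, so it induces an isomorphism of the reduced buildings $\mathcal B^{\mathrm{red}}(G,F)\to\mathcal B^{\mathrm{red}}(\GZ,F)$ (the derived groups coincide, and the building of the reduced group only sees the derived part), equivariant for the Galois action and carrying $\mathcal A^{\mathrm{red}}(G,S,F^\un)$ to $\mathcal A^{\mathrm{red}}(\GZ,\SZ,F^\un)$; by uniqueness of the Galois-fixed vertex this forces the image of $[y]$ to be $[\yZ]$, i.e. $\yZ=\eta\circ y$ at the level of the chosen representatives. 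I would phrase this using the functoriality of Bruhat--Tits theory for central isogenies rather than reproving it.

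For $\vec\phi=\vec\phiZ\circ\eta$ I would argue that a Howe factorization of $\thetaZ$ pulls back to one of $\theta$. Given the factorization $\thetaZ=\prod_{i=-1}^d\repZ i|_{\SZ(F)}$ with the genericity and triviality properties of \cite[Proposition 3.6.7]{Kaletha:Regular}, set $\rep i\defeq\repZ i\circ\eta$ on $G^i(F)$. Then $\prod\rep i|_{S(F)}=(\prod\repZ i|_{\SZ(F)})\circ\eta=\thetaZ\circ\eta=\theta$; triviality of $\rep i$ on $G^i_{\mathrm{sc}}(F)$ follows because $\eta$ maps $G^i_{\mathrm{sc}}$ isomorphically onto $\GZ^i_{\mathrm{sc}}$ (the simply connected covers agree); and $G^{i+1}(F)$-genericity of depth $r_i$ is preserved because genericity in the sense of \cite[Definition 3.9]{HM:2008} is a condition on the restriction of (the dual blob of) $\rep i$ to a filtration quotient of a torus, and these quotients are identified under $\eta$ (this is the same mechanism used for \cite[Proposition 4.7]{arXivPaper} cited in the proof of Theorem~\ref{th:HG}). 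Since $\pi_{(S,\theta)}$ is independent of the choice of Howe factorization, this choice is harmless.

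The genuinely delicate point, and the one I would spend the most care on, is $\rho\subset\rhoZ\circ\eta$; here the inclusion (rather than equality) is expected, mirroring the phenomenon in Theorem~\ref{th:mainTheorem}. I would unwind both $\rho$ and $\rhoZ$ through Figure~\ref{fig:summaryRho}. Passing to the reductive quotients at $y$ and $\yZ$: since $\eta$ is a central isogeny and $y\mapsto\yZ$, the induced map $\mathcal G^0_y\to\mathcal G^0_{Z,\yZ}$ of reductive $\res$-groups is again a central isogeny carrying the elliptic maximal torus $\mathcal S$ onto $\mathcal S_Z$, and $\overline{\rep{-1}}=\overline{\repZ{-1}}\circ(\text{this map})$ by the factoring property of \cite[Lemma 3.4.14]{Kaletha:Regular}. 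Deligne--Lusztig induction is compatible with such isogenies, so $\pm R_{\mathcal S,\overline{\rep{-1}}}$ is (a subrepresentation of) the pullback of $\pm R_{\mathcal S_Z,\overline{\repZ{-1}}}$ — here a containment can genuinely appear because restriction along $\mathcal G^0_{Z,\yZ}(\res)\to\mathcal G^0_y(\res)$ of an irreducible constituent need not stay irreducible, and the $\res$-split-rank signs agree since the isogeny preserves split ranks. Pulling back and extending to $S(F)G^0(F)_{y,0}$ is compatible with $\eta$ by the uniqueness of the extension, and then I would invoke Proposition~\ref{prop:induction} with $\mu=\eta$, $H_2'=G^0(F)_{[y]}$, $H_2=\GZ^0(F)_{[\yZ]}$, $H_1'=S(F)G^0(F)_{y,0}$, $H_1=\SZ(F)\GZ^0(F)_{\yZ,0}$: one checks $\eta(G^0(F)_{[y]})\cap\SZ(F)\GZ^0(F)_{\yZ,0}=\eta(S(F)G^0(F)_{y,0})$ (using $\eta(S(F))=\SZ(F)$ up to the $H^1(F,Z)$ discrepancy, which lands outside $\SZ(F)$, and compatibility of Moy--Prasad filtrations with $\eta$) and $\ker\eta=Z(F)\subset S(F)G^0(F)_{y,0}$, to conclude $\rho=\Ind(\ldots)\subset(\Ind(\ldots))\circ\eta=\rhoZ\circ\eta$. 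The two sources of non-surjectivity — the Deligne--Lusztig restriction and the index of $\eta(G^0(F)_{[y]})$ in $\GZ^0(F)_{[\yZ]}$ — are exactly what make this an inclusion and not an equality, and I would flag that explicitly.
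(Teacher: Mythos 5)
Your treatment of the first four pieces of data ($\vec r$, $\eta(\vec G)=\vec\GZ$, $\yZ=\eta\circ y$, $\vec\phi=\vec\phiZ\circ\eta$) is essentially the paper's own argument: canonical identification of root systems and coroots gives the Levi sequences and depths (Lemma~\ref{lem:LeviSeq}); the uniqueness of the $\Gal(F^\un/F)$-fixed point in the reduced apartment gives the vertex (Lemma~\ref{lem:yyZ}); and pulling back a Howe factorization along $\eta$ preserves the axioms, with genericity checked by transporting the realizing element $X_Z^*$ through the dual map $d\eta^*$ (Proposition~\ref{prop:HoweFact}). Your Howe-factorization step is stated at a higher level than the paper's, which actually works with the mock exponential maps and the generic element $X^*=d\eta^*(X_Z^*)$, but the substance is the same.

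The error is in the $\rho\subset\rhoZ\circ\eta$ step, and it is not cosmetic. You model the induced map on reductive quotients $\mathcal G^0_y\to\mathcal G^0_{Z,\yZ}$ as a surjective central isogeny ``carrying $\mathcal S$ onto $\mathcal S_Z$.'' This is false in general. First, $\eta:G\to\GZ=G/Z$ need not be an isogeny at all, since $Z$ may be positive-dimensional (e.g.\ $\GL_n\to\PGL_n$). Second, and more to the point, Remark~\ref{rem:filtrations} records that at depth zero one only has an \emph{inclusion} $\eta(G^0(F)_{y,0})\subset\GZ^0(F)_{\yZ,0}$, so the induced map on $\res$-points of reductive quotients is an \emph{embedding onto a subgroup} $\mathcal H_{\yZ}(\res)\subset\mathcal G^0_{\yZ}(\res)$ containing $[\mathcal G^0_{\yZ},\mathcal G^0_{\yZ}](\res)$ (this is Lemma~\ref{lem:redQuotients}), not a surjection. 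Accordingly $\mathcal S$ maps isomorphically onto a subtorus $\tilde{\mathcal S}\subsetneq\mathcal S_Z$ in general. The correct mechanism is therefore not ``pullback of Deligne--Lusztig characters along an isogeny'' but ``restriction of $R_{\mathcal S_Z,\overline{\repZ{-1}}}$ to the subgroup $\mathcal H_{\yZ}(\res)$,'' which is handled by \cite[Theorem A.2]{arXivPaper}, and this yields an \emph{equality} $\pm R_{\mathcal S,\overline{\rep{-1}}}=\pm R_{\mathcal S_Z,\overline{\repZ{-1}}}|_{\mathcal H_{\yZ}(\res)}\circ\eta$, hence $\upkappa_{(S,\rep{-1})}=\upkappa_{(\SZ,\repZ{-1})}\circ\eta$ by uniqueness of pullback and extension. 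The strict containment $\rho\subset\rhoZ\circ\eta$ then comes entirely from the final application of Proposition~\ref{prop:induction}, not from any reducibility at the Deligne--Lusztig stage as you suggest. Your invocation of Proposition~\ref{prop:induction} at the end is right, but it needs to be fed the correct equality at the $\upkappa$ level, and your heuristic that ``restriction of an irreducible constituent need not stay irreducible'' both misstates what $R_{\mathcal S,\cdot}$ is (a virtual character, not an irreducible being restricted) and points at a source of reducibility that the paper shows does not occur here.
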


\begin{figure}[!htbp]
\begin{center}
\begin{tikzcd}[column sep = 0]
{}  &{(\SZ,} &{} &{\thetaZ)}\arrow[bend right = 20, swap]{rrrrrrr}{\circ \eta} &{}  &{} &{} &{} &{(S,}\arrow[bend right = 20, swap]{lllllll}{\eta}  &{}&{\theta)} &{}\\[-30pt]
{} &{} &{}\arrow[squiggly]{d} &{} &{} &{} &{} &{} &{} &{}\arrow[squiggly]{d} &{} &{} \\
{} &{} &{} &{} &{} &{} &{} &{} &{} &{} &{}\\[-20pt]
{({\vec{\GZ}},} &{{\yZ},} &{{\vec{r}}},&{{\rhoZ}}\arrow[dashed,bend right = 20, swap]{rrrrrrr}{\circ \eta} &{\vec{\phiZ})}\arrow[bend right = 20, swap]{rrrrrrr}{\circ \eta} &{} &{} & {(\vec{G},} \arrow[bend right = 20, swap]{lllllll}{\eta} &{y,}\arrow[bend right = 20, swap]{lllllll}{\eta_\mathcal{B}}  &{\vec{r},} &{\rho,} &{\vec{\phi})}
\end{tikzcd}
\end{center}
\caption{Relationship between the corresponding J.-K.~Yu datum given the relationship between the tame $F$-non-singular elliptic pairs.}
\label{fig:summaryJKData}
\end{figure}

The proof of this theorem will be divided into four parts. Lemma~\ref{lem:LeviSeq} shows that ${\eta(\vec{G}) = \vec{\GZ}}$ and $\vec{r} = \vec{\underZ{r}}$. Lemma~\ref{lem:yyZ} gives us $\yZ = \eta_\mathcal{B}(y)$. Proposition~\ref{prop:HoweFact} allows us to set $\vec{\phi} = \vec{\phiZ}\circ\eta$. Finally, we obtain the decomposition formula of $\rhoZ\circ\eta$ from Proposition~\ref{prop:rhoeta}.

\begin{lemma}\label{lem:LeviSeq}
Let $\vec{G} = (G^0,\dots, G^d)$ and $\vec{\GZ} = (\GZ^0,\dots,\GZ^{\underZ{d}})$ be the twisted Levi sequences obtained from $S$ and $\SZ$, respectively, as per Section~\ref{sec:summaryKaletha}. Then $d = \underZ{d}$ and $\eta(G^i) = \GZ^i$ for all $0\leq i\leq d$.
\end{lemma}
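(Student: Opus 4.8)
The plan is to unwind the definition of the twisted Levi sequences on both sides and track what happens under $\eta$. Recall from Section~\ref{sec:summaryKaletha} that $G^i = \langle S, U_\alpha : \alpha \in R_{r_{i-1}^+}\rangle$, where the filtration subsets $R_r = \{\alpha \in R(G,S) : \theta(N_{E/F}(\check{\alpha}(E_r^\times))) = 1\}$ are defined in terms of $\theta$, the coroots $\check\alpha$, and the norm map. The corresponding data for $\GZ$ uses $\SZ = \eta(S)$, $\thetaZ$, and the root system $R(\GZ, \SZ)$. The first step is to invoke the canonical identification of the root systems: since $\eta: G \to \GZ$ has central kernel and $\SZ = \eta(S)$, the map $\eta$ induces an isomorphism $R(\GZ, \SZ) \xrightarrow{\sim} R(G,S)$ (this is the kind of statement established in \cite{arXivPaper}, cf.\ the notation discussion in the introduction where $R(G,T)$ and $R(\GT,\tilde T)$ are canonically identified and the Weyl groups coincide). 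Under this identification, $\eta$ carries the root subgroup $U_\alpha \subset G$ isomorphically onto the root subgroup of $\GZ$ attached to the corresponding root, because $\eta$ is an isomorphism on derived groups (equivalently, on unipotent parts).

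Next I would show the filtrations $R_r$ agree under this identification. The key point is that coroots behave well: $\eta \circ \check\alpha$ is the coroot of $\SZ$ attached to the image of $\alpha$, since coroots land in the derived group on which $\eta$ is an isomorphism. Because $\theta = \thetaZ \circ \eta$, we get $\theta(N_{E/F}(\check\alpha(E_r^\times))) = \thetaZ(\eta(N_{E/F}(\check\alpha(E_r^\times)))) = \thetaZ(N_{E/F}((\eta\circ\check\alpha)(E_r^\times)))$, using that $\eta$ commutes with the norm $N_{E/F}$ (it is defined over $F$). Hence $\alpha \in R_r$ for $G$ if and only if the corresponding root lies in $R_r$ for $\GZ$. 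This forces the breaks $r_{d-1} > \cdots > r_0$ to coincide on both sides, so $\vec r = \vec r_Z$ and in particular $d = d_Z$; and it forces $R_{r_{i-1}^+}$ to match for each $i$.

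Finally, combining the two previous steps: $\eta(G^i) = \eta\langle S, U_\alpha : \alpha \in R_{r_{i-1}^+}\rangle = \langle \eta(S), \eta(U_\alpha) : \alpha \in R_{r_{i-1}^+}\rangle = \langle \SZ, U_\beta : \beta \in (R_Z)_{r_{i-1}^+}\rangle = \GZ^i$, where I use that $\eta$ is a homomorphism of algebraic groups (so it commutes with forming the subgroup generated by a collection of subgroups, at least after taking the identity component / closure, which is automatic here since these are connected reductive groups), that $\eta(S) = \SZ$, that $\eta(U_\alpha)$ is the matching root subgroup, and that the indexing sets agree. One should be slightly careful that $\eta(G^i)$ is genuinely an algebraic subgroup and not just the image set — but since $\eta$ has central kernel inside $S \subset G^i$, the image $\eta(G^i)$ is a closed connected reductive subgroup of $\GZ$ containing $\SZ$ and the relevant root subgroups, which pins it down.

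The main obstacle I expect is the bookkeeping around the canonical identification of root systems and root subgroups under $\eta$, together with the compatibility of coroots and the norm map with $\eta$; none of this is deep, but it requires citing the right structural lemmas from \cite{arXivPaper} (the analogues of \cite[Theorems 2.2 and 2.3]{arXivPaper} and the discussion preceding them) and being precise about the fact that $\eta$ restricts to an isomorphism on derived subgroups, which is where coroots and root subgroups live. Once that identification is in place, the equality of filtrations and of the twisted Levi sequences is essentially formal.
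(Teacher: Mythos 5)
Your proposal follows the same route as the paper, which disposes of this lemma with a two-sentence remark: the root systems $R(G,S)$ and $R(\GZ,\SZ)$ are canonically identified, the filtration sets $R_r$ (hence the breaks $\vec r$) coincide because $\theta=\thetaZ\circ\eta$ and coroots/norms are compatible with $\eta$, and then $\eta(G^i)=\GZ^i$ follows by comparing generators. Your write-up is essentially a fleshed-out version of exactly that argument, so in substance it is correct and matches the paper.

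One justification you give is wrong as stated, though the conclusions you draw from it survive: you assert that $\eta$ is an isomorphism on derived groups. Since $Z=\ker\eta$ is only assumed central, it may well meet $[G,G]$ (e.g.\ $\SL_n\rightarrow\PGL_n$), so $\eta|_{[G,G]}$ is in general a nontrivial central isogeny, not an isomorphism. The correct reason your two claims hold is more elementary: $Z$ consists of semisimple elements, so $Z\cap U_\alpha=\{1\}$ and the surjection $\eta$ restricts to an isomorphism of each root subgroup onto the corresponding root subgroup of $\GZ$; and $\eta\circ\check\alpha$ is the coroot of the corresponding root of $(\GZ,\SZ)$ because a central quotient induces an identification of root data (roots and coroots correspond under $\eta$), not because $\eta$ is injective on $[G,G]$. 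With that repair, the rest of your argument — equality of the filtrations $R_r$, hence of the breaks and of $d$, and the generator-by-generator computation of $\eta(G^i)$ — goes through as written.
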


The previous lemma easily follows from the fact that the root systems $R(G,S)$ and $R(\GZ,\SZ)$ are canonically identified. Furthermore, the induced sequence of numbers $\vec{r}$ is the same for both $S$ and $\SZ$. 

\begin{lemma}\label{lem:intersectGi}
For all $0\leq i\leq d$, $\GZ^i(F)\cap \eta(G(F)) = \eta(G^i(F)).$
\end{lemma}

\begin{proof}
Recall that $\GZ^i \simeq G^i/Z$. Given $\underZ{g}\in \GZ^i(F)$, we may think of $\underZ{g}$ as an element of the form $gZ$, where $g\in G^i$ is such that $\sigma(g)Z = gZ$ for all $\sigma\in\Gamma$. Assume that $\underZ{g}\in \eta(G(F))$ (so that $\underZ{g}\in \GZ^i(F)\cap \eta(G(F))$). Then $\underZ{g} = hZ$ for some $h\in G(F)$. It follows that $h=gz$ for some $z\in Z\subseteq Z(G) \subseteq G^i$. Therefore, $h\in G^i\cap G(F) = G^i(F),$ and thus $\underZ{g} = hZ \in \eta(G^i(F))$.
\end{proof}

\begin{lemma}\label{lem:yyZ}
Let $y$ be the vertex of $\mathcal{B}(G,F)$ associated to $S$ and $\yZ$ be the vertex of $\mathcal{B}(\GZ,F)$ associated to $\SZ$ as per \cite[Lemma 3.4.3]{Kaletha:Regular}. Then $\yZ = \eta_\mathcal{B}(y)$.
\end{lemma}

\begin{proof}
The bijection $\eta_\mathcal{B}$ maps $\mathcal{A}(G,S,F^{\un})$ to $\mathcal{A}(\GZ,\SZ,F^{\un})$. Recall that $y$ is fixed by $\Gal(F^\un/F)$ by definition. Since $\eta_{\mathcal{B}}$ is also defined over $F$, $\eta_{\mathcal{B}}(y)$ is a $\Gal(F^\un/F)$-fixed point. Since $\SZ$ is elliptic, this fixed point is unique \cite[Section 17.8]{KalethaPrasad}, and thus $\yZ = \eta_{\mathcal{B}}(y)$.
\end{proof}


Before moving on to the other parts of the data, it is also important to establish a relationship between the Moy-Prasad filtration subgroups of $G^i(F)$ and those of $\GZ^i(F)$, as they are crucial in the steps of the J.-K.~Yu construction. In particular, we have the following lemma.

\begin{lemma}\label{lem:filtrations}
For all $r>0$ we have $\eta(G(F)_{y,r}) = \GZ(F)_{\yZ,r}$.
\end{lemma}

\begin{proof}
Let $r>0$. Following the proof of \cite[Lemma 3.3.2]{Kaletha:Regular}, use \cite[Lemma 6.4.48]{BT:1972} to write $G(F)_{y,r}$ as the direct product of (topological spaces) $T(F)_r$ and the appropriate affine root subgroups. Here $T$ is a maximally unramified maximally split maximal torus, whose existence is guaranteed by \cite[Corollary 5.1.12]{BT:1984}. Since $\eta$ induces an isomorphism on the affine root subgroups, it suffices to show that $\eta(T(F)_r) = \underZ{T}(F)_r$, where $\underZ{T} = \eta(T) \simeq T/Z$. To do so, let $Z^\circ$ denote the identity component of $Z$. The map $\eta$ factors as follows:
\begin{center}
\begin{tikzcd}
{T}\arrow{rr}{\eta}\arrow{rd}{\eta^\circ} &{} &{\underZ{T}\simeq (T/Z^\circ)/(Z/Z^\circ)}\\
{} &{T/Z^\circ}\arrow{ru}{\overline{\eta}} &{}\\
\end{tikzcd}
\end{center}
We have that $Z^\circ$ is a torus by \cite[Theorem 16.2]{Humphreys:LAG} as it is a closed and connected subgroup of the torus $Z(G)^\circ$. By \cite[Lemma 3.1.3]{Kaletha:Regular}, we have an exact sequence 
$$1\rightarrow Z^\circ(F)_r \rightarrow T(F)_r \rightarrow (T/Z^\circ)(F)_r \rightarrow 1,$$ implying that $(T/Z^\circ)(F)_r \simeq T(F)_r/Z^\circ(F)_r\simeq \eta^\circ(T(F)_r)$. Furthermore, since $\overline{\eta}: T/Z^\circ \rightarrow \underZ{T}$ is an isogeny, \cite[Lemma 3.1.3]{Kaletha:Regular} tells us that $\overline{\eta}((T/Z^\circ)(F)_r) = \underZ{T}(F)_r$. Combining these two equations allows us to conclude that $\eta(T(F)_r) = \underZ{T}(F)_r$ which concludes the proof.
\end{proof}

\begin{remark}\label{rem:filtrations}
Given that $Z$ is a central subgroup of $G^i, 0\leq i\leq d$, we apply Lemma~\ref{lem:filtrations} and obtain $\eta(G^i(F)_{y,r_i}) = \GZ^i(F)_{\yZ,r_i}$. It follows that $\eta$ induces isomorphisms ${G^i(F)_{y,r_i:r_i^+}\simeq \GZ^i(F)_{\yZ,r_i:r_i^+}}$, $0\leq i\leq d$. Using a similar argument, we also have ${\eta(J^{i+1}) = \JZ{i+1}}$, $\eta(J^{i+1}_+) = \JZP{i+1}$ and $J^{i+1}/J^{i+1}_+\simeq \JZ{i+1}/\JZP{i+1}$ for all $0\leq i\leq d-1$, where $J^{i+1} = (G^i,G^{i+1})(F)_{y,(r_i,r_i/2)}$ and $J^{i+1}_+ =(G^i,G^{i+1})(F)_{y,(r_i,{r_i/2}^+)}$ as per \cite[Section 1]{Yu:2001}, and $\JZ{i+1}$ and $\JZP{i+1}$ are defined analogously.

At the depth-zero level, we can only guarantee an inclusion, that is ${\eta(G^0(F))_{y,0} \subset \GZ^0(F)_{\yZ,0}}$. This induces a homomorphism $G^0(F)_{y,0:0^+}\rightarrow \GZ^0(F)_{\yZ,0:0^+}$. This homomorphism is discussed in great detail after the statement of Proposition \ref{prop:rhoeta}. 
\end{remark}

\begin{lemma}\label{lem:etaKs}
Let $\KZ{0} = \GZ(F)_{\yZ}$ and $\KZ{i} = \KZ{0}\,\GZ^1(F)_{\yZ,r_0/2}\,\cdots\, \GZ^{i}(F)_{\yZ,r_{i-1}/2}$ for all ${0\leq i\leq d-1}$. Then $\eta(K^i) = \KZ{i} \cap \eta(G(F))$ for all $0\leq i\leq d$. Furthermore, $\eta(K^i)$ is normalized by $\KZ{0}$ for all $0\leq i\leq d$.
\end{lemma}

\begin{proof}
Let us begin with the case $i=0$. We start by showing that $\eta(K^0)$ is a normal subgroup of $\KZ{0}$. Let $k\in K^0$. Then $k\cdot y = y$ by definition. It follows that $\eta_\mathcal{B}(k\cdot y) = \eta_\mathcal{B}(y)$, or equivalently $\eta(k)\cdot \yZ = \yZ$. Thus $\eta(K^0) \subset \KZ{0}$. For normality, take $k\in K^0$ and $\underZ{k}\in \KZ{0}$. Since $\eta(G^0(F))$ is a normal subgroup of $\GZ^0(F)$, it follows that $\underZ{k}\,\eta(k)\,\underZ{k}^{-1} = \eta(g)$ for some $g\in G^0(F)$. By what precedes, we also have $\eta(g)\in \KZ{0}$, or equivalently $\eta(g)\cdot\yZ = \yZ$. Since the map $\eta_{\mathcal{B}}$ is a bijection, it follows that $g\cdot y = y$. Thus, $g\in K^0$ and $\underZ{k}\,\eta(k)\,\underZ{k}^{-1}\in \eta(K^0)$. For the intersection, it is clear that $\KZ{0}\cap \eta(G(F)) \supset \eta(K^0)$. Conversely, take $\underZ{k}\in \KZ{0}\cap \eta(G(F))$. In particular, $\underZ{k}\in \GZ^0(F) \cap \eta(G(F)) = \eta(G^0(F))$ (Lemma~\ref{lem:intersectGi}). Then $\underZ{k} = \eta(g)$ for some $g\in G^0(F)$  and $\eta(g)\cdot\yZ = \yZ$. Using the bijectivity of $\eta_{\mathcal{B}}$, it follows that $g\in K^0$, and thus $\underZ{k} \in \eta(K^0)$. 

Fix $0 < i \leq d$. Let ${J = G^1(F)_{y,r_0/2}\,\cdots\, G^i(F)_{y,r_{i-1}/2}}$ and ${\underZ{J} = \GZ^1(F)_{\yZ,r_0/2}\,\cdots\, \GZ^i(F)_{\yZ,r_{i-1}/2}}$. By what precedes in Remark~\ref{rem:filtrations}, we have $\eta(J) = \underZ{J}$. It follows that $\KZ{i} =\KZ{0}\,\underZ{J} = \KZ{0}\,\eta(J)$, and therefore $$\KZ{i} \cap \eta(G(F)) = (\KZ{0}\cap \eta(G(F)))\, \eta(J) = \eta(K^0)\,\eta(J) = \eta(K^i).$$
Using the fact that $^g\GZ^j(F)_{\yZ,r} = \GZ^j(F)_{g\cdot\yZ,r}$ for all $g\in \GZ^j(F)$, one sees that $\eta(J) = \underZ{J}$ is normalized by $\KZ{0}$. We conclude, with what precedes for $i=0$, that $\eta(K^i)$ is normalized by $\KZ{0}$. 
\end{proof}

\begin{corollary}\label{cor:AdK}
Let $0\leq i\leq d$. Then $\AdT(\underZ{k})(K^i) = K^i$ for all $\underZ{k}\in \KZ{0}$.
\end{corollary}

\begin{proof}
By the previous lemma, we have $\underZ{k}\,\eta(K^i)\,\underZ{k}^{-1} = \eta(K^i)$ for all $\underZ{k}\in \KZ{0}$. Given ${\underZ{k} \in \KZ{0}\subset \GZ^0(F)}$, $\underZ{k}= \eta(g)$ for some $g\in G^0$ such that $\sigma(g)g^{-1}\in Z$ for all $\sigma\in \Gamma$. It follows that $\eta(gK^ig^{-1}) = \eta(K^i)$, with $gK^ig^{-1} \subset G(F)$ (or equivalently, $\sigma(gK^ig^{-1}) = gK^ig^{-1}$ for all $\sigma\in\Gamma$). As a map on $G(F)$, the kernel of $\eta$ is $Z(F)$, which implies $gK^iZ(F)g^{-1} = K^iZ(F)$. By \cite[Lemma 3.3]{Yu:2001}, one has $K^0 = N_{G^0(F)}(G^0(F)_{y,0})$, hence $Z(F) \subset K^0 \subset K^i$. Thus, we conclude that $gK^ig^{-1} = K^i$, or equivalently, $\AdT(\underZ{k})(K^i) = K^i$.
\end{proof}

\begin{proposition}\label{prop:HoweFact}
Let $(\repZ{-1},\repZ{0},\dots,\repZ{d})$ be a Howe factorization for $\thetaZ$. For each ${-1\leq i\leq d}$, set $\rep{i} = \repZ{i}\circ \eta$. Then $(\rep{-1},\rep{0},\dots,\rep{d})$ is a Howe factorization for $\theta$.
\end{proposition}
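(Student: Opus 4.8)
The plan is to verify the three defining axioms of a Howe factorization (items 1), 2), 3) in the list following \cite[Proposition 3.6.7]{Kaletha:Regular}) for the sequence $(\rep{-1},\rep{0},\dots,\rep{d})$ directly, transporting each condition for $(\repZ{-1},\dots,\repZ{d})$ along $\eta$. The key structural input is Lemma~\ref{lem:LeviSeq}, which gives $d = d_Z$, $\eta(G^i) = \GZ^i$, and $\vec{r} = \vec{r_Z}$; together with Lemma~\ref{lem:filtrations} and Remark~\ref{rem:filtrations} this says that $\eta$ matches up all the relevant subgroups ($G^i(F)$, the Moy-Prasad filtration pieces, the groups $J^{i+1}$, etc.) with their $\GZ$-counterparts, so that pulling a character back along $\eta$ commutes with restriction to these subgroups. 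Note also that since each $\rep{i}$ is by definition $\repZ{i}\circ\eta$, each $\rep{i}$ is automatically a genuine character of $G^i(F)$ (as $\eta(G^i(F)) \subset \GZ^i(F)$).

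First I would check axiom 1): since $\eta(S) = \SZ$ and $\theta = \thetaZ\circ\eta$, restricting the identity $\thetaZ = \prod_{i=-1}^{d}\repZ{i}|_{\SZ(F)}$ and composing with $\eta|_{S(F)}: S(F)\to\SZ(F)$ gives $\theta = \prod_{i=-1}^{d}(\repZ{i}\circ\eta)|_{S(F)} = \prod_{i=-1}^{d}\rep{i}|_{S(F)}$, using that composition with a fixed homomorphism is multiplicative. For axiom 2), I need $\rep{i}$ trivial on $G^i_{\mathrm{sc}}(F)$ for $0\le i\le d$; here one uses that $\eta$ induces an isomorphism on simply connected covers (the derived groups of $G^i$ and $\GZ^i$ have the same simply connected cover, since $\eta$ has central kernel and the root data are identified), so $\eta$ maps the image of $G^i_{\mathrm{sc}}(F)$ into the image of $\GZ^i_{\mathrm{sc}}(F)$, and triviality of $\repZ{i}$ there pulls back to triviality of $\rep{i}$. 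The analogous depth/genericity bookkeeping at the two ends ($i=-1$: trivial if $G^0 = S$, else trivial on $S(F)_{0^+}$, using $\eta(S(F)_{0^+}) \subset \SZ(F)_{0^+}$ and $G^0 = S \iff \GZ^0 = \SZ$ from Lemma~\ref{lem:LeviSeq}; $i=d$: trivial iff $r_d = r_{d-1}$, else depth $r_d$) is routine once the filtration subgroups are matched.

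The main obstacle is axiom 3): for $0\le i<d$, I must show $\rep{i} = \repZ{i}\circ\eta$ is a $G^{i+1}(F)$-generic character of depth $r_i$ in the sense of \cite[Definition 3.9]{HM:2008}, given that $\repZ{i}$ is $\GZ^{i+1}(F)$-generic of depth $r_i$. Depth is straightforward — it is read off the restriction to the filtration subgroups of $S(F)$, which $\eta$ matches by Lemma~\ref{lem:filtrations}. Genericity is the delicate point: the Howe--Moy genericity condition (GE1)/(GE2) is a condition on the associated element of (the dual of) the Lie algebra of the torus, phrased via the root datum of $(G^{i+1}, S)$ and certain non-vanishing conditions on pairings with coroots. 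Since $\eta$ has central kernel and induces a canonical identification of $R(G^{i+1},S)$ with $R(\GZ^{i+1},\SZ)$ (as recorded in the introduction and used in Lemma~\ref{lem:LeviSeq}), together with compatible identifications on the relevant Lie algebra filtration quotients $\z_{-r_i}^{i,*} \simeq (\zZstar)_{-r_i}$ coming from Remark~\ref{rem:filtrations}, the genericity datum of $\rep{i}$ is literally the pullback of that of $\repZ{i}$ under these identifications, so the non-vanishing conditions are preserved. I would carry this out by recalling the explicit form of \cite[Definition 3.9]{HM:2008}, exhibiting the dual-Lie-algebra element $X^*_i$ realizing genericity of $\repZ{i}$, and checking that $X^*_i$ pulled back along $d\eta^*$ realizes genericity of $\rep{i}$ — the pairings with coroots being unchanged because the coroots correspond and $p$ does not divide the relevant Weyl group orders (the standing tameness hypothesis), which is exactly what makes the mock-exponential/Moy--Prasad identifications behave well. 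Once all three axioms are verified, the proposition follows.
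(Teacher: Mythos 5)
Your proposal is correct and follows essentially the same route as the paper: the paper also treats axioms 1) and 2) as immediate, and devotes the proof to axiom 3), realizing $\repZ{i}|_{\GZ^i(F)_{\yZ,r_i}}$ by a generic element $X_Z^*\in\zZstarri$, pulling it back via the dual map $d\eta^*$ to $X^*=d\eta^*(X_Z^*)$, invoking the canonical identification of $R(G^i,S)$ with $R(\GZ^i,\SZ)$ (and the hypothesis on $p$ to drop (GE2)) for genericity, and checking via the compatibility of the mock exponential maps with $\eta$ that $X^*$ realizes $\rep{i}$. The only cosmetic difference is that the paper makes the mock-exponential compatibility an explicit commutative square, which is the verification you defer to at the end.
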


\begin{proof}
One sees that $(\rep{-1},\rep{0},\dots,\rep{d})$ satisfies the two first axioms to be a Howe factorization of $\theta$, so it remains to verify the third axiom.
Let $0\leq i < d$. Verifying the genericity condition requires some additional notation. Following \cite[Section 3.1]{HM:2008}, we let $\z$ denote the centre of $\g^i = \mathrm{Lie}(G^i)$ and $\zstar$ its dual. We also set $\s = \mathrm{Lie}(S), \z_{r_i} = \mathfrak{z}^i(F)\cap \s(F)_{r_i}$ and define $$\mathfrak{z}^{i,*}_{-r_i} = \{X^*\in \zstar(F) : X^*(Y) \in \mathfrak{p}_F \text{ for all }Y\in \z_{r_i^+}\}.$$ We establish similar notation for $\GZ^i$ by adding subscript $Z$. 

Fix a character $\psi$ of $F$ which is nontrivial on $\mathcal{O}_F$ and trivial on $\mathfrak{p}_F$. By definition of genericity \cite[Definition 3.9]{HM:2008}, $\repZ{i}$ is a character of depth $r_i$, and its restriction to $\GZ^i(F)_{\yZ,r_i}$ is realized by a $\GZ^{i+1}(F)$-generic element of $\zZstarri$ of depth $-r_i$. That is, there exists a $\GZ^{i+1}(F)$-generic element $\underZ{X}^*\in \zZstarri$ of depth $-r_i$ in the sense of \cite[Definition 3.7]{HM:2008} such that $\repZ{i}\left(\eZ\left(Y+\gZ^i(F)_{\yZ,r_i^+}\right)\right) = \psi(X^*(Y))$ for all $Y\in \gZ^i(F)_{\yZ,r_i}$, where $$\eZ: \gZ^i(F)_{\yZ,r_i:r_i^+}\rightarrow \GZ^i(F)_{\yZ,r_i:r_i^+}$$ is the isomorphism from \cite{Adler:1998} referred to as the mock exponential map. Note that given our underlying hypothesis on $p$, one may simplify \cite[Definition 3.9]{HM:2008} to \cite[Definition 3.2]{thesisPaper} and omit so-called condition (GE2) from \cite[Definition 3.7]{HM:2008}.

To show that $\rep{i}$ is ${G^{i+1}}(F)$-generic of depth $-r_i$, we must show that $\rep{i}$ is trivial on ${G^i}(F)_{y,r_i^+}$ and that its restriction to ${G^i}(F)_{y,r_i}$ is realized by a ${G^{i+1}}(F)$-generic element of $\zstarri$ of depth $-r_i$. The quotient map $\eta: G^i \rightarrow \GZ^i$ induces a map on the Lie algebras $d\eta: \g^i \rightarrow \gZ^i$. Similarly to Remark~\ref{rem:filtrations}, we have that $d\eta(\g^i(F)_{y,r_i}) = \gZ^i(F)_{\yZ,r_i}$, which induces an isomorphism $\g^i(F)_{y,r_i:r_i^+}\simeq\gZ^i(F)_{\yZ,r_i:r_i^+}$ for all $0\leq i\leq d$. We also have $d\eta(\z) \subset \zZ$ and $d\eta(\z_{r_i}) \subset (\zZ)_{r_i}$. Therefore $d\eta$ induces a dual map
\begin{align*}
d\eta^*: \zZstar &\rightarrow \zstar\\
\underZ{X}^* &\mapsto \underZ{X}^*\circ d\eta,
\end{align*}
which satisfies $d\eta^*(\zZstarri) \subset \zstarri$.
Setting $X^* = d\eta^*(\underZ{X}^*)$, we then see from \cite[Definition 3.7]{HM:2008} that $X^*$ is a ${G^{i+1}}(F)$-generic element of $\zstarri$ of depth $-r_i$, as $R(G^i,S)$ canonically identifies with $R(\GZ^i, \SZ)$. We show that $X^*$ realizes $\rep{i}|_{{G^i}(F)_{y,r_i}}$. Let $e$ denote the mock exponential map from $\g^i(F)_{y,r_i:r_i^+}$ to ${G^i}(F)_{y,r_i:r_i^+}$. One can verify that the following diagram commutes.

\begin{center}
\begin{tikzcd}
{\g^i(F)_{y,r_i:r_i^+}}\arrow{d}{d\eta}\arrow{r}{e} &{{G^i}(F)_{y,r_i:r_i^+}}\arrow{d}{\eta}\\
{\gZ^i(F)_{\yZ,r_i:r_i^+}}\arrow{r}{\eZ} &{{\GZ^i}(F)_{\yZ,r_i:r_i^+}}\\
\end{tikzcd}
\end{center}
From this relationship between the mock exponential maps, it follows that for all $Y\in \g^i(F)_{y,r_i}$,
\begin{align*}
\rep{i}\left(e\left(Y+\g^i(F)_{y,r_i^+}\right)\right) &= \repZ{i}\circ\eta\left(e\left(Y+\g^i(F)_{y,r_i^+}\right)\right) \\
&=\repZ{i}\left(\eZ\left(d\eta(Y)+\gZ^i(F)_{\yZ,r_i^+}\right)\right) \\
&= \psi(\underZ{X}^*(d\eta(Y)))\\
&= \psi(X^*(Y)).
\end{align*}
Thus, we conclude that $\rep{i}$ is ${G^{i+i}}(F)$-generic of depth $r_i$.

For $i=d$, we see that $\rep{d}$ is trivial whenever $\repZ{d}$ is. When $\repZ{d} \neq 1$, $\rep{d}$ must be of the same depth, as ${G}(F)_{y,r_d:r_d^+}\simeq \GZ(F)_{\yZ,r_d:r_d^+}$.

Finally, for $i=-1$, it is clear that $\rep{-1}$ is trivial in the case where $\repZ{-1}$ is trivial, and that $\rep{-1}|_{S(F)_{0^+}} = 1$ whenever $\repZ{-1}|_{\SZ(F)_{0^+}} = 1$ as $\eta(S(F)_{0^+})\subset \SZ(F)_{0^+}$.

\end{proof}

\begin{proposition}\label{prop:rhoeta}
Let $\rho$ and $\rhoZ$ be the representations of $G^0(F)_{y}$ and $\GZ^0(F)_{\yZ}$ constructed from $(S,\theta)$ and $(\SZ,\thetaZ)$, respectively, as per Section~\ref{sec:summaryKaletha}. Then $$\rhoZ\circ\eta = \underset{\underZ{c}\in \underZ{C}}{\oplus}\rho\circ \AdT(\underZ{c}),$$ where $\underZ{C}$ is a set of coset representatives of $\eta(K^0) \setminus \KZ{0}/\SZ(F)$.
\end{proposition}

Note that by Corollary~\ref{cor:AdK}, we have that $\AdT(\underZ{c})(K^0) = K^0$ for all $\underZ{c}\in \KZ{0}$. Therefore, the direct sum decomposition above makes sense as a representation of $K^0$. In order to prove this proposition, we will need to introduce additional notation and state some intermediate results.

Recall from Figure~\ref{fig:summaryRho} that $\rho = \Ind_{S(F)G^0(F)_{y,0}}^{G^0(F)_{y}}\upkappa_{(S,\rep{-1})}$, where $\upkappa_{(S,\rep{-1})}$ is constructed from the Deligne-Lusztig cuspidal representation $\pm R_{\mathcal{S},\overline{\rep{-1}}}$ of $\red(\res)$ with $\mathcal{S}$ a maximal torus of $\red$ which satisfies $\mathcal{S}(\res) \simeq S(F)_{0:0^+}$. Adopting similar notation for $\GZ^0$, we have that $\rhoZ = \Ind_{{\GZ^0}(F)_{\yZ,0}}^{{\GZ^0}(F)_{\yZ}}\upkappa_{(\SZ,{\repZ{-1}})}$, where $\upkappa_{(\SZ,\repZ{-1})}$ is constructed from the Deligne-Lusztig cuspidal representation $\pm R_{\SSZ,\overline{\repZ{-1}}}$ of $\redZ(\res)$, the reductive quotient of ${\GZ^0}$ at $\yZ$, with $\SSZ$ a maximal torus of $\redZ$ which satisfies $\SSZ(\res) \simeq \SZ(F)_{0:0^+}$. 

To understand the relationship between $\rho$ and $\rhoZ$, we must understand the relationship between $\pm R_{\mathcal{S},\overline{\rep{-1}}}$ and $\pm R_{\SSZ,\overline{\repZ{-1}}}$. We first describe how $\eta$ induces a map between $\red$ and $\redZ$. Recall from Remark \ref{rem:filtrations} that $\eta$ induces a homomorphism $G^0(F^\un)_{y,0}\rightarrow \GZ^0(F^\un)_{\yZ,0}$, which we can compose with the quotient map to obtain a homomorphism
$$\begin{aligned}G^0(F^\un)_{y,0}&\rightarrow \GZ^0(F^\un)_{\yZ,0:0^+}\\
g&\mapsto \eta(g)\GZ^0(F^\un)_{y,0^+}.\end{aligned}$$ The kernel of this homomorphism is $(Z\cap G^0(F^\un)_{y,0})\,G^0(F^\un)_{y,0^+},$ 
resulting in an embedding
$$\bigslant{G^0(F^\un)_{y,0}}{(Z\cap G^0(F^\un)_{y,0})\,G^0(F^\un)_{y,0^+}}\hookrightarrow \GZ^0(F^\un)_{\yZ,0:0^+}.$$ By the third isomorphism theorem, the domain of the previous embedding is isomorphic to
$$\bigslant{G^0(F^\un)_{y,0:0^+}}{\Big(\bigslant{ (Z\cap G^0(F^\un)_{y,0})G^0(F^\un)_{y,0^+}}{G^0(F^\un)_{y,0^+}}\Big)}.$$ Given that $Z\subset Z(G)$, it follows that $\bigslant{(Z\cap G^0(F^\un)_{y,0})\,G^0(F^\un)_{y,0^+}}{G^0(F^\un)_{y,0^+}}$ is a closed central subgroup of $G^0(F^\un)_{y,0:0^+}$. As such, it corresponds to a closed central subgroup of $\red(\resun)$, which we denote by $\mathcal{Z}(\resun)$. Given that we can identify reductive groups with their $\resun$-points, what we have just described is an embedding 
$$
\overline{\eta}: \red/\mathcal{Z} \hookrightarrow \redZ.$$ 
Note that this map is $\mathrm{Gal}(\resun/\res)$-equivariant, as $\eta$ is $\mathrm{Gal}(F^\un/F)$-equivariant. Furthermore, $\mathcal{Z}$ is defined over $\res$ by \cite[Corollary 12.1.3]{Springer:LAG}. Thus, the map between $\red$ and $\redZ$ (which is defined over $\res$) is given by
\begin{center}
\begin{tikzcd}[column sep = 0]
\red\arrow{rrr}{q}\arrow[leftrightarrow]{d}{\simeq}  &{} &{} &\red/\mathcal{Z}\arrow[hookrightarrow]{rrr}{\overline{\eta}}\arrow[leftrightarrow]{d}{\simeq} &{} &{} &\redZ\arrow[leftrightarrow]{d}{\simeq} \\
 G^0(F^\un)_{y,0:0^+}\arrow{rrr} &{} &{}   &\bigslant{G^0(F^\un)_{y,0}}{(Z\cap G^0(F^\un)_{y,0})G^0(F^\un)_{y,0^+}}\arrow[hookrightarrow]{rrr} &{} &{} &{\GZ^0(F^\un)_{\yZ,0:0^+}}  \\
 [-20pt] gG^0(F^\un)_{y,0^+} \arrow[mapsto]{rrrrrr} &{} &{} &{} &{} &{} &\eta(g)\GZ^0(F^\un)_{\yZ,0^+},
\end{tikzcd}
\end{center}
where $q$ is the obvious quotient map. To alleviate notation, we will keep the isomorphisms implicit and say that an element of $\red(\resun)$ (or $\red(\res)$) is of the form $gG^0(F^\un)_{y,0^+}$ for some $g\in G^0(F^\un)_{y,0}$ (or $gG^0(F)_{y,0^+}$ for some $g\in G^0(F)_{y,0}$). We start by proving two elementary results involving the maps $\overline{\eta}$ and $q$.

\begin{lemma}\label{lem:etaTori}
Using the above notation, one has $\overline{\eta}\circ q(\mathcal{S})=\SSZ \cap \overline{\eta}(\red/\mathcal{Z})$.
\end{lemma}

\begin{proof}
We identify the reductive groups with their $\resun$-points. Based on the definitions above, we have
$$
\begin{aligned}
\overline{\eta}&\circ q(\mathcal{S}(\resun)) = \overline{\eta}\circ q\left( \bigslant{S(F^\un)_0G^0(F^\un)_{y,0^+}}{G^0(F^\un)_{y,0^+}} \right) \\
&= \bigslant{\eta(S(F^\un)_0)\GZ^0(F^\un)_{\yZ,0^+}}{\GZ^0(F^\un)_{\yZ,0^+}} \\
&\subseteq \Big(\bigslant{\SZ(F^\un)_0\GZ^0(F^\un)_{\yZ,0^+}}{\GZ^0(F^\un)_{\yZ,0^+}} \Big) \bigcap \Big( \bigslant{\eta(G^0(F^\un)_{y,0})\GZ^0(F^\un)_{\yZ,0^+}}{\GZ^0(F^\un)_{\yZ,0^+}}\Big) \\
&= \SSZ(\resun) \cap \overline{\eta}(\red/\mathcal{Z})(\resun).
\end{aligned}$$ Given that both tori are maximal, the equality follows.
\end{proof}

\begin{lemma}\label{lem:redQuotients}
Let $\overline{\eta}$ and $\mathcal{Z}$ be as above. Then $\overline{\eta}(\red/\mathcal{Z}) \supseteq [\redZ,\redZ]$.
\end{lemma}

\begin{proof}
We identify the reductive groups with their $\resun$-points. Based on the definitions above, we have
$$\overline{\eta}(\red(\resun)/\mathcal{Z}(\resun)) = \bigslant{\eta(G^0(F^\un)_{y,0})\GZ^0(F^\un)_{\yZ,0^+}}{\GZ^0(F^\un)_{\yZ,0^+}}.$$
Since $\GZ^0(F^{\un})_{\yZ,0}$ is generated by $\SZ(F^{\un})_0$ and certain root subgroups (cf. (\ref{eq:parahoric})), and root subgroups are normalized by toral elements, it follows that $[\GZ^0(F^{\un})_{\yZ,0},\GZ^0(F^{\un})_{\yZ,0}]$ consists only of products of root subgroup elements. Since $\eta$ induces an isomorphism on the affine root subgroups, we conclude that $[\GZ^0(F^{\un})_{\yZ,0},\GZ^0(F^{\un})_{\yZ,0}] \subseteq \eta(G^0(F^\un)_{y,0})$. It follows that $$[\redZ(\resun),\redZ(\resun)] = \bigslant{[\GZ^0(F^\un)_{\yZ,0},\GZ^0(F^\un)_{\yZ,0}]\GZ^0(F^\un)_{\yZ,0^+}}{\GZ^0(F^\un)_{\yZ,0^+}}\subseteq \overline{\eta}\left(\redZ(\resun)/\mathcal{Z}(\resun)\right).$$
\end{proof}

\begin{lemma}\label{lem:redundancyParahoricQuotient}
We have the equality $\GZ^0(F)_{y,0} = \SZ(F)_0\,\eta(G^0(F)_{y,0})$.
\end{lemma}

\begin{proof}
Using Lemma \ref{lem:redQuotients}, we have that $\redZ = \SSZ \,\overline{\eta}(\red/\mathcal{Z})$. Furthermore, the intersection of $\SSZ$ with $\overline{\eta}(\red/\mathcal{Z})$ is a maximal torus of $\overline{\eta}(\red/\mathcal{Z})$. As a consequence of Lang's theorem, $H^1(\res,\SSZ\cap \overline{\eta}(\red/\mathcal{Z})) = 1$. Combining this with the usual Galois cohomology sequence, $${\redZ(\res) = \SSZ(\res)\,\overline{\eta}(\red/\mathcal{Z})(\res) = \SSZ(\res)\,\overline{\eta}\left(\left({\red}/{\mathcal{Z}}\right)(\res)\right)}.$$ We have that $\left({\red}/{\mathcal{Z}}\right)(\res) = C \left( \red(\res)/\mathcal{Z}(\res)\right),$ where $C$ is a set of coset representatives of $\bigslant{(\red/\mathcal{Z})(\res)}{\left( \red(\res)/\mathcal{Z}(\res) \right)}$. Using Lemma \ref{lem:cohomology}, we may assume without loss of generality that $C\subseteq q(\mathcal{S})(\res)$. Therefore,
$$\redZ(\res) = \SSZ(\res)\, \overline{\eta}(C)\, \overline{\eta}(\red(\res)/\mathcal{Z}(\res)) = \SSZ(\res)\, \overline{\eta}(\red(\res)/\mathcal{Z}(\res)).$$ Given that $\red(\res) = G^0(F)_{y,0:0^+}$, it follows that $\mathcal{Z}(\res) = \bigslant{\left(Z\cap G^0(F)_{y,0}\right)G^0(F)_{y,0^+}}{G^0(F)_{y,0^+}}$. Therefore,
$$\overline{\eta}(\red(\res)/\mathcal{Z}(\res)) = \bigslant{\eta(G^0(F)_{y,0})\GZ^0(F)_{y,0^+}}{\GZ^0(F)_{y,0^+}}.$$ Using the identifications $\redZ(\res) \simeq \GZ^0(F)_{y,0:0^+}$ and $\SSZ(\res) \simeq \SZ(F)_0\GZ^0(F)_{y,0^+}/\GZ^0(F)_{y,0^+}$, the above equality over the $\res$-points implies the following equation
$$\GZ^0(F)_{y,0:0^+} =\left( \bigslant{\SZ(F)_0\GZ^0(F)_{y,0^+}}{\GZ^0(F)_{y,0^+}}\right) \left( \bigslant{\eta(G^0(F)_{y,0})\GZ^0(F)_{y,0^+}}{\GZ^0(F)_{y,0^+}}\right),$$ from which we conclude that
$$\GZ^0(F)_{y,0} = \SZ(F)_0\,\eta(G^0(F)_{y,0})\,\GZ^0(F)_{y,0^+}.$$ Finally, we have that $\GZ^0(F)_{y,0^+} = \eta(G^0(F)_{y,0^+})$. Indeed, using Lemma \ref{lem:filtrations}, we have
$$\GZ^0(F)_{y,0^+} = \underset{r > 0}{\cup}\GZ^0(F)_{y,r} = \underset{r > 0}{\cup}\eta(G^0(F)_{y,0}) = \eta\left(\underset{r > 0}{\cup} G^0(F)_{y,0}\right) = \eta(G^0(F)_{y,0^+}).$$ Thus, we conclude that $$\GZ^0(F)_{y,0} = \SZ(F)_0\,\eta(G^0(F)_{y,0}).$$
\end{proof}

\begin{remark}
One can apply an argument similar to the one in the proof of Lemma \ref{lem:redundancyParahoric} to obtain $\GZ^0(F^\un)_{y,0} = \SZ(F^\un)_0\,\eta(G^0(F^\un)_{y,0})$. However, we do not see a quick Galois cohomology argument to get the equality at the level of the $F$-points.
\end{remark}

Using the map $\overline{\eta}\circ q$, and Lemma \ref{lem:redQuotients}, we can establish the following relationship between the virtual characters $\pm R_{\mathcal{S},\overline{\rep{-1}}}$ and $\pm R_{\SSZ,\overline{\repZ{-1}}}$.

\begin{proposition}\label{prop:virtualChars}
Given the above notation, one has $\pm R_{\mathcal{S},\overline{\rep{-1}}} = \pm R_{\SSZ,\overline{\repZ{-1}}} \circ (\overline{\eta}\circ q).$
\end{proposition}

\begin{proof}
We start by showing that $R_{\mathcal{S},\overline{\rep{-1}}} = R_{\SSZ,\overline{\repZ{-1}}} \circ (\overline{\eta}\circ q).$
Let $\bar{g}\in \red(\res)$. One has
$$R_{\SSZ,\overline{\repZ{-1}}} \circ (\overline{\eta}\circ q)(\bar{g}) = R_{\SSZ,\overline{\repZ{-1}}}|_{\overline{\eta}(\red/\mathcal{Z})(\res)} (\overline{\eta}\circ q(\bar{g})).$$ By Lemma \ref{lem:redQuotients}, one has $[\redZ,\redZ]\subseteq \overline{\eta}(\red/\mathcal{Z})$. Thus, we can apply \cite[Theorem A.2]{thesisPaper} to obtain
$$R_{\SSZ,\overline{\repZ{-1}}} \circ (\overline{\eta}\circ q)(\bar{g}) = R_{\SSZ\cap \overline{\eta}(\red/\mathcal{Z}),\overline{\repZ{-1}}|_{\SSZ\cap \overline{\eta}(\red/\mathcal{Z})}}(\overline{\eta}\circ q(\bar{g})).$$ Given that virtual characters remain constant under isomorphism (see e.g. \cite[Corollary 6.18]{thesis}), we modify the right-hand side of the previous equation accordingly and obtain
$$R_{\SSZ,\overline{\repZ{-1}}} \circ (\overline{\eta}\circ q)(\bar{g}) = R_{\overline{\eta}^{-1}(\SSZ\cap \overline{\eta}(\red/\mathcal{Z})),\overline{\repZ{-1}}|_{\SSZ\cap \overline{\eta}(\red/\mathcal{Z})}\circ \overline{\eta}}(q(\bar{g})).$$ By Lemma \ref{lem:etaTori}, we have $\overline{\eta}^{-1}(\SSZ\cap \overline{\eta}(\red/\mathcal{Z})) = q(\mathcal{S})$. Next, we claim that ${\overline{\repZ{-1}}|_{\SSZ\cap \overline{\eta}(\red/\mathcal{Z})}\circ \overline{\eta} = \overline{\rep{-1}}.}$ Indeed, the restriction $\overline{\eta}\circ q|_{\red(\res)}$ maps $gG^0(F)_{y,0^+}$ to $\eta(g)\GZ^0(F)_{y,0^+}$ for all $g\in G^0(F)_{y,0}$, and therefore
$$\overline{\repZ{-1}}\circ\overline{\eta}\circ q (gG^0(F)_{y,0^+}) = \overline{\repZ{-1}}(\eta(g)\GZ^0(F)_{y,0^+}) = \repZ{-1}(\eta(g)).$$ As in Proposition \ref{prop:HoweFact}, we set $\repZ{-1}\circ \eta = \rep{-1}$, which allows us to conclude that 
$$\overline{\repZ{-1}}\circ\overline{\eta}\circ q (gG^0(F)_{y,0^+}) = \rep{-1}(g) = \overline{\rep{-1}}(gG(F)_{y,0^+}).$$ Getting back to our virtual character expression, we have
$$R_{\SSZ,\overline{\repZ{-1}}} \circ (\overline{\eta}\circ q)(\bar{g}) = R_{q(\mathcal{S}),\overline{\repZ{-1}}|_{\SSZ\cap \overline{\eta}(\red/\mathcal{Z})}\circ \overline{\eta}}(q(\bar{g})),$$ with $\overline{\repZ{-1}}|_{\SSZ\cap \overline{\eta}(\red/\mathcal{Z})}\circ \overline{\eta} = \overline{\rep{-1}}.$ Thus, we apply Theorem \ref{th:virtualCharQuotient} to the right-hand side of the above equation and obtain
$$R_{\SSZ,\overline{\repZ{-1}}} \circ (\overline{\eta}\circ q)(\bar{g}) = R_{\mathcal{S},\overline{\rep{-1}}}(\bar{g}).$$

Now, recall that $\pm R_{\mathcal{S},\overline{\rep{-1}}} = (-1)^{r_{\res}(\red)-r_{\res}(\mathcal{S})}R_{\mathcal{S},\overline{\rep{-1}}},$ where $r_{\res}(\red)$ and $r_{\res}(\mathcal{S})$ denote the $\res$-split ranks of $\red$ and $\mathcal{S}$, respectively. Thus, the proof will be complete once we show that ${r_{\res}(\red)-r_{\res}(\mathcal{S})} = {r_{\res}(\redZ)-r_{\res}(\SSZ)}$. By \cite[Corollary 22.7]{Borel:LAG}, we have that $$r_{\res}(\red) - r_{\res}(\mathcal{S}) = r_{\res}(\red/\mathcal{Z}) - r_{\res}(\mathcal{S}/\mathcal{Z}).$$ Given that the restriction of $\overline{\eta}$ to its image is an isomorphism, it follows that 
$$r_{\res}(\red) - r_{\res}(\mathcal{S}) = r_{\res}(\overline{\eta}(\red/\mathcal{Z})) - r_{\res}(\overline{\eta}(\mathcal{S}/\mathcal{Z})) = r_{\res}(\overline{\eta}(\red/\mathcal{Z}))  - r_{\res}(\SSZ\cap \overline{\eta}(\red/\mathcal{Z})),$$ where the last equality follows from Lemma \ref{lem:etaTori}. Since $\overline{\eta}(\red/\mathcal{Z}) \supseteq [\redZ,\redZ]$ (Lemma \ref{lem:redQuotients}), we have
$$r_{\res}(\overline{\eta}(\red/\mathcal{Z})) - r_{\res}(\SSZ\cap \overline{\eta}(\red/\mathcal{Z})) = r_{\res}(\redZ) - r_{\res}(\SSZ)$$ as a consequence of \cite[Proposition 4.27]{BorelTits:1965} (see for instance \cite[Proposition 2.1.45]{thesis}). Thus, we conclude that
$${r_{\res}(\red)-r_{\res}(\mathcal{S})} = {r_{\res}(\redZ)-r_{\res}(\SSZ)}.$$
\end{proof}

\begin{corollary}\label{cor:kappaeta}
Let $\upkappa_{(S,\rep{-1})}$ and $\upkappa_{(\SZ,\repZ{-1})}$ be as defined earlier in this section. Then $\upkappa_{(S,\rep{-1})} = \upkappa_{(\SZ,\repZ{-1})} \circ \eta.$
\end{corollary}

\begin{proof}
Given a representation $\overline{\chi}$ of a quotient group, let $_p\overline{\chi}$ denote its pullback. In light of Proposition \ref{prop:virtualChars}, we have 
$$_p{R_{\mathcal{S},\overline{\rep{-1}}}} = {_p({R_{\SSZ,\overline{\repZ{-1}}}\circ \overline{\eta}\circ q})}.$$ It is simple to verify $$_p({R_{\SSZ,\overline{\repZ{-1}}}\circ \overline{\eta}\circ q}) = {_p{R_{\SSZ,\overline{\repZ{-1}}}}}\circ \eta.$$ 
It follows that $\upkappa_{(\SZ,\repZ{-1})}\circ \eta |_{G^0(F)_{y,0}} = \upkappa_{(S,\rep{-1})}|_{G^0(F)_{y,0}}$, so that $\upkappa_{(\SZ,\repZ{-1})}\circ \eta$ is an extension of $_p{R_{\mathcal{S},\rep{-1}}}$. Given that the extensions are unique, 
we conclude that $\upkappa_{(S,\rep{-1})} = \upkappa_{(\SZ,\repZ{-1})}\circ\eta.$
\end{proof}

%

We now have all the tools to prove Proposition \ref{prop:rhoeta}.

\begin{proof}[Proof of Proposition~\ref{prop:rhoeta}]
From the Mackey Decomposition, we have
\begin{align*}
\rhoZ\circ \eta &= \left( \Ind_{\SZ(F)\GZ^0(F)_{\yZ,0}}^{\KZ{0}}\upkappa_{(\SZ,\phiZ^{-1})} \right) \circ\eta \\
&= \left( \Res_{\eta(K^0)}^{\KZ{0}}\Ind_{\SZ(F)\GZ^0(F)_{\yZ,0}}^{\KZ{0}}\upkappa_{(\SZ,\phiZ^{-1})} \right) \circ\eta \\
&= \underset{\underZ{c}\in \underZ{C}}{\oplus}\left[ \left( \Ind_{\eta(K^0)\cap {^{\underZ{c}}(\SZ(F)\GZ^0(F)_{\yZ,0})}}^{\eta(K^0)} \Res_{\eta(K^0)\cap {^{\underZ{c}}(\SZ(F)\GZ^0(F)_{\yZ,0})}}^{^{\underZ{c}}(\SZ(F)\GZ^0(F)_{\yZ,0})} {^{\underZ{c}}\upkappa_{(\SZ,\phiZ^{-1})}}\right) \circ \eta \right],
\end{align*}
where $\underZ{C}$ is a set of coset representatives of $\eta(K^0)\setminus \KZ{0}/ \SZ(F)\GZ^0(F)_{\yZ,0}$, which is equal to $\eta(K^0)\setminus \KZ{0}/\SZ(F)$ by Lemma \ref{lem:redundancyParahoricQuotient}. Given that $\eta(K^0)$ is a normal subgroup of $\KZ{0}$ (Lemma~\ref{lem:etaKs}) and that $\eta(K^0) \cap \SZ(F)\GZ^0(F)_{\yZ,0} = \eta(S(F)G^0(F)_{y,0})$ (as a consequence of Lemma~\ref{lem:redundancyParahoricQuotient}), it follows that 
\begin{align*}
\eta(K^0)\cap {^{\underZ{c}}(\SZ(F)\GZ^0(F)_{\yZ,0})} &= {^{\underZ{c}}(\eta(K^0)\cap \SZ(F) \GZ^0(F)_{\yZ,0})} \\ &= {^{\underZ{c}}\eta(S(F)G^0(F)_{y,0})} \\
&= \eta(\AdT(\underZ{c})(S(F)G^0(F)_{y,0})).
\end{align*}
As a result, we may simplify the above expression and apply Proposition \ref{prop:induction} to obtain 
\begin{align*}
\rhoZ\circ\eta &= \underset{\underZ{c}\in \underZ{C}}{\oplus} \left[ \left( \Ind_{\eta(\AdT(\underZ{c})(S(F)G^0(F)_{y,0}))}^{\eta(K^0)}\Res_{\eta(\AdT(\underZ{c})(S(F)G^0(F)_{y,0}))}^{^{\underZ{c}}(\SZ(F)\GZ^0(F)_{\yZ,0})}{^{\underZ{c}}\upkappa_{(\SZ,\phiZ^{-1})} }\right) \circ \eta \right] \\
&\simeq \underset{\underZ{c}\in \underZ{C}}{\oplus}\Ind_{\AdT(\underZ{c})(S(F)G^0(F)_{y,0})}^{\AdT(\underZ{c})(K^0)}\left( {^{\underZ{c}}\upkappa_{(\SZ,\phiZ^{-1})}}\circ\eta \right).
\end{align*}
Applying Lemma \ref{lem:conjThrough}, followed by Proposition \ref{prop:kappaeta} on the previous expression, we obtain
\begin{align*}
\rhoZ\circ\eta &\simeq \underset{\underZ{c}\in \underZ{C}}{\oplus}\Ind_{\AdT(\underZ{c})(S(F)G^0(F)_{y,0})}^{\AdT(\underZ{c})(K^0)}\left[\left( \upkappa_{(\SZ,\phiZ^{-1})}\circ\eta \right)\circ \AdT(\underZ{c}^{-1}) \right] \\
&=\underset{\underZ{c}\in \underZ{C}}{\oplus}\Ind_{\AdT(\underZ{c})(S(F)G^0(F)_{y,0})}^{\AdT(\underZ{c})(K^0)} \left( \upkappa_{(S,\phi^{-1})} \circ \AdT(\underZ{c}^{-1}) \right).
\end{align*}
Finally, we apply Proposition \ref{prop:conjInd} to extract the $\AdT$ map from the induction and get
\begin{align*}
\rhoZ\circ\eta &\simeq \underset{\underZ{c}\in \underZ{C}}{\oplus} \left(\Ind_{S(F)G^0(F)_{y,0}}^{K^0} \upkappa_{(S,\phi^{-1})}\right) \circ \AdT(\underZ{c}^{-1})\\
&= \underset{\underZ{c}\in \underZ{C}}{\oplus} \rho\circ \AdT(\underZ{c}^{-1}).
\end{align*} The conclusion follows by noting that $\{\underZ{c}^{-1} : \underZ{c}\in \underZ{C}\}$ is another set of coset representatives of $\eta(K^0)\setminus \KZ{0}/\SZ(F)$. 
\end{proof}

Proposition \ref{prop:rhoeta} completes the proof of Theorem~\ref{th:matchingData}.

\subsubsubsection{Going Through the Steps of the J.-K.~Yu Construction}\label{sec:stepsCommute}

Let $(S,\theta)$ and $(\SZ,\thetaZ)$ be tame $F$-non-singular elliptic pairs of $G$ and $\GZ$, respectively, such that $\eta(S) = \SZ$ and $\theta=\thetaZ\circ\eta$. In the previous section, we have established the relationship between the corresponding J.-K.~Yu data, $(\vec{G},y,\vec{r},\rho,\vec{\phi})$ and $(\vec{\GZ},\yZ,\vec{r},\rhoZ,\vec{\phiZ})$, respectively. It is from these data that we construct the representations $\pi_{(S,\theta)}$ and $\pi_{(\SZ,\thetaZ)}$ following the steps of the J.-K.~Yu construction as outlined in Figure~\ref{fig:summaryJKYu}. To be consistent with notation, we will keep using subscript $Z$ to differentiate between the construction over $\GZ$ from that of $G$. Since we have that $\rep{i} = \repZ{i}\circ\eta$ for all $0\leq i\leq d$ and $\rhoZ\circ\eta = \underset{\underZ{c}\in\underZ{C}}{\oplus}\rho\circ \AdT(\underZ{c})$ with $\underZ{C}$ a set of coset representatives of $\eta(K^0)\setminus\KZ{0}/\SZ(F)$, it is natural to expect that we also have $\prim{i} = \primZ{i}\circ\eta$, $\kapZ{-1}\circ\eta = \underset{\underZ{c}\in\underZ{C}}{\oplus} \kap{-1} \circ\AdT(\underZ{c})$ and $\kap{i} = \kapZ{i}\circ\eta$ for all $0\leq i\leq d$. Indeed, we prove these equalities and inclusion with Propositions~\ref{prop:phieta} and \ref{prop:kappaeta}, and illustrate them in Figure~\ref{fig:etaCommute}. In particular, one can say that the J.-K.~Yu construction commutes with the map $\eta$.  The above results allow us to complete the proof of Theorem~\ref{th:mainTheorem} at the end of this section. 

 \begin{figure}[htbp!]
\center \begin{tikzpicture}
  \matrix (m) [matrix of math nodes,row sep=0.1em,column sep=1.5em,minimum width=0.1em]
  {
    {} &{} &{} & \text{$\left(\repZ{i}, {\KZ{i}}\right)$} &{} &\text{$\left(\rep{i}, {K^i}\right)$} \\
    {} &{} &{} &{} &\text{$\circlearrowleft$} &{} \\
     \text{$\left(\rhoZ, {\KZ{0}}\right)$} &{} &\text{$\left( \underset{\underZ{c}\in\underZ{C}}{\oplus}\rho\circ\AdT(\underZ{c}), {K^0} \right)$} &\text{$\left(\primZ{i}, {\KZ{i+1}}\right)$} &{} &\text{$\left(\prim{i}, {K^{i+1}}\right)$} \\
      {} &\text{$\circlearrowleft$} &{} &{} &\text{$\circlearrowleft$} &{} \\
     \text{$\left(\kapZ{-1}, {\KZ{d}}\right)$} &{} &\text{$\left( \underset{\underZ{c}\in\underZ{C}}{\oplus}\kap{-1} \circ \AdT(\underZ{c}), {K^d}\right)$} & \text{$\left(\kapZ{i}, {\KZ{d}}\right)$} &{} &\text{$\left( \kap{i}, {K^d}\right)$}\\
    };
    
\path[-stealth]
   (m-3-1) edge node [left] {inflate} (m-5-1)    
    (m-3-3) edge node [right] {inflate} (m-5-3)
    (m-1-4) edge node [above] {$\circ\eta$} (m-1-6)
   (m-1-4) edge node [left] {extend} (m-3-4)
    (m-3-4) edge node [below] {$\circ\eta$} (m-3-6)    
    (m-1-6) edge node [right] {extend} (m-3-6)
    (m-3-4) edge node [left] {inflate} (m-5-4)
    (m-3-6) edge node [right] {inflate} (m-5-6)    
    (m-5-4) edge node [below] {$\circ\eta$} (m-5-6);
    
\path[-stealth]
	(m-3-1) edge node [above] {$\circ \eta$} (m-3-3)
	(m-5-1) edge node [below] {$\circ\eta$} (m-5-3);

\end{tikzpicture}
\caption{Commutativity of the composition with $\eta$ with extension and inflation.}
\label{fig:etaCommute}
\end{figure}

\begin{proposition}\label{prop:phieta}
For all $0\leq i\leq d$ we have $\prim{i} = \primZ{i}\circ\eta$.
\end{proposition}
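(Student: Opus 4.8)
The plan is to recall the J.-K.~Yu construction step producing $\prim{i}$ from $\rep{i}$ and to show that each ingredient in that step is compatible with precomposition by $\eta$. Recall that $\prim{i}$ is obtained from $\rep{i}$ by (Weil--Heisenberg) extension: one extends $\rep{i}$, which is a character of $G^i(F)_{[y]}$ that is trivial on $J^{i+1}_+$, across the subgroup $J^{i+1}$ using the Heisenberg representation attached to the symplectic space $J^{i+1}/J^{i+1}_+$ together with the special isomorphism $G^{i+1}(F)_{y,r_i/2}$-action, ultimately producing a representation of $K^{i+1} = K^i G^1(F)_{y,r_0/2}\cdots G^{i+1}(F)_{y,r_i/2}$. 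So the first step is to list precisely which objects enter: the group $J^{i+1}$, its filtration quotient $J^{i+1}/J^{i+1}_+$ viewed as a symplectic $\res$-space, the character of $J^{i+1}_+$ coming from $\rep{i}$, and the Heisenberg--Weil data built from these.

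Next I would invoke the earlier results to transport each of these under $\eta$. By Remark~\ref{rem:filtrations} we already have $\eta(J^{i+1}) = \JZ{i+1}$, $\eta(J^{i+1}_+) = \JZP{i+1}$, and an \emph{isomorphism} $J^{i+1}/J^{i+1}_+ \simeq \JZ{i+1}/\JZP{i+1}$ of the relevant filtration quotients; since these quotients are where the symplectic form lives and the form is defined via the commutator pairing (which is manifestly preserved by a group isomorphism), $\eta$ identifies the two symplectic spaces. By Proposition~\ref{prop:HoweFact} (more precisely $\rep{i} = \repZ{i}\circ\eta$) together with the commuting square of mock exponential maps established in its proof, the character of $J^{i+1}_+$ used to anchor the Heisenberg representation on the $G$ side is exactly the pullback along $\eta$ of the one used on the $\GZ$ side. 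Because the Heisenberg representation attached to a symplectic space and a central character is unique up to equivalence, and because the Weil-representation extension across $G^{i+1}(F)_{y,r_i/2}$ is likewise canonical (it only depends on the symplectic space and its automorphisms, which are matched by $\eta$ since $R(G^i,S)$ is canonically identified with $R(\GZ^i,\SZ)$), it follows that the extension of $\rep{i}$ to $K^{i+1}$ is the pullback along $\eta$ of the extension of $\repZ{i}$ to $\KZ{i+1}$; that is, $\prim{i} = \primZ{i}\circ\eta$. I would phrase this as: apply the uniqueness statement for the J.-K.~Yu extension step (as recorded in \cite{Yu:2001}, and used in the summary of Section~\ref{sec:summaryKaletha}) to the two $G$-data, which differ only by the matching-of-data established in Theorem~\ref{th:matchingData}.

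The main obstacle I anticipate is bookkeeping the domains of the intermediate representations: $\prim{i}$ lives on $K^{i+1}$ while $\primZ{i}$ lives on $\KZ{i+1}$, and Remark~\ref{rem:filtrations} only gives $\eta(K^{i+1}) \subset \KZ{i+1}$ (with equality of the "positive-depth" pieces but only an inclusion at the depth-zero factor $K^0 \subset \KZ{0}$). So one must check that composing the $\KZ{i+1}$-representation $\primZ{i}$ with $\eta$ indeed lands on all of $K^{i+1}$ and agrees there with the independently-constructed $\prim{i}$; this amounts to observing that the Heisenberg--Weil construction is functorial enough that restricting along the inclusion $\eta(K^{i+1})\hookrightarrow \KZ{i+1}$ and then pulling back along $\eta: K^{i+1}\twoheadrightarrow \eta(K^{i+1})$ reproduces the $G$-side extension, which in turn follows because the symplectic data and anchoring character match exactly as above. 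Once the domains are reconciled, the equality $\prim{i} = \primZ{i}\circ\eta$ is immediate from uniqueness of the extension, and no genuine computation remains.
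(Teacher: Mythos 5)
Your outline matches the paper's proof: first reduce to matching the intermediate extension $\phihat{i} = \phihatZ{i}\circ\eta$ via uniqueness (agreement with $\rep{i}$ on $K^i$ together with triviality on $(G^i,G^{i+1})(F)_{y,(r_i^+,s_i^+)}$), then compare the Heisenberg--Weil lifts using the isomorphisms supplied by Remark~\ref{rem:filtrations}. The domain point you flag at the end is genuine but resolves itself automatically: since $\eta(K^{i+1}) \subset \KZ{i+1}$, the composite $\primZ{i}\circ\eta$ is already a representation of $K^{i+1}$, and nothing further is needed.

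The one place your argument is not adequate as written is the Weil-lift comparison. Appealing to the canonical identification $R(G^i,S) \simeq R(\GZ^i,\SZ)$ is not the relevant mechanism here. The lift $\weil$ is a representation of $K^i \ltimes \Heis$ (with $\Heis = J^{i+1}/\ker\xiG$) determined by a choice of special isomorphism and by the homomorphism $K^i \to \Sp(W)$ coming from conjugation; the assertion $\weil = \weilZ\circ\eta$ requires that $\eta$ intertwine all of this data on both sides. What actually makes this work is that $\eta$ is a group homomorphism inducing isomorphisms $\Heis \simeq \HeisZ$ and $W \simeq \WZ$, so it intertwines the conjugation actions, and the special isomorphisms of \cite[Lemma 2.35]{HM:2008} are compatible with $\eta$. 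The paper packages this precisely by invoking \cite[Proposition 3.2]{Nevins:2015} with $H_1 = \Heis$, $H_2 = \HeisZ$, $W_1 = W$, $W_2 = \WZ$, $T_1 = K^i$, $T_2 = \KZ{i}$, $\alpha = \delta = \eta$, and $f_1,f_2$ the conjugation actions. You should cite that result or prove its analogue; ``the Weil extension is canonical'' does not by itself address the compatibility of the special isomorphisms or of the maps into the symplectic group. Finally, note the degenerate case $J^{i+1} = J^{i+1}_+$ (equivalently $\JZ{i+1} = \JZP{i+1}$, again by Remark~\ref{rem:filtrations}), where no lift is required and $\prim{i} = \phihat{i}$, $\primZ{i} = \phihatZ{i}$ directly.
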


In order to define the extension $\prim{i}$ of $\rep{i}$, we require the groups $J^{i+1}$ and $J^{i+1}_+$, which were previously mentioned in Remark~\ref{rem:filtrations}. The extension process is divided in two steps: the first step consists of extending $\rep{i}$ to a character $\phihat{i}$ of $K^iG^{i+1}(F)_{y,s_i^+}$, where $s_i = r_i/2$. The character $\phihat{i}$ is the unique character of $K^iG^{i+1}(F)_{y,s_i^+}$ that agrees with $\rep{i}$ on $K^i$ and is trivial on $(G^i,G^{i+1})(F)_{y,(r_i^+,s_i^+)}$ \cite[Section 3.1]{HM:2008}. When $J^{i+1}\neq J^{i+1}_+$, a second step is required to extend the character $\phihat{i}$ a little further to a representation of $K^{i+1}$ by means of a Heisenberg-Weil lift. We adopt analogous notations to describe the extension $\primZ{i}$ of $\repZ{i}$. We note that $J^{i+1} = J^{i+1}_+$ if and only if $\JZ{i+1} = \JZP{i+1}$ (as a consequence of Remark~\ref{rem:filtrations}), which ensures that the construction of $\prim{i}$ requires a Heisenberg-Weil lift if and only if that of $\primZ{i}$ does. 

\begin{proof}[Proof of Proposition~\ref{prop:phieta}]
We have that $\phihat{i} = \phihatZ{i}\circ\eta$. Indeed, given that $\eta(K^i)\subset \KZ{i}$ and $\eta((G^i,G^{i+1})(F)_{y,(r_i^+,s_i^+)}) = (\GZ^i,\GZ^{i+1})(F)_{\yZ,(r_i^+,s_i^+)}$ (Remark~\ref{rem:filtrations}), one sees that $\phihatZ{i}\circ\eta$ agrees with $\rep{i}$ on ${K^i}$ and that it is trivial on $(G^i,G^{i+1})(F)_{y,(r_i^+,s_i^+)}$. 

In the case where $J^{i+1} = J^{i+1}_+$, we have $\prim{i} = \phihat{i}$ and $\primZ{i} = \phihatZ{i}$ and we are done. In the case where $J^{i+1}\neq J^{i+1}_+$, we have that $\prim{i}$ is constructed using a Heisenberg-Weil lift $\weil$, which is a representation of $K^i\ltimes \Heis$, where $\Heis = J^{i+1}/\ker(\xiG)$ and $\xiG = \phihat{i}|_{J^{i+1}_+}$. We then have $\prim{i}(kj) = \phihat{i}(k)\weil(k,j\ker(\xiG))$ for all $k\in K^i, j\in J^{i+1}$. Since $J^{i+1} \neq J^{i+1}_+$ if and only if $\JZ{i+1} \neq \JZP{i+1}$ (Remark~\ref{rem:filtrations}), we also require a Heisenberg-Weil lift $\weilZ$, which is a representation of $\KZ{i}\ltimes\HeisZ$, where $\HeisZ = \JZ{i+1}/\ker(\xiZ)$ and $\xiZ = \phihatZ{i}|_{\JZP{i+1}}$, and have that $\primZ{i}(\underZ{k}\underZ{j}) = \phihatZ{i}(k)\weilZ(\underZ{k},\underZ{j}\ker(\xiZ))$ for all $\underZ{k}\in\KZ{i}, \underZ{j}\in\JZ{i+1}$. 

Since we already know that $\phihat{i} = \phihatZ{i}\circ\eta$, it then suffices to show that $\weil = \weilZ\circ\eta$. Note that the map $\eta$ induces isomorphisms $\Heis\simeq \HeisZ$ and $W\simeq \WZ$ by Remark~\ref{rem:filtrations}, where $W = J^{i+1}/J^{i+1}_+$ and $\WZ = \JZ{i+1}/\JZP{i+1}$. We then obtain that $\weil = \weilZ\circ\eta$ as an application of \cite[Proposition 3.2]{Nevins:2015}, in which we set $H_1 = \Heis$, $H_2= \HeisZ$, $W_1 = W$, $W_2 = \WZ$, $T_1 = K^i$, $T_2 = \KZ{i}$, $\alpha = \delta = \eta$, $\nu_1$ and $\nu_2$ the corresponding special isomorphisms from \cite[Lemma 2.35]{HM:2008}, and $f_1$ and $f_2$ the homomorphisms coming from the actions by conjugation of $K^i$ and $\KZ{i}$ on $J^{i+1}$ and $\JZ{i+1}$, respectively.
\end{proof}

\begin{proposition}\label{prop:kappaeta}
For all $0\leq i\leq d$ we have $\kap{i} = \kapZ{i}\circ\eta$. Furthermore, $$\kapZ{-1}\circ\eta = \underset{\underZ{c}\in \underZ{C}}{\oplus}\kap{-1}\circ \AdT(\underZ{c}),$$ where $\underZ{C}$ is a set of coset representatives of $\eta(K^0)\setminus\KZ{0}/\SZ(F)$.
\end{proposition}

\begin{proof}
Let $0\leq i\leq d-1$. Let us briefly recall the process of inflation. We have that $K^d = K^{i+1}J$, where $J = J^{i+2}\cdots J^d$. Then, for all $k\in K^{i+1}, j\in J$, $\kap{i}(kj) = \prim{i}(k)$. Similarly, we have $\KZ{d} = \KZ{i+1}\underZ{J}$, where $\underZ{J} = \JZ{i+2}\cdots \JZ{d}$, and $\kapZ{i}(\underZ{k}\,\underZ{j}) = \primZ{i}(\underZ{k})$ for all ${\underZ{k}\in \KZ{i+1}}$, $\underZ{j}\in \underZ{J}$.

Using these definitions, for all $k\in {K^{i+1}}, j\in J$, we have ${\kap{i}(kj) = \prim{i}(k) = \primZ{i}(\eta(k))}$. By Remark~\ref{rem:filtrations}, we have that $\eta(k)\in \KZ{i+1}$ and $\eta(j)\in \underZ{J}$. Therefore, ${\primZ{i}(\eta(k)) = \kapZ{i}(\eta(k)\,\eta(j)) = \kapZ{i}\circ\eta(kj)}$. Thus, we conclude that $\kap{i} = \kapZ{i}\circ\eta$.

By a similar argument, we have that $$\underZ{\kappa}^{-1} \circ \eta = \underset{\underZ{c}\in \underZ{C}}{\oplus} \kappa^{-1}\circ \AdT(\underZ{c})$$ as a consequence of having (Proposition \ref{prop:rhoeta}) $$\rhoZ\circ\eta = \underset{\underZ{c}\in \underZ{C}}{\oplus}\rho\circ \AdT(\underZ{c}),$$ where $\underZ{C}$ is a set of coset representatives of $\eta(K^0)\setminus\KZ{0}/\SZ(F)$.
\end{proof}

We are now in a position to complete the proof of our main theorem.

\begin{proof}[Proof of Theorem~\ref{th:mainTheorem}]
We start by showing that $\kappa_{(\SZ,\thetaZ)}\circ \eta = \underset{\underZ{l}\in \underZ{L}}{\oplus}\kappa_{(S,\theta)}\circ \AdT(\underZ{l}),$ where $\underZ{L}$ is a set of coset representatives of $\eta(K^0)\setminus \KZ{0} / \SZ(F)$. For all $\underZ{l}\in \underZ{L}$, we have that $^{\underZ{l}^{-1}}\phiZ^i = \phiZ^i$, as $\KZ{0}\subset \GZ^i(F)$ for all $i$. It follows that $^{\underZ{l}^{-1}}\underZ{\kappa}^i \simeq \underZ{\kappa}^i$, which implies $^{\underZ{l}^{-1}}\kappa^i_Z\circ \eta \simeq \kappa^i_Z \circ \eta$. Using Proposition \ref{prop:kappaeta}, we conclude that $\kappa^i\circ \AdT(\underZ{l}) \simeq \kappa_i$ for all $\underZ{l}\in \underZ{L}$. Furthermore, Proposition \ref{prop:kappaeta} also tells us that $\kappa^{-1}_Z\circ \eta = \underset{\underZ{l}\in \underZ{L}}{\oplus}\kappa^{-1}\circ \AdT(\underZ{l})$. We thus obtain the following chain of equalities:
\begin{align*}
\kappa_{(\SZ,\thetaZ)}\circ\eta &= \left[\kappa^{-1}_Z \otimes \left( \bigotimes_{i=0}^d \kappa^i_Z \right)\right] \circ \eta \\
&= \kappa^{-1}_Z\circ \eta \otimes \left( \bigotimes_{i=0}^d \kappa^i_Z\circ\eta \right) \\
&= \left( \underset{\underZ{l}\in \underZ{L}}{\oplus} \kappa^{-1}\circ\AdT(\underZ{l}) \right) \otimes \bigotimes_{i=0}^d \kappa^i \\
&= \underset{\underZ{l}\in \underZ{L}}{\oplus}\left( \kappa^{-1} \circ \AdT(\underZ{l})\otimes \bigotimes_{i=0}^d\kappa^i \right) \\
&\simeq \underset{\underZ{l}\in \underZ{L}}{\oplus}  \left(\kappa^{-1} \otimes \bigotimes_{i=0}^d\kappa^i \right)\circ \AdT(\underZ{l})  \\
&= \underset{\underZ{l}\in \underZ{L}}{\oplus} \kappa_{(S,\theta)}\circ \AdT(\underZ{l}).
\end{align*}

Note that by Corollary~\ref{cor:AdK}, we have that $\AdT(\underZ{l})(K^d) = K^d$ for all $\underZ{l}\in \underZ{L}\subseteq \KZ{0}$. Therefore, the direct sum decomposition above makes sense as a representation of $K^d$.

From the Mackey Decomposition, we have
\begin{align*}
\pi_{(\SZ,\thetaZ)}\circ \eta &= \left(\Ind_{\KZ{d}}^{\GZ(F)}\kappa_{(\SZ,\thetaZ)}\right)\circ\eta \\
&=\left(\Res^{\GZ(F)}_{\eta(G(F))}\Ind_{\KZ{d}}^{\GZ(F)}\kappa_{(\SZ,\thetaZ)}\right)\circ\eta \\
&= \underset{\underZ{\ell}\in \underZ{\mathcal{L}}}{\oplus} \left( \Ind_{\eta(G(F))\cap {^{\underZ{\ell}} \KZ{d}}}^{\eta(G(F))}\Res^{^{\underZ{\ell}} \KZ{d}}_{\eta(G(F))\cap {^{\underZ{\ell}} \KZ{d}}} {^{\underZ{\ell}}\kappa_{(\SZ,\thetaZ)}}\right) \circ \eta,
\end{align*}
where $\underZ{\mathcal{L}}$ is a set of coset representatives of $\eta(G(F))\setminus \GZ(F) / \KZ{d}$. Given that $\eta(G(F))$ is a normal subgroup of $\GZ(F)$ and that $\eta(G(F))\cap \KZ{d} = \eta(K^d)$ (Lemma~\ref{lem:etaKs}), it follows that 
\begin{align*}
\eta(G(F)) \cap {^{\underZ{\ell}} \KZ{d}} &= {^{\underZ{\ell}}(\eta(G(F)) \cap \KZ{d})} \\ &= {^{\underZ{\ell}} \eta(K^d)} \\
&= \eta(\AdT(\underZ{\ell})(K^d)).
\end{align*} As a result, we may simplify the above expression and apply Proposition \ref{prop:induction} to obtain
\begin{align*}
\pi_{(\SZ,\thetaZ)}\circ \eta &= \underset{\underZ{\ell}\in \underZ{\mathcal{L}}}{\oplus}\left( \Ind_{\eta(\AdT(\underZ{\ell})(K^d))}^{^{\underZ{\ell}} \eta(G(F))}\Res_{\eta(\AdT(\underZ{\ell})(K^d))}^{^{\underZ{\ell}} \KZ{d}} {^{\underZ{\ell}}\kappa_{(\SZ,\thetaZ)}} \right) \circ\eta \\
&\simeq \underset{\underZ{\ell}\in \underZ{\mathcal{L}}}{\oplus}\Ind_{\AdT(\underZ{\ell})(K^d)}^{\AdT(\underZ{\ell})(G(F))}\left({^{\underZ{\ell}}\kappa_{(\SZ,\thetaZ)}\circ\eta}\right).
\end{align*}
We then apply Lemma \ref{lem:conjThrough}, followed by Proposition \ref{prop:conjInd} on the previous expression and get
\begin{align*}
\pi_{(\SZ,\thetaZ)}\circ \eta &\simeq \underset{\underZ{\ell}\in \underZ{\mathcal{L}}}{\oplus}\Ind_{\AdT(\underZ{\ell})(K^d)}^{\AdT(\underZ{\ell})(G(F))}\left[\left(\kappa_{(\SZ,\thetaZ)}\circ\eta\right)\circ \AdT(\underZ{\ell}^{-1})\right] \\
&\simeq \underset{\underZ{\ell}\in \underZ{\mathcal{L}}}{\oplus}\left( \Ind_{K^d}^{G(F)} \kappa_{(\SZ,\thetaZ)}\circ \eta \right) \circ \AdT(\underZ{\ell}^{-1}).
\end{align*}
By what precedes, it follows that
\begin{align*}
\pi_{(\SZ,\thetaZ)}\circ\eta &= \underset{\underZ{\ell}\in \underZ{\mathcal{L}}}{\oplus}\Ind_{K^d}^{G(F)}\left( \underset{\underZ{l}\in \underZ{L}}{\oplus} \kappa_{(S,\theta)} \circ \AdT(\underZ{l})\right) \circ \AdT(\underZ{\ell}^{-1})\\
&= \underset{\underZ{\ell}\in \underZ{\mathcal{L}}}{\oplus} \underset{\underZ{l}\in \underZ{L}}{\oplus} \Ind_{\AdT(\underZ{l})(K^d)}^{\AdT(\underZ{l})(G(F))}\left( \kappa_{(S,\theta)}\circ \AdT(\underZ{l}) \right) \circ \AdT(\underZ{\ell}^{-1}) \\
&\simeq \underset{\underZ{\ell}\in \underZ{\mathcal{L}}}{\oplus}\underset{\underZ{l}\in \underZ{L}}{\oplus} \left(\Ind_{K^d}^{G(F)}\kappa_{(S,\theta)} \right) \circ \AdT(\underZ{l})\circ \AdT(\underZ{\ell}^{-1})\\
&= \underset{\underZ{\ell}\in \underZ{\mathcal{L}}}{\oplus} \underset{\underZ{l}\in \underZ{L}}{\oplus} \pi_{(S,\theta)}\circ \AdT(\underZ{l}\,\underZ{\ell}^{-1}).
\end{align*}

Finally, we claim that $\{\underZ{l}\,\underZ{\ell}^{-1} : \underZ{l}\in \underZ{L}, \underZ{\ell}\in \underZ{\mathcal{L}}\}$ is a set of coset representatives of $\eta(G(F))\setminus \GZ(F)/\SZ(F)$, which we denote by $\underZ{D}$, allowing us to write
$$\pi_{(\SZ,\thetaZ)}\circ\eta = \underset{\underZ{d}\in \underZ{D}}{\oplus} \pi_{(S,\theta)}\circ \AdT(\underZ{d}).$$

To prove this last claim, we set $A =\eta(G(F)), B=\GZ(F), C=\KZ{d},\bar{A} = \eta(K^0), {\bar{B} = \KZ{0}}$ and $\bar{C} = \SZ(F)$ and show that $A,B,C,\bar{A},\bar{B},\bar{C}$ satisfy the hypotheses of part 1) of Lemma~\ref{lem:combineCosets}. It is clear that $A$ and $\bar{A}$ are normal subgroups of $B$ and $\bar{B}$, respectively, and that $\bar{B}\subseteq C$ and $A\cap \bar{B} \subseteq \bar{A}$. It remains to show that $\bigslant{C}{(A\cap C)\bar{C}}\simeq \bigslant{\bar{B}}{\bar{A}\bar{C}}.$

Recall that $\KZ{d} = \KZ{0}J_Z$ and $\eta(K^d) = \eta(K^0J) = \eta(K^0)J_Z$, where $J$ and $J_Z$ are as in the proof of Lemma~\ref{lem:etaKs}. Therefore,
$$
\bigslant{C}{(A\cap C)\bar{C}} = \bigslant{\KZ{d}}{\eta(K^d)\bar{C}} 
=  \bigslant{\KZ{0}J_Z}{\eta(K^0)J_Z\bar{C}}.
$$
Given that $\bar{C}$ is in the stabilizer of $\yZ$, it follows that $\bar{C}$ normalizes $J_Z$ and thus $J_Z\bar{C} = \bar{C}J_Z$. This, in combination with our modified third isomorphism theorem (Lemma~\ref{lem:modifiedisothm}) allows us to obtain
$$
\bigslant{C}{(A\cap C)\bar{C}} \simeq \bigslant{\KZ{0}}{\eta(K^0)\bar{C}(\KZ{0}\cap J_Z)}= \bigslant{\KZ{0}}{\eta(K^0)\bar{C}},
$$ where the last equality follows from the fact that $\KZ{0}\cap J_Z = \KZ{0}\cap \eta(J) \subset \eta(K^0)$ (Lemma \ref{lem:etaKs}). Thus, we conclude from part 1) of Lemma \ref{lem:combineCosets} that 
$\{\underZ{l}\, \underZ{\ell}^{-1}: \underZ{l}\in \underZ{L}, \underZ{\ell}\in \underZ{\mathcal{L}}\}$ is a set of coset representatives of $\eta(G(F))\setminus \GZ(F)/\SZ(F)$.
\end{proof}

\section{Functoriality for Supercuspidal $L$-packets}\label{sec:packetRelations}

Recall that $\PhiSC(G)$ denotes the set (of conjugacy classes) of supercuspidal $L$-parameters of $G$. Given our hypothesis on $p$, every $\param\in\PhiSC(G)$ has the property that $\param(P_F)$ is contained in a maximal torus of $\widehat{G}$ \cite[Lemma 4.1.3]{Kaletha:SCPackets}. Such parameters are called \emph{torally wild} in \cite{Kaletha:SCPackets}. Since all supercuspidal parameters we consider in this paper are torally wild, we will omit these adjectives. 

Given $\param\in\PhiSC(G)$, we let $\Pi_\param$ denote the associated $L$-packet of \cite{Kaletha:SCPackets}. Kaletha provides an explicit parameterization for $\Pi_\param$, and elements therein consist entirely of supercuspidal representations obtained from the construction outlined in Section~\ref{sec:summaryKaletha}. Thus, when ${\param\in\PhiSC(G)}$, we shall refer to $\Pi_\param$ as a \emph{supercuspidal} $L$-packet.

\subsection{The Construction of Supercuspidal $L$-packets}\label{sec:constructPackets}

 In order to describe Kaletha's construction of supercuspidal $L$-packets, we must first familiarize ourselves with his notion of a supercuspidal $L$-packet datum. We start this section by recalling the definition below.

\begin{definition}[{\cite[Definition 4.1.4]{Kaletha:SCPackets}}]\label{def:packetdatum}
A supercuspidal $L$-packet datum of $G$ is a tuple $(S,\jhat,\chi_0,\theta)$, where
\begin{enumerate}
\item[1)] $S$ is a torus of dimension equal to the absolute rank of $G$, defined over $F$ and split over a tame extension of $F$;
\item[2)] $\jhat: \widehat{S} \rightarrow \widehat{G}$ is an embedding of complex reductive groups whose $\widehat{G}$-conjugacy class is $\Gamma$-stable;
\item[3)] $\chi_0 = (\chi_{\alpha_0})_{\alpha_0}$ is tamely ramified $\chi$-data for $R(G,S^0)$, where $S^0$ is a particular subtorus of $S$ defined from $R_{0+}$ as explained in \cite[p.41]{Kaletha:SCPackets};
\item[3)] and $\theta :S(F)\rightarrow \bb{C}^\times$ is a character;
\end{enumerate}
subject to the condition that $(S,\theta)$ is a tame $F$-non-singular elliptic pair in the sense of \cite[Definition 3.4.1]{Kaletha:SCPackets}.
\end{definition}

Despite appearances, the torus $S$ does not actually live inside
$G$. It is an abstract torus that will be embedded into $G$ below.

The notion of $\chi$-data was introduced in \cite{LS:1987} and is recalled in \cite[Section 4.6]{Kaletha:Regular}. It is not necessary for the reader to be familiar with $\chi$-data in what follows. For our purposes, one can think of $\chi$-data for $R(G,S^0)$ simply as a set of characters of subfields of $F$ which are indexed by the root system.

By \cite[Proposition 4.1.8]{Kaletha:SCPackets}, there is a one-to-one correspondence between the $\widehat{G}$-conjugacy classes of supercuspidal $L$-parameters for $G$ and isomorphism classes of supercuspidal $L$-packet data. Following the proof of \cite[Proposition 4.1.8]{Kaletha:SCPackets}, given $\param\in\PhiSC(G)$, one constructs a representative $(S,\jhat,\chi_0,\theta)$ of the corresponding isomorphism class of supercuspidal $L$-packet data as follows:
\begin{itemize}
\item $S$: Let $\widehat{M} = \Cent(\param(P_F),\widehat{G})^\circ$, $\widehat{C} = \Cent(\param(I_F),\widehat{G})^\circ$ and $\widehat{S} = \Cent(\widehat{C},\widehat{M})$. By \cite[Lemma 5.2.2]{Kaletha:Regular} and \cite[Lemma 4.1.3]{Kaletha:SCPackets}, $\widehat{M}$ is Levi subgroup of $\widehat{G}$, $\widehat{C}$ is a torus of $\widehat{G}$ and $\widehat{S}$ is a maximal torus of $\widehat{G}$. The action of $W_F$ (which extends to $\Gamma$) on $\widehat{S}$ is defined as $\Ad(\param(-))$. The torus $S$ is then the torus dual to $\widehat{S}$.
\item $\jhat$: one simply takes $\jhat$ to be the set inclusion $\widehat{S} \hookrightarrow \widehat{G}$.
\item $\chi_0$: one chooses a tame $\chi$-data $\chi_0$ for $S^0$, which extends to a $\chi$-data for $S$ by \cite[Remark 4.1.5]{Kaletha:SCPackets}. 
\item $\theta$: Following \cite[Section 2.6]{LS:1987}, the $\chi$-data allows one to extend $\jhat$ to an embedding $^Lj: {^LS} \rightarrow {^LG}$. The image of $^Lj$ contains the image of $\param$ so that we may write $\param = {^Lj}\circ\param_S$ for some $L$-parameter $\param_S$ of $S$. We let $\theta$ be the corresponding character of $S(F)$ via the LLC for tori.
\end{itemize} 

We will say that $(S,\jhat,\chi_0,\theta)$ is the \emph{supercuspidal
$L$-packet datum associated to} $\param\in\PhiSC(G)$. The embedding
$\jhat$ belongs to a $\Gamma$-stable $\widehat{G}$-conjugacy class
$\widehat{J}$ of embeddings $\widehat{S} \rightarrow \widehat{G}$, and
from $\widehat{J}$ we obtain a $\Gamma$-stable
$G(F^{\mathrm{sep}})$-conjugacy class $J$ of embeddings $S\rightarrow
G$ (called \emph{admissible embeddings}) as per \cite[Section
  5.1]{Kaletha:Regular} and \cite[Sections 6.1 and 7.1]{Dillery}. We denote by $J(F)$ the set of $G(F)$-conjugacy classes of elements of $J$ which are defined over $F$. For each $j\in J(F)$, we consider the torus $jS = j(S)$ and let $j\theta = \theta\circ j^{-1}\cdot\epsilon_j$, where $\epsilon_j$ is the specific character from \cite[Section 4.1]{FKS}, described at the end of Section~\ref{sec:summaryKaletha}. Each pair $(jS,j\theta)$ is a tame $F$-non-singular elliptic pair from which we can construct a supercuspidal representation $\pi_{(jS,j\theta)}$ as described in Section \ref{sec:summaryKaletha}. The supercuspidal $L$-packet $\Pi_\param$ is then defined as
$$\Pi_\param \defeq \{[\pi_{(jS,j\theta)}]:j\in J(F)\},$$
where $j$ is identified with its $G(F)$-conjugacy class and $\pi_{(jS,j\theta)}$ is identified with its equivalence class.
Similarly, given $\paramT\in \PhiSC(\GT)$, we denote the associated supercuspidal $L$-packet datum by $(\ST,\jT,\chiT,\thetaT)$, and let $\JT(F)$ be the set of $\GT(F)$-conjugacy classes of admissible embeddings obtained from the $\Gamma$-stable $\widehat{\GT}$-conjugacy class of $\jT$ which are defined over $F$, so that
$$\Pi_{\paramT} = \{[\pi_{(\underT{j}\ST,\underT{j}\thetaT)}]:\underT{j}\in \JT(F)\}.$$

What we have denoted by $\Pi_\param$ is what Kaletha denotes as $\Pi_\param(G)$ in \cite{Kaletha:SCPackets}. Kaletha assigns the notation $\Pi_\param$ to his ``compound packet'' which encompasses rigid inner forms of $G$.


\subsection{Matching the Supercuspidal $L$-packet Data}\label{sec:packetData}

The relationship between $\Pi_{\paramT}$
and $\Pi_\param$, where $\paramT\in\PhiSC(\GT)$ and
$\param={^L\eta}\circ\param$, is given by matching
their corresponding parameterizing data through ${^L\eta}$.  The map
${^L\eta}$ is defined by $^L\eta(g,w) =
(\widehat{\eta}(\underT{g}),w)$ for all $\underT{g}\in\widehat{\GT}, w\in W_F$, and so
we begin with a review of the map  $\widehat{\eta}:
\widehat{\GT} \rightarrow \widehat{G}$ \cite[Sections 1 and
  2]{Springer:1979}.  Recall that we are assuming that $G$ is
quasisplit.  We may therefore fix a Borel subgroup $B \subset G$
defined over $F$, and fix a maximally $F$-split maximal torus $T$ of $G$
with $T\subset B$.    
This choice of Borel pair $(B,T)$ is equivalent to fixing a based root datum
for $G$.  The 
$\Gamma$-equivariant map $\eta$ carries $T$ to a maximally
$F$-split torus $\eta(T)$ of $\eta(G) \supset [G,G]$.  Set $\underT{T}
= \eta(T)Z(\GT)$ and $\underT{B} = \eta(B) Z(\GT)$.  Then
$(\underT{B}, \underT{T})$ is a Borel pair for $\GT$ which is
defined over $F$
\cite[Corollary 8.1.6]{Springer:LAG}.   Thus, we have fixed based root
data for both $G$ and $\GT$, and $\eta$ induces a homomorphism
between them (\emph{cf.} \cite[1.13]{Jantzen}).   This homomorphism of
root data determines a homomorphism from the dual based root datum of
$\GT$ to the dual based root datum of $G$.  These two
dual based data may be identified with the  based root
data arising from $\Gamma$-invariant Borel pairs $(\widehat{\underT{B}},
\widehat{\underT{T}})$ and $(\widehat{B}, \widehat{T})$ which are implicit in the
definition of ${^L\GT}$ and ${^LG}$
\cite[2.3]{Borel:1979}. Under this identification 
and the assumptions of Theorem \ref{th:desideratum},
the homomorphism of dual based root data produces the homomorphism
$\widehat{\eta}: \widehat{\GT} \rightarrow \widehat{G}$ of algebraic
groups with abelian kernel and cokernel \cite[Proposition
  1.14]{Jantzen}.  It is only unique up to conjugation by $\widehat{T}$.

A change in the choice of
based root data above has the effect of conjugating $\widehat{\eta}$ by an
element of $\widehat{G}$.  This change has no effect on the equivalence
classes of the objects under consideration.  However, even for fixed
based root data and a fixed choice of $\widehat{\eta}$, there remains an
ambiguity between $\eta$ and 
$\widehat{\eta}$.  Indeed, conjugating $\eta$ by an element $\underT{t} \in
\underT{T}$ such that $\underT{t}Z(\GT) \in
(\underT{T}/Z(\GT))(F)$ produces another $F$-homomorphism whose
dual is identical to $\widehat{\eta}$.   As observed in
\cite[Section 3]{Solleveld:2020}, this ambiguity is consequential in
matching the parameters of the $L$-packets under consideration.

In order to be more precise about these matters, we fix some pinnings.  We
extend the $\Gamma$-invariant Borel pair $(B,T)$ to a
$\Gamma$-invariant pinning  $(T,B,\{X_\alpha\})$ of $G$.  The
application of $\eta$ to this pinning fixes a $\Gamma$-invariant
pinning $(\underT{T}, \underT{B}, \{\eta(X_{\alpha})\})$ of
$\GT$.   It follows from \cite[Theorem
  3.2]{Solleveld:2020} that 
$\eta: G \rightarrow \GT$ is the unique $F$-homomorphism in its
$(\GT/Z(\GT))(F)$-conjugacy class which
carries the former pinning to the latter (see the discussion
preceding \cite[Theorem 6.2]{Solleveld:2020}).
Underlying the definition of ${^LG}$ and
${^L\GT}$ are  respective  pinnings
$(\widehat{T},\widehat{B},\{Y_{\widehat{\alpha}}\})$ of $\widehat{G}$
and $(\widehat{\underT{T}},\widehat{\underT{B}},
\{\underT{Y}_{\widehat{\alpha}}\})$ of $\widehat{\GT}$.  The fixed pinnings determine
$\widehat{\eta}$ uniquely by the requirement that
$\{\widehat{\eta}(Y_{\hat{\alpha}})\} = \{\underT{Y}_{\hat{\alpha}}\}$. 
Furthermore,  if $\eta' : G \rightarrow
 \GT$ is another $F$-homomorphism satisfying the assumptions of
 Theorem  \ref{th:desideratum} and $\widehat{\eta'} =
\widehat{\eta}$, then there is a unique element
$\underT{t}'Z(\GT) \in 
(\underT{T}/Z(\GT))(F)$ such that $\eta' = \Ad(\underT{t}') \circ
\eta$  \cite[Proposition 3.4]{Solleveld:2020}.

Having set the foundation for the comparison between $G$ and $\GT$,
let us return to the matching of the apposite parameters.   They are
to be matched as in Figure~\ref{fig:summaryData}.

\begin{theorem}\label{th:data}
Let $\paramT\in \PhiSC(\GT)$, $\param = {^L\eta}\circ\paramT$ and $(\ST,\jT,\chiT,\thetaT)$ and $(S,\jhat,\chi_0,\theta)$ be the associated supercuspidal $L$-packet data. Let $J(F)$ and $\JT(F)$ be the sets of embeddings which parameterize $\Pi_\param$ and $\Pi_{\paramT}$, respectively. Then $\widehat{\eta}(\widehat{\ST}) \subset \widehat{S}$, $\chiT = \chi_0$, $\widehat{\eta}\circ\jT = \jhat\circ\widehat{\eta}$ and $\theta=\thetaT\circ\eta_S$, where $\eta_S$ is the dual map of $\widehat{\eta}|_{\widehat{\ST}}: \widehat{\ST}\rightarrow \widehat{S}$. Furthermore, for all $\underT{j}\in \JT(F)$, there exists $j\in J(F)$ such that $\eta(jS) \subset \underT{j}\ST$ and $j\theta = \underT{j}\thetaT\circ\eta$.
\end{theorem}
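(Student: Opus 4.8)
The plan is to separate Theorem~\ref{th:data} into its two assertions: first the ``rigid'' comparison of the dual-side data $(\widehat{S},\widehat{j},\chi_0,\theta)$ versus $(\widehat{\ST},\jT,\chiT,\thetaT)$, and second the existence, for each $\tilde{j}\in\JT(F)$, of a compatible $j\in J(F)$. For the first part I would unwind the recipe recalled just before the theorem. Since $\param={}^L\eta\circ\paramT$, we have $\param(P_F)=\widehat{\eta}(\paramT(P_F))$ and $\param(I_F)=\widehat{\eta}(\paramT(I_F))$; applying $\Cent(-,\widehat{G})^\circ$ and using that $\widehat{\eta}$ has abelian kernel and cokernel (so $\widehat{\eta}(\widehat{\GT})\supset[\widehat{G},\widehat{G}]$ and $\ker\widehat\eta$ is central) one should get $\widehat{M}=\Cent(\param(P_F),\widehat G)^\circ\supset\widehat{\eta}(\widehat{M}_{\widetilde G})$ with the analogous statement for $\widehat C$, hence $\widehat{\eta}(\widehat{\ST})\subset\widehat S$ with the $W_F$-actions (both given by $\Ad\circ\param(-)$ resp. $\Ad\circ\paramT(-)$) intertwined by $\widehat\eta$. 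The identification $\widehat{\eta}\circ\jT=\widehat{j}\circ\widehat{\eta}$ is then immediate because both $\widehat j$ and $\jT$ are the set inclusions of the respective maximal tori. For $\chiT=\chi_0$ one uses that $R(\GT,\ST)$ and $R(G,S)$ (equivalently $R(\widehat G,\widehat S)$ and $R(\widehat{\GT},\widehat{\ST})$) are canonically identified, as is the subtorus datum $S^0$, so a choice of tame $\chi$-data on one side transports to the other and we may simply choose them to agree; then $\theta=\thetaT\circ\eta'$ follows by the functoriality of the LLC for tori applied to the dual map $\eta'$ of $\widehat{\eta}|_{\widehat{\ST}}$, using that the ${}^Lj$-extensions built from the matching $\chi$-data are compatible with ${}^L\eta$ so that $\param_S={}^L(\eta')\circ\paramT_{\ST}$ (in an appropriate sense), whence the corresponding characters satisfy $\theta=\thetaT\circ\eta'$.

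For the second assertion I would argue as follows. Fix $\tilde j\in\JT(F)$, a $\GT(F)$-conjugacy class of admissible embeddings $\ST\to\GT$ defined over $F$. Composing with $\eta$ and using $\eta(\ST\cap\eta(G))=$ (the relevant subtorus), or more directly using the canonical identification of root data and the structure of admissible embeddings from \cite[Section 5.1]{Kaletha:Regular}, one gets an admissible embedding $S\to G$ lying in the $G(\overline F)$-conjugacy class $J$ and arranged so that $\eta(jS)\subset\tilde j\ST$; the point is that an admissible embedding is determined up to conjugacy by its effect on root data together with a cohomological/$\chi$-data normalization, and since these match by the first part of the theorem, $\eta\circ(\text{chosen lift of }\tilde j)$ realizes a member of $J$. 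The subtlety is getting this to descend to $F$-rational conjugacy classes, i.e. to produce $j\in J(F)$ rather than merely in $J$: here I would use that $J(F)$ is nonempty \cite[Corollary 2.2]{Kottwitz:1982} and finite \cite[Lemma 3.4]{Kaletha:2016}, and that $\eta$ induces a map $J(F)\to\JT(F)$ on $F$-rational conjugacy classes whose fibers and image can be controlled by the $H^1(F,-)$ of the relevant tori; one shows this map hits the class of $\tilde j$. Once $j$ is fixed with $\eta(jS)\subset\tilde j\ST$, the character identity $j\theta=\tilde j\thetaT\circ\eta$ is obtained by combining $\theta=\thetaT\circ\eta'$, the transport formula $j\theta=\theta\circ j^{-1}\cdot\epsilon_j$ and $\tilde j\thetaT=\thetaT\circ\tilde j^{-1}\cdot\epsilon_{\tilde j}$, and checking that the correction characters match, $\epsilon_j=\epsilon_{\tilde j}\circ\eta$ on $jS(F)$; this last equality should follow from the definition of the $\epsilon$ characters in \cite[Section 4.1]{FKS} via the quadratic characters $\epsilon^{G^i/G^{i-1}}$, which depend only on the twisted Levi structure and root data of $(jS\subset G)$ versus $(\tilde j\ST\subset\GT)$, and these correspond under $\eta$ by Lemma~\ref{lem:LeviSeq}.

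I expect the main obstacle to be precisely the $F$-rationality bookkeeping in the second assertion: ensuring that the embedding $j$ produced by composing a rational representative of $\tilde j$ with $\eta$ can be taken to be defined over $F$ and to represent a genuine class in $J(F)$, rather than getting stuck with a class that is only $G(\overline F)$-rationally of the right type. Disentangling the admissible-embedding formalism of \cite[Section 5.1]{Kaletha:Regular} (which involves $\chi$-data, a choice of a $\Gamma$-fixed point, and a Galois cohomology torsor) sufficiently to push an embedding through the non-surjective, non-injective map $\eta$ while keeping track of cocycles is the delicate point; by contrast, the dual-group part and the character identity are essentially formal once the $\chi$-data are chosen to agree and Lemma~\ref{lem:LeviSeq} is invoked for the $\epsilon$-factors.
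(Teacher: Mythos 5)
Your outline of the dual-side comparison is close in spirit to the paper's, but the step ``$\widehat{M}\supset\widehat{\eta}(\MT)$ and likewise for $\widehat{C}$, hence $\widehat{\eta}(\widehat{\ST})\subset\widehat{S}$'' hides the actual work. From $s\in\widehat{\ST}=\Cent(\CT,\MT)$ you only get that $\widehat{\eta}(s)$ centralizes $\widehat{\eta}(\CT)$, whereas membership in $\widehat{S}=\Cent(\widehat{C},\widehat{M})$ requires centralizing the possibly larger torus $\widehat{C}$; since $\ker\widehat{\eta}$ is merely central, an element $x\in\widehat{\GT}$ with $\widehat{\eta}(x)\in\widehat{C}$ need not centralize $\paramT(I_F)$ (the commutator $i\mapsto[x,\paramT(i)]$ is a homomorphism into the finite central kernel and can be nontrivial), so the easy inclusions do not by themselves give the containment. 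The paper instead proves the equalities $\widehat{\eta}(\MT)=\widehat{M}\cap\widehat{\eta}(\widehat{\GT})$ and $\widehat{\eta}(\CT)=\widehat{C}\cap\widehat{\eta}(\widehat{\GT})$ via explicit root-subgroup generation of these centralizers, transfers the centralizer property through $\widehat{\eta}$ with a criterion in terms of roots, and then compares two maximal tori of $\widehat{\eta}(\widehat{\GT})$. Your treatment of the $\chi$-data, of $\widehat{\eta}\circ\jT=\widehat{j}\circ\widehat{\eta}$, and of $\theta=\thetaT\circ\eta'$ via functoriality of the LLC for tori matches the paper.

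The genuine gap is in the second assertion, which you yourself flag as the main obstacle but do not resolve. Your proposed route --- an induced map $J(F)\to\JT(F)$ whose image ``hits the class of $\tilde{j}$'' by $H^1(F,-)$ bookkeeping --- is not an argument: first, pushing $j$ forward along $\eta$ does not obviously produce an admissible embedding of $\ST$, because $\eta'\colon S\to\ST$ is in general not surjective, so even the well-definedness of such a map needs proof; second, surjectivity onto the class of $\tilde{j}$ is precisely the content of the theorem, and no cohomological computation is indicated that would rule out the image missing classes. The paper argues in the opposite (and easier) direction: it fixes pinned base embeddings $\widehat{\tilde{i}}$ and $\widehat{i}$ compatible under $\widehat{\eta}$, writes $\tilde{j}=\Ad(\tilde{g})\circ\tilde{i}$ with $\tilde{g}\in\GZ(\overline{F})$ (using $\GT=Z(\GT)^\circ\GZ$), lifts $\tilde{g}$ to $g\in G(\overline{F})$, and sets $j=\Ad(g)\circ i$, which gives $\eta(jS)\subset\tilde{j}\ST$ by construction; the delicate point of $F$-rationality is then settled by a short descent argument (Kottwitz supplies $h$ with $\Ad(h)\circ j$ defined over $F$; rationality of $\Ad(\eta(h))\circ\tilde{j}$ forces $h^{-1}\sigma(h)\in jS$, whence $\sigma j\sigma^{-1}=j$, so $j$ itself lies in $J(F)$). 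Your closing step, $\epsilon_j=\epsilon_{\tilde{j}}\circ\eta$ via the matched twisted Levi sequences and hence $j\theta=\tilde{j}\thetaT\circ\eta$, does agree with the paper, but it only applies once the existence and rationality of $j$ have actually been established.
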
 

\begin{figure}[!htbp]
\begin{center}
\begin{tikzcd}[column sep = 0]
{} &{\paramT}\arrow{rrrrrr}{^L\eta\circ} &{} &{} &{} &{} &{} &{} &{\param} &{}\\[-30pt]
{} &{}\arrow[leftrightarrow]{d} &{} &{} &{} &{} &{} &{} &{}\arrow[leftrightarrow]{d} &{}\\
{} &{} &{} &{} &{} &{} &{} &{} &{}\\[-20pt]
{({\ST},} &{{\jT},}\arrow[rounded corners, to path={ 
      --node[left]{\scriptsize$\widehat{\eta}\circ$} ([xshift=-10ex]\tikztotarget.west)   
      -- (\tikztotarget.west)}]{drrr}{\widehat{\eta}\circ} &{\chi_0,} &{{\thetaT})}\arrow[bend left = 20]{rrrrrrr}{\circ \eta_S} &{} &{} &{} & {({S},} \arrow[bend left = 20]{lllllll}{\eta_S} &{{\jhat},}\arrow[rounded corners, to path={ 
      --node[right]{\scriptsize$\circ\widehat{\eta}$} ([xshift=10ex]\tikztotarget.east)   
      -- (\tikztotarget.east)}]{dllll}  &{\chi_0,} &{{\theta})}\\
{} &{} &{} &{} &{=} &{} &{} &{} &{} &{}\\
\end{tikzcd}
\end{center}
\caption{Summary of the relationship between the supercuspidal $L$-packet data associated to $\paramT$ and $\param$.}
\label{fig:summaryData}
\end{figure}


The proof of this theorem will be divided into three propositions. Proposition~\ref{prop:SSprime} will give the relationship between the tori, and consequently the embeddings and $\chi$-data. Proposition~\ref{prop:characters} will give the relationship between the characters and Proposition \ref{prop:embeddings} will provide the statement regarding the sets $J(F)$ and $\JT(F)$.

\begin{proposition}\label{prop:SSprime}
Let $\ST$ and $S$ be as in Theorem~\ref{th:data}. Then $\widehat{\eta}(\widehat{\ST}) \subset \widehat{S}$.
\end{proposition}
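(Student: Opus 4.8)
The plan is to trace through the construction of the tori $\widehat{S}$ and $\widehat{\ST}$ recalled just above, and show that $\widehat{\eta}$ carries the latter into the former. Recall that $\widehat{\ST} = \Cent(\widehat{C_{\ST}},\widehat{M_{\ST}})$ where $\widehat{M_{\ST}} = \Cent(\paramT(P_F),\widehat{\GT})^\circ$ and $\widehat{C_{\ST}} = \Cent(\paramT(I_F),\widehat{\GT})^\circ$, and similarly for $\widehat{S}$ using $\param = {^L\eta}\circ\paramT$ in place of $\paramT$ and $\widehat{G}$ in place of $\widehat{\GT}$. Since $\param = \widehat{\eta}\circ\paramT$ on the level of the maps into $\widehat{G}$ (the $W_F$-coordinate is untouched by ${^L\eta}$), the image groups satisfy $\param(P_F) = \widehat{\eta}(\paramT(P_F))$ and $\param(I_F) = \widehat{\eta}(\paramT(I_F))$.

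First I would establish the behaviour of centralizers under $\widehat{\eta}$. The key point is that $\widehat{\eta}\colon \widehat{\GT}\to\widehat{G}$ has abelian kernel and cokernel (this is noted in the excerpt just before Theorem~\ref{th:data}): the kernel is central in $\widehat{\GT}$ and the image $\widehat{\eta}(\widehat{\GT})$ contains $[\widehat{G},\widehat{G}]$. From these facts one shows that for any subset $A\subset\widehat{\GT}$, $\widehat{\eta}(\Cent(A,\widehat{\GT})^\circ)\subset \Cent(\widehat{\eta}(A),\widehat{G})^\circ$: indeed an element of $\widehat{\GT}$ centralizes $A$ iff its image centralizes $\widehat{\eta}(A)$ modulo the central kernel, and passing to identity components together with the fact that $\widehat{\eta}$ is a morphism of algebraic groups gives the inclusion; surjectivity onto the derived group then actually upgrades this to show $\widehat{\eta}(\Cent(A,\widehat{\GT})^\circ)$ and $\Cent(\widehat{\eta}(A),\widehat{G})^\circ$ have the same derived subgroup, but the inclusion is all that is needed here. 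Applying this with $A = \paramT(P_F)$ gives $\widehat{\eta}(\widehat{M_{\ST}})\subset \widehat{M_S}$, and with $A = \paramT(I_F)$ gives $\widehat{\eta}(\widehat{C_{\ST}})\subset\widehat{C_S}$.

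Next, I would combine these to handle the iterated centralizer $\widehat{\ST} = \Cent(\widehat{C_{\ST}},\widehat{M_{\ST}})$. One wants $\widehat{\eta}(\widehat{\ST})\subset\Cent(\widehat{C_S},\widehat{M_S}) = \widehat{S}$. An element $s\in\widehat{\ST}$ lies in $\widehat{M_{\ST}}$ and centralizes $\widehat{C_{\ST}}$; hence $\widehat{\eta}(s)\in\widehat{M_S}$ and $\widehat{\eta}(s)$ centralizes $\widehat{\eta}(\widehat{C_{\ST}})$. To conclude $\widehat{\eta}(s)$ centralizes all of $\widehat{C_S}$, I would use that $\widehat{C_S} = \Cent(\param(I_F),\widehat{G})^\circ$ is a torus (by the cited lemma), that $\widehat{\eta}(\widehat{C_{\ST}})$ is a subtorus of it, and that $\widehat{S}$ being a \emph{maximal} torus of $\widehat{G}$ forces the centralizer in $\widehat{M_S}$ of $\widehat{\eta}(\widehat{C_{\ST}})$ to coincide with $\widehat{S}$ — this is where I expect the main subtlety, namely verifying that the torus $\widehat{\eta}(\widehat{C_{\ST}})$ is ``large enough'' inside $\widehat{C_S}$ that its centralizer in $\widehat{M_S}$ is no bigger than $\widehat{C_S}$'s, equivalently that $\widehat{S} = \Cent(\widehat{\eta}(\widehat{C_{\ST}}),\widehat{M_S})$. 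The cleanest route is probably to observe that $\widehat{\eta}$ restricts to an isogeny (or at least a map with central kernel and derived-group-surjective image) $\widehat{\GT}\to\widehat{G}$, so that $\widehat{C_S}$ and $\widehat{\eta}(\widehat{C_{\ST}})$ differ only by a central torus of $\widehat{G}$, which lies in every maximal torus and in particular acts trivially by conjugation; hence the two centralizers agree. This reduces everything to the isogeny-type statement about how maximal tori, Levi subgroups, and these specific centralizers transport across $\widehat{\eta}$, which is exactly the kind of structural fact the paper has already been using (compare the identification of root systems $R(G,T)\cong R(\GT,\tilde T)$ noted in the notation section). With $\widehat{\eta}(\widehat{\ST})\subset\widehat{S}$ in hand, the proposition is complete.
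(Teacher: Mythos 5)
Your plan correctly identifies the structure of $\widehat{S}$ and $\widehat{\ST}$ as iterated centralizers, and your first step (that $\widehat{\eta}$ carries $\Cent(A,\widehat{\GT})^\circ$ into $\Cent(\widehat{\eta}(A),\widehat{G})^\circ$) is sound and gives $\widehat{\eta}(\MT)\subset\widehat{M}$ and $\widehat{\eta}(\CT)\subset\widehat{C}$. However, there is a genuine gap at exactly the spot you flag as the ``main subtlety.'' You reduce the problem to the claim that $\widehat{C}$ and $\widehat{\eta}(\CT)$ differ only by the central torus $Z(\widehat{G})^\circ$ — equivalently, $\widehat{C}\cap\widehat{\eta}(\widehat{\GT})\subset\widehat{\eta}(\CT)$ — but you never prove it; the phrase ``The cleanest route is probably to observe\dots so that\dots'' asserts the conclusion rather than establishing it. The one-sided inclusion $\widehat{\eta}(\CT)\subset\widehat{C}$ from your Step 1 does not by itself give this: an element $g$ of $\widehat{C}\cap\widehat{\eta}(\widehat{\GT})$ lifts to some $g'\in\widehat{\GT}$ that a priori only centralizes $\paramT(I_F)$ modulo the finite central kernel of $\widehat{\eta}$, and one has to argue (e.g.\ via connectedness, or — as the paper does — via an explicit root-subgroup description of the Levis and Proposition~\ref{lem:centralizers}) that this is actually honest centralization.

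For comparison, the paper's proof makes exactly this step explicit: it proves the \emph{equalities} $\widehat{\eta}(\MT)=\widehat{M}\cap\widehat{\eta}(\widehat{\GT})$ and $\widehat{\eta}(\CT)=\widehat{C}\cap\widehat{\eta}(\widehat{\GT})$ by describing $\widehat{M}$ and $\MT$ as groups generated by a maximal torus and root subgroups (using \cite[Theorem 2.2]{Humphreys:Conjugacy} and the canonical identification of root systems under $\widehat{\eta}$), then invokes Proposition~\ref{lem:centralizers} — the root-theoretic criterion for a torus to equal a centralizer — to get $\widehat{\eta}(\widehat{\ST})=\Cent(\widehat{\eta}(\CT),\widehat{\eta}(\MT))$. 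From there the paper argues \emph{in the opposite direction} to your plan: it shows $\widehat{S}\cap\widehat{\eta}(\widehat{\GT})\subset\widehat{\eta}(\widehat{\ST})$ and then concludes equality because both sides are maximal tori of $\widehat{\eta}(\widehat{\GT})$. Your element-wise approach ($s\in\widehat{\ST}\Rightarrow\widehat{\eta}(s)\in\widehat{S}$) is conceptually simpler, but notice that it needs essentially the same structural input (the equality, not merely the inclusion, for $\widehat{C}$), so it is not actually more elementary once spelled out. If you wish to salvage your version, you should prove the equality $\widehat{\eta}(\CT)=\widehat{C}\cap\widehat{\eta}(\widehat{\GT})$ explicitly — either by the paper's root-subgroup description, or by an argument that a connected component of the fibre over $\widehat{C}\cap\widehat{\eta}(\widehat{\GT})$ maps to the finite group $\ker\widehat{\eta}$ under $g'\mapsto g'\paramT(w)g'^{-1}\paramT(w)^{-1}$ and is therefore trivial — and only then conclude that the centralizers in $\widehat{M}$ coincide.
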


In order to prove this proposition, we will make use of the following result, which can be shown using \cite[Theorem 2.2]{Humphreys:Conjugacy} and the presentation of reductive groups in terms of generators from \cite[Theorem 26.3]{Humphreys:LAG}.

\begin{proposition}\label{lem:centralizers}
Let $T$ be a maximal torus of a connected reductive group $G'$, and assume $H$ is a subtorus of $T$. Then $T = \Cent(H,G')$ if and only if for every $\alpha\in R(G',T)$ there exists $h_\alpha\in H$ such that $\alpha(h_\alpha)\neq 1$.
\end{proposition}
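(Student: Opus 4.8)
\textbf{Proof plan for Proposition~\ref{lem:centralizers}.} The plan is to prove both implications directly from the structure theory of connected reductive groups, using the root subgroup presentation. Recall that for a maximal torus $T\subset G'$ with root system $R(G',T)$, the group $G'$ is generated by $T$ together with the root subgroups $U_\alpha$, $\alpha\in R(G',T)$, and the torus $H$ acts on each $U_\alpha$ through the character $\alpha|_H$: for $h\in H$ and $u\in U_\alpha$, $hu h^{-1} = $ (the image of $u$ under the scaling by $\alpha(h)$), using the isomorphism $U_\alpha\simeq \mathbb{G}_a$.

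For the forward direction, suppose $T=\Cent(H,G')$ and assume for contradiction that there is $\alpha\in R(G',T)$ with $\alpha(h)=1$ for all $h\in H$. Then $H$ centralizes every element of $U_\alpha$, so $U_\alpha\subset \Cent(H,G')=T$, contradicting the fact that $U_\alpha$ is a nontrivial unipotent subgroup while $T$ is a torus. Hence such an $h_\alpha$ exists for each $\alpha$.

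For the reverse direction, suppose that for each $\alpha\in R(G',T)$ there is $h_\alpha\in H$ with $\alpha(h_\alpha)\neq 1$. Clearly $T\subset\Cent(H,G')$ since $H\subset T$ and $T$ is abelian. For the opposite inclusion, note that $\Cent(H,G')$ is a closed subgroup of $G'$ containing the maximal torus $T$; moreover $H$ is a torus, so by \cite[Theorem 2.2]{Humphreys:Conjugacy} its centralizer $\Cent(H,G')$ is connected and reductive, and it contains $T$ as a maximal torus, hence is generated by $T$ and those root subgroups $U_\alpha$ it contains. But if $U_\alpha\subset\Cent(H,G')$ then conjugation by $h_\alpha$ fixes $U_\alpha$ pointwise, forcing $\alpha(h_\alpha)=1$, contrary to hypothesis. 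Therefore $\Cent(H,G')$ contains no root subgroup, so $\Cent(H,G')=T$. Alternatively, and more in line with the cited presentation \cite[Theorem 26.3]{Humphreys:LAG}: writing any $g\in\Cent(H,G')$ in terms of $T$ and the $U_\alpha$ via the Bruhat-type decomposition and comparing the $H$-conjugation action term by term shows that no $U_\alpha$-component can survive, again yielding $g\in T$.

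The main obstacle is purely expository: one must invoke the correct statement that the centralizer of a subtorus of a maximal torus is itself a connected reductive group with the same maximal torus and with root system a subset of $R(G',T)$ — precisely those $\alpha$ trivial on $H$ — and then argue that the triviality-free hypothesis kills all such roots. Everything else (the action of $H$ on $U_\alpha$ through $\alpha$, the abelianness of $T$, the nontriviality of $U_\alpha$) is routine, so no serious computation is required.
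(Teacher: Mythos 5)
Your argument is correct and follows essentially the same route the paper intends: the paper gives no written proof, only citing the centralizer description of \cite[Theorem 2.2]{Humphreys:Conjugacy} and the generation of a reductive group by $T$ and its root subgroups \cite[Theorem 26.3]{Humphreys:LAG}, which are exactly the facts you use (the $H$-action on $U_\alpha$ through $\alpha$ for the forward direction, and the fact that the connected centralizer of the torus $H$ is generated by $T$ and the $U_\alpha$ with $\alpha|_H$ trivial for the converse). The only quibble is bibliographic: connectedness of the centralizer of a torus is not literally \cite[Theorem 2.2]{Humphreys:Conjugacy} (which treats a single semisimple element), but this does not affect the argument.
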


%


\begin{proof}[Proof of Proposition~\ref{prop:SSprime}] Recall that $\widehat{S} = \Cent(\widehat{C},\widehat{M})$, where $\widehat{M} = \Cent(\param(P_F),\widehat{G})^\circ$ and $\widehat{C} = \Cent(\param(I_F),\widehat{G})^\circ$. Similarly, we have $\widehat{\ST} = \Cent(\CT,\MT)$, where $\MT = \Cent(\paramT(P_F),\widehat{\GT})^\circ$ and $\CT = \Cent(\paramT(I_F),\widehat{\GT})^\circ$.
We start by showing that $\widehat{\eta}(\MT) = \widehat{M}\cap \widehat{\eta}(\widehat{\GT})$. We have that $\paramT(P_F)$ is contained in some maximal torus $\widehat{\underT{\mathcal{T}}}$ of $\widehat{\GT}$ \cite[Lemma 4.1.3]{Kaletha:SCPackets}, and therefore $\param(P_F)$ is contained in the maximal torus $\widehat{\eta}(\widehat{\underT{\mathcal{T}}})$ of $\widehat{\eta}(\widehat{\GT})$. Since $\widehat{\eta}(\widehat{\GT})$ contains $[\widehat{G},\widehat{G}]$, we have that $\widehat{\eta}(\widehat{\underT{\mathcal{T}}}) = \widehat{\mathcal{T}} \cap \widehat{\eta}(\widehat{\GT})$ for some maximal torus $\widehat{\mathcal{T}}$ of $\widehat{G}$ \cite[Theorem 2.2]{thesisPaper}. By definition, we have
$$\MT = \mathrm{Cent}(\paramT(P_F),\widehat{\GT})^\circ = \left(\underset{s\in\paramT(P_F)}{\bigcap}\mathrm{Cent}(s,\widehat{\GT})\right)^\circ.$$ Using the description from \cite[Theorem 2.2]{Humphreys:Conjugacy} for each set $\mathrm{Cent}(s,\widehat{\GT}), s\in\paramT(P_F)$, it follows that
$$\MT = \langle \widehat{\underT{\mathcal{T}}}, U_\beta: \beta(s) = 1 \text{ for all } s\in\paramT(P_F)\rangle,$$ where $U_\alpha$ denotes the root subgroup of $\widehat{\GT}$ associated to the root $\alpha\in R(\widehat{\GT},\widehat{\underT{\mathcal{T}}})$.

Using a similar argument, and given that the root systems of $\widehat{\GT}$ and $\widehat{G}$ are canonically identified, we have that $$\widehat{M} = \langle \widehat{\mathcal{T}}, \widehat{\eta}(U_\beta) : \beta(s)=1 \text{ for all }s\in\param(P_F) \rangle.$$ We then deduce from \cite[Section 2B]{thesisPaper} that $\widehat{\eta}(\MT) = \widehat{M} \cap \widehat{\eta}(\widehat{\GT})$. Analogously, one has $\widehat{\eta}(\CT) = \widehat{C}\cap \widehat{\eta}(\widehat{\GT})$.

Since $\widehat{\ST} = \mathrm{Cent}(\CT,\MT)$ it follows from Proposition~\ref{lem:centralizers} that there exists $c_\alpha\in\CT$ such that $\alpha(c_\alpha)\neq 1$ for all $\alpha\in R(\MT,\widehat{\ST})$. Applying $\widehat{\eta}$, we have that for all $\alpha\in R(\widehat{\eta}(\MT),\widehat{\eta}(\widehat{\ST}))$, there exists $\widehat{\eta}(c_\alpha) \in \widehat{\eta}(\CT)$ such that $\alpha({\widehat{\eta}(c_\alpha)})\neq 1$. Reapplying Proposition~\ref{lem:centralizers}, we obtain $\widehat{\eta}(\widehat{\ST}) = \Cent(\widehat{\eta}(\CT), \widehat{\eta}(\MT))$.  It follows that 
$$\widehat{S} \cap \widehat{\eta}(\widehat{\GT}) = \Cent(\widehat{C},\widehat{M})\cap \widehat{\eta}(\widehat{\GT}) = \Cent(\widehat{C},\widehat{\eta}(\MT)) \subset \Cent(\widehat{\eta}(\CT),\widehat{\eta}(\MT)) = \widehat{\eta}(\widehat{\ST}).$$ Since both $\widehat{S}\cap \widehat{\eta}(\widehat{\GT})$ and $\widehat{\eta}(\widehat{\ST})$ are maximal tori of $\widehat{\eta}(\widehat{\GT})$, we conclude that they are equal, and therefore $\widehat{\eta}(\widehat{\ST})\subset \widehat{S}$.

\end{proof}

Having $\widehat{\eta}(\widehat{\ST}) \subset \widehat{S}$ implies that the root systems $R(\widehat{\GT},\widehat{\ST})$ and $R(\widehat{G},\widehat{S})$, together with their $\Gamma$-actions, are canonically identified, which allows us to choose $\chi_0 = \chiT$, as the $\chi$-data are parameterized by roots. Furthermore,  $\jT: \widehat{\ST} \rightarrow \widehat{\GT}$ and $\jhat: \widehat{S} \rightarrow \widehat{G}$ are simply inclusions. This means we have the following commutative diagram:

\begin{center}
\begin{tikzcd}
\widehat{\ST} \arrow[rightarrow]{r}{\jT} \arrow[rightarrow]{d}{\widehat{\eta}}
  & \widehat{\GT}\arrow[rightarrow]{d}{\widehat{\eta}} \\
\widehat{S} \arrow[rightarrow]{r}{\jhat}
  &  \widehat{G}
\end{tikzcd}
\end{center}

\begin{proposition}\label{prop:characters}
Let $\theta, \thetaT$ and $\eta_S$ be as in Theorem~\ref{th:data}. Then, $\theta = \thetaT\circ\eta_S$.
\end{proposition}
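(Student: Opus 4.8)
The plan is to trace through the construction of $\theta$ and $\thetaT$ from $\param$ and $\paramT$ via the LLC for tori, and show that the relation $\param = {^L\eta}\circ\paramT$ forces $\theta = \thetaT\circ\eta'$. First I would recall from the construction recipe in Section~\ref{sec:constructPackets} that $\theta$ is obtained as follows: the $\chi$-data $\chi_0$ extends $\hat j$ to an $L$-embedding $^Lj: {^LS}\to {^LG}$, the image of $^Lj$ contains the image of $\param$, so $\param = {^Lj}\circ\param_S$ for a unique $L$-parameter $\param_S: W_F \to {^LS}$, and $\theta$ is the character of $S(F)$ attached to $\param_S$ by the LLC for tori. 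The analogous statement holds for $\thetaT$ with $\paramT = {^L\tilde j}\circ\paramT_{\ST}$. Since $\eta'$ is the dual map of $\widehat{\eta}|_{\widehat{\ST}}:\widehat{\ST}\to\widehat{S}$, functoriality of the LLC for tori (which is just Pontryagin-type duality / the Langlands correspondence for tori applied to the morphism $\eta': S\to\ST$ dual to $\widehat{\eta}|_{\widehat{\ST}}$, and is compatible with composition with ${^L\eta'} = {^L(\widehat{\eta}|_{\widehat{\ST}})^\vee}$) reduces the claim to the identity of $L$-parameters $\paramT_{\ST} = {^L\eta'}\circ\param_S$, or rather $\param_S = {^L(\eta')}\circ$ — more precisely, that $\param_S$ and $\paramT_{\ST}$ are related by the map ${^L\ST}\to{^LS}$ induced by $\widehat{\eta}|_{\widehat{\ST}}$.

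The key step is therefore to verify that the square relating the $L$-embeddings commutes: namely that ${^L\eta}\circ{^L\tilde j} = {^Lj}\circ(\widehat{\eta}|_{\widehat{\ST}}, \mathrm{id})$ as maps ${^L\ST}\to{^LG}$. On the level of dual groups this is exactly the commutative diagram displayed just before the proposition, $\widehat{\eta}\circ\jT = \widehat{j}\circ\widehat{\eta}$, which has already been established. What remains is to check that the extensions to $L$-groups determined by the $\chi$-data are compatible, i.e. that the $1$-cocycle $W_F\to\widehat{S}$ (resp. $W_F\to\widehat{\ST}$) built from $\chi_0$ via the Langlands--Shelstad recipe of \cite[Section 2.6]{LS:1987} transports correctly under $\widehat{\eta}$. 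Since $\chiT = \chi_0$ and the root systems together with their $\Gamma$-actions are canonically identified (a consequence of $\widehat{\eta}(\widehat{\ST})\subset\widehat S$, already noted), the Langlands--Shelstad cocycle for ${^L\tilde j}$ maps under $\widehat{\eta}$ to the one for $^Lj$; this is a direct comparison of the explicit formulas, using that the gauge/$a$-data and the $\chi$-data are literally the same. Hence the square of $L$-embeddings commutes.

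Given commutativity of that square, one computes
\begin{align*}
{^Lj}\circ({^L\eta'}\circ\paramT_{\ST}) &= ({^L\eta}\circ{^L\tilde j})\circ\paramT_{\ST} = {^L\eta}\circ\paramT = \param = {^Lj}\circ\param_S,
\end{align*}
and since $^Lj$ is injective this yields $\param_S = {^L\eta'}\circ\paramT_{\ST}$, where $^L\eta'$ is the $L$-embedding induced by $\widehat{\eta}|_{\widehat{\ST}}$. Applying the LLC for tori, which is functorial for the morphism $\eta': S\to\ST$ and carries composition with $^L\eta'$ to precomposition with $\eta'$, we obtain $\theta = \thetaT\circ\eta'$, as desired.

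The main obstacle I anticipate is the bookkeeping in the second step: showing the $\chi$-data-induced $L$-embeddings are genuinely intertwined by $\widehat{\eta}$. This requires recalling the precise Langlands--Shelstad formula for the cocycle attached to $\chi$-data, checking that the decomposition of $R(G,S)$ into $\Gamma$-orbits (and the associated induced tori / transfer of the $\chi_\alpha$) is preserved under the identification of root data, and confirming there is no discrepancy coming from the fact that $\widehat{\eta}$ need not be surjective or injective on the full dual groups (it is, however, an isomorphism $\widehat{\ST}\to\widehat{\eta}(\widehat{\ST}) = \widehat S\cap\widehat{\eta}(\widehat{\GT})$, which is all that is used). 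Everything else — injectivity of $^Lj$, functoriality of the LLC for tori — is standard and can be cited.
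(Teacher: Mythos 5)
Your proof follows essentially the same route as the paper: extend the commutative square $\widehat{\eta}\circ\jT = \widehat{j}\circ\widehat{\eta}$ to a commutative square of $L$-embeddings using the (shared) $\chi$-data, factor $\param$ and $\paramT$ through $^Lj$ and $^L\tilde{j}$, use injectivity of $^Lj$ to deduce that $\param_S$ is the pushforward of $\param_{\ST}$, and conclude by functoriality of the LLC for tori. The one difference is cosmetic: the paper asserts the commutativity of the $L$-group square in a single sentence, whereas you flag the transport of the Langlands--Shelstad cocycle under $\widehat{\eta}$ as a point requiring verification — a reasonable extra caution, justified exactly as you indicate by the identification of root systems with their $\Gamma$-actions and the equality $\chiT=\chi_0$.
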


\begin{proof}
Using the $\chi$-data as in \cite[Section 2.6]{LS:1987}, the above diagram extends into another commutative diagram:
\begin{center}
\begin{tikzcd}
\widehat{\ST}\rtimes W_F \arrow[rightarrow]{r}{^L\underT{j}} \arrow[rightarrow]{d}{^L\eta}
  & \widehat{\GT}\rtimes W_F\arrow[rightarrow]{d}{^L\eta} \\
\widehat{S}\rtimes W_F \arrow[rightarrow]{r}{^Lj}
  &  \widehat{G}\rtimes W_F
\end{tikzcd},
\end{center}
where $^L\eta(g,w) = (\widehat{\eta}(g),w)$ for all $g\in\widehat{\GT}, w\in W_F$.  

Following \cite[Proposition 4.1.8]{Kaletha:SCPackets}, $\mathrm{Im}\paramT\subset \mathrm{Im}{^L\underT{j}}$ and $\mathrm{Im}\phi\subset\mathrm{Im}{^Lj}$, meaning that $\paramT = {^L\underT{j}\circ\param_{\ST}}$ and $\param = {^Lj\circ\param_{S}}$ for some $L$-parameters $\param_{\ST}$ and $\param_S$ of $\ST$ and $S$, respectively. We claim that $\param_{S} = {^L\eta}\circ\param_{\ST}$. Indeed, by definition, $\param = {^L\eta}\circ\paramT$, which implies $^Lj\circ\param_{S} = {^L\eta}\circ {^L\underT{j}}\circ \param_{\ST}$. Using the commutative diagram above, it follows that $^Lj\circ\param_{S} = {^Lj}\circ {^L\eta}\circ\param_{\ST}$. Given that $^Lj$ is an embedding, it is injective by definition, which implies that $\param_{S} = {^L\eta}\circ\param_{\ST}$ as claimed.

By definition, $\thetaT$ and $\theta$ are the characters which correspond to $\param_{\ST}$ and $\param_{S}$, respectively, under the LLC for tori. Since $L$-packets of tori are singletons, we apply the functoriality property for the LLC of tori to conclude that $\theta = \thetaT\circ\eta_S$.
\end{proof}

We now arrive to the final statement of Theorem~\ref{th:data} which
matches the embeddings in $J(F)$ and $\JT(F)$.   The description
of these embeddings depends on our fixed pinnings.

\begin{proposition}\label{prop:embeddings}
For all $\underT{j}\in \JT(F)$, there exists $j\in J(F)$ such that $\eta(jS) \subset \underT{j}\ST$ and ${j\theta = \underT{j}\thetaT\circ\eta}$.
\end{proposition}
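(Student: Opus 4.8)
The plan is to transport the relationship between the dual groups and $L$-parameters, already established in Propositions~\ref{prop:SSprime} and \ref{prop:characters}, into a statement about the admissible embeddings on the Galois side. Recall that the admissible embeddings $S \to G$ (resp.\ $\ST \to \GT$) are obtained from the $\Gamma$-stable $\widehat{G}$-conjugacy class $\widehat{J}$ of $\widehat{j}$ (resp.\ the $\Gamma$-stable $\widehat{\GT}$-conjugacy class of $\jT$) via the recipe in \cite[Section 5.1]{Kaletha:Regular}. Since $\widehat\eta(\widehat{\ST}) \subset \widehat S$ and the two conjugacy classes are identified under $\widehat\eta$ (the commutative square just before Proposition~\ref{prop:characters}), the first task is to check that the combinatorial construction of admissible embeddings is compatible with $\widehat\eta$: an admissible embedding $j\colon S \to G$ restricts/descends to an admissible embedding $\eta(S)$-side exactly when the dual data match up. Concretely, I would fix $\tilde j \in \JT(F)$, lift it to an actual embedding $\tilde\jmath\colon \ST \to \GT$ defined over $F$, and use the already-proven torus relation together with \cite[Theorem 2.2]{arXivPaper} (which produces, from the subtorus $\eta(\tilde\jmath\ST) \subset \GT$, a maximal torus of $G$ whose image is $\eta(\tilde\jmath\ST)\cap\eta(G)$) to manufacture a candidate maximal torus $jS \subset G$ with $\eta(jS) \subset \tilde j\ST$.

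The second task is to verify that this candidate $j$ is actually \emph{admissible}, i.e.\ that its $G(\overline F)$-conjugacy class is the class $J$ attached to $\widehat j$, and that it can be chosen defined over $F$. For admissibility: the conjugacy class of an admissible embedding is pinned down by the $\widehat G$-conjugacy class $\widehat J$ of $\widehat j$ together with the $\chi$-data, via the cohomological construction of \cite[Section 5.1]{Kaletha:Regular}. Because $R(\widehat{\GT},\widehat{\ST})$ and $R(\widehat G,\widehat S)$ are canonically identified with their $\Gamma$-actions (Proposition~\ref{prop:SSprime}) and we chose $\chiT = \chi_0$ (so the $\chi$-data agree), the toral invariants controlling admissibility on the $G$-side are precisely the images under $\widehat\eta$ of those on the $\GT$-side; hence the class produced from $\tilde j$ by the above construction is the admissible class $J$. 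For the rationality: having abelian kernel and cokernel, $\eta(G) \supset [\GT,\GT]$, and one argues as in Remark~\ref{rem:definedOverF} — the map $\Ad$ by the transporting element is defined over $F$ because it is so modulo the centre — so $j$ may be taken in $J(F)$. I would phrase this carefully using that $J(F)$ is the set of $G(F)$-conjugacy classes of $F$-rational members of $J$, and that the same transporter works on both sides.

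The third and final task is the character identity $j\theta = \tilde j\thetaT\circ\eta$. Unwinding the definitions from Section~\ref{sec:constructPackets}, $j\theta = \theta\circ j^{-1}\cdot\epsilon_j$ and $\tilde j\thetaT = \thetaT\circ\tilde\jmath^{-1}\cdot\epsilon_{\tilde\jmath}$. Since $\eta(jS) \subset \tilde j\ST$ and $\theta = \thetaT\circ\eta'$ (Proposition~\ref{prop:characters}) with $\eta'$ the dual of $\widehat\eta|_{\widehat\ST}$, compatibility of the admissible-embedding construction with $\widehat\eta$ forces $\tilde\jmath\circ\eta = \eta\circ j$ as maps $S \to \GT$ (after the appropriate identifications), whence $\theta\circ j^{-1} = \thetaT\circ\tilde\jmath^{-1}\circ\eta$ on $\eta(jS)$. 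It then remains to match the $\epsilon$-factors: the character $\epsilon_j$ of \cite[Section 4.1]{FKS} is built from the twisted Levi sequence and the toral invariants of the root system $R(G,jS)$, all of which are canonically identified with their $\GT$-analogues under $\eta$ (as in Lemma~\ref{lem:LeviSeq} and the surrounding discussion), so $\epsilon_j = \epsilon_{\tilde\jmath}\circ\eta$; combining gives the claim.

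The main obstacle I expect is the admissibility/rationality step — verifying that the torus $jS$ produced from $\tilde j$ via \cite[Theorem 2.2]{arXivPaper} really lands in the \emph{correct} $G(F)$-conjugacy class $J(F)$ rather than merely some $G(\overline F)$-conjugate, and doing so requires unwinding Kaletha's cohomological construction of admissible embeddings \cite[Section 5.1]{Kaletha:Regular} far enough to see it commutes with $\widehat\eta$ on the nose (not just up to a possibly obstructed class in $H^1$). The identification of $\chi$-data and root systems from Proposition~\ref{prop:SSprime} is exactly what is needed to kill the potential obstruction, but making this precise — and checking the transporter can be chosen $F$-rational, in the spirit of Remark~\ref{rem:definedOverF} — is the delicate point; the character identity, by contrast, should reduce to bookkeeping once the embeddings are known to intertwine.
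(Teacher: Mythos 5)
Your third step (the character identity) is fine and matches what the paper does: once one knows $\eta\circ j=\tilde{j}\circ\eta'$, the identity $\theta\circ j^{-1}=\thetaT\circ\tilde{j}^{-1}\circ\eta$ follows from Proposition~\ref{prop:characters}, and $\epsilon_j=\epsilon_{\tilde{j}}\circ\eta$ follows because the twisted Levi sequences correspond under $\eta$ and share adjoint groups. But the heart of the proposition is precisely the step you leave unresolved, so there is a genuine gap. First, intersecting $\tilde{j}\ST$ with $\eta(G)$ and invoking \cite[Theorem 2.2]{arXivPaper} only produces a maximal \emph{torus} of $G$; it does not produce an embedding $j\colon S\to G$, and since $\eta$ has kernel $Z$ you cannot simply lift $\tilde{j}\circ\eta'$ through $\eta$ without further argument. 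Second, your admissibility argument is an assertion, not a proof: the class $J$ is obtained from the $\widehat{G}$-conjugacy class $\widehat{J}$ alone, and the $\chi$-data plays no role in its definition (it only enters the extension of $\widehat{j}$ to an $L$-embedding and the normalization of $\theta$), so the mechanism of ``toral invariants controlling admissibility'' that you invoke does not exist in the form stated. Third, the rationality argument ``as in Remark~\ref{rem:definedOverF}'' does not apply: that remark concerns conjugating representations by elements whose adjoint action is $F$-rational, not showing that a specific embedding lies in $J(F)$. You correctly identify all of this as the delicate point, but flagging the obstacle is not the same as overcoming it.

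For comparison, the paper avoids every one of these issues by never trying to recover $j$ from its image torus. It fixes compatible $\Gamma$-invariant pinnings of $G$, $\GT$ and their duals (matched across $\eta$ and $\widehat{\eta}$ via \cite[Theorem 2.2]{arXivPaper}), chooses a base point $\widehat{\tilde{i}}\in\widehat{\JT}$ with $\widehat{\tilde{i}}(\widehat{\ST})=\widehat{\tilde{T}}$ and the corresponding $\widehat{i}=\mathrm{Ad}(\widehat{\eta}(\widehat{g}))\circ\widehat{j}$, so that the base-point embeddings $\tilde{i}$ and $i$ intertwine $\eta$ and $\eta'$ by construction. Given $\tilde{j}\in\JT(F)$ it writes $\tilde{j}=\mathrm{Ad}(\tilde{g})\circ\tilde{i}$, uses $\GT=Z(\GT)^\circ\GZ$ to take $\tilde{g}\in\GZ(\overline{F})$, lifts $\tilde{g}$ to $g\in G(\overline{F})$, and sets $j=\mathrm{Ad}(g)\circ i$: admissibility is then automatic ($j$ is $G(\overline{F})$-conjugate to $i$), and the dual diagram gives $\eta(jS)=\tilde{j}(\eta'(S))\subset\tilde{j}\ST$. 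Rationality is a separate concrete argument: by \cite[Corollary 2.2]{Kottwitz:1982} there is $h$ with $\mathrm{Ad}(h)\circ j$ defined over $F$; the $F$-rationality of $\tilde{j}$ forces $\eta(h^{-1}\sigma(h))\in\tilde{j}\ST$, hence $h^{-1}\sigma(h)\in jS$, and a short cocycle computation yields $\sigma j\sigma^{-1}=j$ for all $\sigma\in\Gamma$, i.e.\ $j\in J(F)$. Some argument of this kind is required before your bookkeeping in the final paragraph can be run.
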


\begin{proof}
Our fixed $\Gamma$-invariant pinnings satisfy $\eta(T) =
\underT{T}\cap\eta(G)$ and $\widehat{\eta}(\widehat{\underT{T}})=
\widehat{T}\cap\widehat{\eta}(\widehat{\GT})$.
Following \cite[Section 5.1]{Kaletha:Regular}, we may describe $\JT$ and $J$ as follows. Choose $\widehat{\underT{i}}$ in $\widehat{\JT}$ such that $\widehat{\underT{i}}(\widehat{\ST}) = \widehat{\underT{T}}$ and define $\underT{i}$ to be the inverse of the isomorphism $\underT{T}\rightarrow \ST$ induced by $\widehat{\underT{i}}$. We have that $\widehat{\underT{i}} = \mathrm{Ad}(\widehat{g})\circ\jT$ for some $\widehat{g} \in \widehat{\GT}$. Let $\widehat{i}\in\widehat{J}$ be defined by $\widehat{i} = \mathrm{Ad}(\widehat{\eta}(\widehat{g}))\circ \jhat$. Since $\widehat{\eta}\circ\jT = \jhat\circ\widehat{\eta}$, we have the following commutative diagram.
\begin{equation}\label{diagram}
\begin{tikzcd}
\widehat{\ST} \arrow[rightarrow]{r}{\widehat{\underT{i}}} \arrow[rightarrow]{d}{\widehat{\eta}}
  & \widehat{\underT{T}}\arrow[rightarrow]{d}{\widehat{\eta}} \\
\widehat{S} \arrow[rightarrow]{r}{\widehat{i}} 
  &  \widehat{T}
\end{tikzcd}
\end{equation}

It follows that 
$$\widehat{T}\cap \widehat{\eta}(\widehat{\GT}) = \widehat{\eta}(\widehat{\underT{T}}) = \widehat{\eta}(\widehat{\underT{i}}(\widehat{\ST})) = \widehat{i}(\widehat{\eta}(\widehat{\ST}))\subset \widehat{i}(\widehat{S}).$$
Since we know $\widehat{i}(\widehat{S})$ has to be a maximal torus of $\widehat{G}$, we conclude from \cite[Theorem 2.2]{thesisPaper} that $\widehat{i}(\widehat{S}) = \widehat{T}$. Therefore, $\JT$ corresponds to the $\GT(F^{\mathrm{sep}})$-conjugacy class of $\underT{i}$, and $J$ corresponds to the $G(F^{\mathrm{sep}})$-conjugacy class of $i$. 

Now, given $\underT{j}\in \JT(F)$, we have that $\underT{j} = \mathrm{Ad}(\underT{g})\circ \underT{i}$ for some $\underT{g}\in \GT(F^{\mathrm{sep}})$. Using the fact that $\GT = Z(\GT)^\circ\GZ$, we may assume without loss of generality that $\underT{g}\in \GZ(F^{\mathrm{sep}})$. Let $g$ be any preimage of $\underT{g}$ in $G(F^{\mathrm{sep}})$ by $\eta$ and set $j = \mathrm{Ad}(g)\circ i$. By taking the dual of diagram~(\ref{diagram}), we have
\begin{equation}\label{diagram2}
\begin{tikzcd}
{\ST} \arrow[rightarrow]{r}{{\underT{i}}}
  & {\underT{T}} \\
{S} \arrow[rightarrow]{r}{{i}} \arrow[rightarrow]{u}{\eta_S}
  &  {T}\arrow[rightarrow]{u}{\eta}
\end{tikzcd}
\end{equation}
Here, $\eta_S$ is the dual map of $\widehat{\eta}|_{\widehat{\ST}}: \widehat{\ST} \rightarrow \widehat{S}$. It follows that
$$\eta(jS) = \eta(g\, iS\, g^{-1}) = \eta(g)\, \eta(iS)\, \eta(g)^{-1} = \underT{g}\, \underT{i}(\eta_S(S))\, \underT{g}^{-1} = \underT{j}(\eta_S(S)) \subset \underT{j}\ST.$$ Since $\underT{j}$ and $\eta$ are defined over $F$, we have that $j\in J(F)$. Indeed, by \cite[Lemma 6.2]{Dillery}, which generalizes \cite[Corollary 2.2]{Kottwitz:1982} to arbitrary local fields, there exists $h\in G(F^{\mathrm{sep}})$ such that $\Ad(h)\circ j$ is defined over $F$. Then $\Ad(\eta(h))\circ\underT{j}$ is also defined over $F$, implying that $\sigma(\Ad(\eta(h))\circ\underT{j})\sigma^{-1}=\Ad(\eta(h))\circ\underT{j}$ for all $\sigma\in\Gamma$.
Equivalently, $\eta(h)^{-1}\sigma(\eta(h)) = \eta(h^{-1}\sigma(h))\in \underT{j}\ST$ for all $\sigma\in\Gamma$. This implies $h^{-1}\sigma(h)\in jS$, and therefore $\Ad(h)\circ j = \Ad(\sigma(h)) \circ j$ for all $\sigma\in\Gamma$. Using the fact that $\Ad(h)\circ j$ is defined over $F$, we rewrite this last equality as $\sigma(\Ad(h)\circ j)\sigma^{-1} = \Ad(\sigma(h))\circ j$ for all $\sigma\in\Gamma$. It follows that $\Ad(\sigma(h))\circ \sigma j\sigma^{-1} = \Ad(\sigma(h))\circ j$, and therefore $\sigma j \sigma^{-1}=j$ for all $\sigma\in\Gamma$.

We now show that $j\theta = \underT{j}\thetaT\circ \eta$. We have that $j\theta = \theta\circ j^{-1}\cdot \epsilon_j$ and $\underT{j}\thetaT = \thetaT\circ \underT{j}^{-1}\cdot \epsilon_{\underT{j}}$, where $\epsilon_j$ and $\epsilon_{\underT{j}}$ are the characters from \cite[Section 4.1]{FKS} which we briefly recalled at the end of Section~\ref{sec:summaryKaletha}. By what precedes, we have that $\eta(jS) \subset \underT{j}\ST$ and $\thetaT\circ \underT{j}^{-1}\circ\eta = \thetaT\circ\eta_S\circ j^{-1} = \theta \circ j^{-1}$. We claim that $\epsilon_j = \epsilon_{\underT{j}}\circ\eta$. Indeed, let $(G^0_j,\dots,G^d_j)$ and $(\GT^0_{\underT{j}},\cdots, \GT^d_{\underT{j}})$ be the twisted Levi sequences obtained from $jS$ and $\underT{j}\ST$, respectively. Recall that $\epsilon_j = \prod\limits_{i=1}^d\epsilon^{G^i_j/G^{i-1}_j}$, where $\epsilon^{G^i_j/G^{i-1}_j}$ is the quadratic character of $K^d_j$ that is trivial on $G^1_j(F)_{y_j,r_0/2}\cdots G^d_j(F)_{y_j,r_d/2}$ and whose restriction to $K^0_j$ is given by $\epsilon^{G^i_j/G^{i-1}_j}_{y_j}$ defined in \cite[Definition 4.1.10]{FKS}. The character $\epsilon^{G^i_j/G^{i-1}_j}_{y_j}$ is essentially just a composition of a sign character constructed from the adjoint groups of $G^i_j$ and $G^{i-1}_j$, and the adjoint map of $G^i_j$. The character $\epsilon_{\underT{j}}$ is defined similarly. Given that $\eta(G^i_j) = \GT^i_{\underT{j}}\cap\eta(G)$ (Lemma~\ref{lem:LeviSeq} and Theorem~\ref{th:HG}), it follows that $G^i_j$ and $\GT^i_{\underT{j}}$ have the same adjoint group and that the adjoint map of $G^i_j$ is the composition of the adjoint map of $\GT^i_{\underT{j}}$ with $\eta$. It follows that  $\epsilon^{G^i_j/G^{i-1}_j}_{y_j} = \epsilon^{\GT^i_{\underT{j}}/\GT^{i-1}_{\underT{j}}}_{\underT{y}_{\underT{j}}} \circ \eta$ for all $1\leq i\leq d$ and therefore $\epsilon_j = \epsilon_{\underT{j}}\circ\eta$. This completes the proof.
\end{proof}

\subsection{The Proof of Theorem~\ref{th:desideratum}}\label{sec:proofDesideratum}


\begin{proof}[Proof of Theorem~\ref{th:desideratum}]
Let $(\ST,\jT,\chi_0,\thetaT)$ and $(S,\jhat,\chi_0,\theta)$ be the supercuspidal $L$-packet data associated to $\paramT$ and $\param$, respectively. By construction of $\Pi_{\paramT}$, we have that $\piT \subset \pi_{(\underT{j}\ST,\underT{j}\thetaT)}$ for some $\underT{j}\in \JT(F)$. By Theorem~\ref{th:data}, there exists $j\in J(F)$ such that $\eta(jS) \subset \underT{j}\ST$ and $j\theta = \underT{j}\thetaT\circ\eta$.  
By Theorem~\ref{th:fullTheorem}, it follows that
$$\piT\circ\eta \subset \pi_{(\underT{j}\ST,\underT{j}\thetaT)}\circ\eta = \underset{\underT{c}\in \underT{C}}{\oplus}\pi_{(jS,j\theta)}\circ \AdT(\underT{c}),$$
where $\underT{C}$ is a set of coset representatives of $\eta(G(F))\setminus \GT(F)/\underT{j}\ST(F)$. Recall that $\AdT(\underT{c}) = \Ad(c),$ where $c\in G$ satisfies $\underT{c} = \eta(c) z$ for some $z\in Z(\GT)$. Following the steps of the construction from Section~\ref{sec:summaryKaletha}, one sees that $\pi_{(jS,j\theta)}\circ\AdT(\underT{c}) = {^{c^{-1}}\pi_{(jS,j\theta)}} \simeq \pi_{({^{c^{-1}}jS}, {^{c^{-1}}j\theta})}$ as a consequence of \cite[Section 5.1.1]{HM:2008}, where $^{c^{-1}}jS = (\mathrm{Ad}(c^{-1})\circ j)S$ and 
\begin{align*}
^{c^{-1}}j\theta &=  j\theta\circ \Ad(c) \\
&= ((\theta\circ j^{-1})\cdot\epsilon_j )\circ \Ad(c) \\
&= \left(\theta\circ j^{-1}\circ\Ad(c)\right) \cdot \left(\epsilon_j\circ\Ad(c)\right)\\
&= \left( \theta\circ (\Ad(c^{-1})\circ j)^{-1}\right) \cdot \epsilon_{\Ad(c^{-1})\circ j}\\
&= (\Ad(c^{-1})\circ j)\theta
\end{align*} Since $\Ad(c^{-1})$ is defined over $F$ (Lemma \ref{lem:AdMap}), $\mathrm{Ad}(c^{-1})\circ j \in J(F)$, and therefore $$[\pi_{(jS,j\theta)}\circ\AdT(\underT{c})] = [\pi_{((\Ad(c^{-1})\circ j)S,(\Ad(c^{-1})\circ j)\theta)}]\subset \Pi_{\param}$$ for all $\underT{c}\in\underT{C}$. Thus, all irreducible components of $\underT{\pi}\circ\eta$ belong to $\Pi_\param$.

\end{proof}

\section{Specializing to Regular Supercuspidal Parameters}\label{sec:specializeReg}

Part of the local Langlands conjectures is a correspondence between the irreducible representations in an $L$-packet $\Pi_\varphi$ and the irreducible representations of the component group of $\mathrm{Cent}(\varphi(W_F), \widehat{G})$ \cite[Conjecture 1.15]{Vogan:1993}. In this section, we review this correspondence for \emph{regular} supercuspidal $L$-parameters.

The regular supercuspidal $L$-parameters \cite[Definition 5.2.3]{Kaletha:Regular} are an important subclass of the supercuspidal $L$-parameters. They are also easier to study, as their corresponding $L$-packets are simpler to describe (cf. (\ref{eq:simplerpacket}) below), and their corresponding component groups are abelian \cite[Lemma 5.3.4]{Kaletha:Regular}.

Under the assumption that $\param$ and $\paramT$ are both regular, we show how to reparameterize the elements of $\Pi_\varphi$ and $\Pi_{\paramT}$ in terms of characters of their respective component groups. From this reparameterization, we obtain an alternate formulation for the decomposition formula for $\underT{\pi}\circ \eta, \underT{\pi}\in \Pi_{\paramT}$, obtained from Theorem \ref{th:fullTheorem}. This reformulation amounts to a proof of \cite[Conjecture 2]{Solleveld:2020} for regular supercuspidal $L$-packets of quasisplit groups (Theorem \ref{th:Solleveld}, Proposition \ref{prop:otherpinning}).

Let us begin this section by describing explicitly the regular supercuspidal $L$-parameters, their corresponding $L$-packet structure, and the relationship between the regularity of $\param$ and $\paramT$.

\subsection{Regular Packets and Conditions for Regularity of $\param$ and $\paramT$}\label{sec:regularParams}

One way to describe the regular supercuspidal $L$-parameters is via the notion of regular supercuspidal $L$-packet data \cite[Definition 5.2.4]{Kaletha:Regular}.  A regular supercuspidal $L$-packet datum of $G$ is a supercuspidal $L$-packet datum $(S,\jhat,\chi_0,\theta)$ (cf. Definition \ref{def:packetdatum}), with the stronger condition that $(S,\theta)$ is an extra regular elliptic pair in the sense of \cite[Definition 3.7.5]{Kaletha:Regular}. In particular, this means that the character $\theta|_{S(F)_0}$ has trivial stabilizer for the action of $\Omega(S,G)(F) := (N_G(S)/S)(F)$. 

By \cite[Proposition 5.2.7]{Kaletha:Regular}, there is a one-to-one
correspondence between the $\widehat{G}$-conjugacy classes of regular supercuspidal $L$-parameters for $G$ and isomorphism classes of regular supercuspidal $L$-packet data. Given a regular supercuspidal $L$-parameter $\varphi$ of $G$ with associated regular supercuspidal $L$-packet datum $(S,\hat{j},\chi_0,\theta)$, the representations $\pi_{(jS,j\theta)}$ are irreducible for all $j\in J(F)$ \cite[Lemma 3.4.20]{Kaletha:Regular}. 
Thus, the corresponding $L$-packet is 
\begin{align}\label{eq:simplerpacket}
\Pi_\varphi = \{ \pi_{(jS,j\theta)} : j\in J(F)\},
\end{align}
where $j$ is identified with its $G(F)$-conjugacy class and $\pi_{(jS,j\theta)}$ is identified with its equivalence class. Furthermore, as stated in \cite[Section 5.3]{Kaletha:Regular} and \cite[Section 4.2]{Kaletha:SCPackets}, the elements of $\Pi_\varphi$ are in one-to-one correspondence with the elements of $J(F)$. The following lemma is a proof of this statement.

\begin{lemma}\label{lem:simplerpackets}
Let $\varphi$ be a regular supercuspidal $L$-parameter of $G$ with associated regular $L$-packet datum $(S,\hat{j},\chi_0,\theta)$. Then the map $j \mapsto \pi_{(jS,j\theta)}$ induces a bijection
$J(F) \rightarrow \Pi_\varphi.$
\end{lemma}

\begin{proof}
We prove the equivalent statement: $\pi_{(j_1S,j_1\theta)} \simeq \pi_{(j_2S,j_2\theta)}$ if and only if $j_1$ and $j_2$ are $G(F)$-conjugate. 

Assume $\pi_{(j_1S,j_1\theta)} \simeq \pi_{(j_2S,j_2\theta)}$. Then, by \cite[Corollary 3.7.10]{Kaletha:Regular}, there exists $g\in G(F)$ such that $j_1S = \Ad(g)j_2S$ and $j_1\theta = {^gj_2\theta}$. Using \cite[Lemma 3.4.10, Lemma 3.4.12]{Kaletha:Regular}, there exists $j' \in J(F)$ such that $(j'S,j'\theta)$ is extra regular in the sense of \cite[Definition 3.7.5]{Kaletha:Regular}. Now, as in the proof of Proposition \ref{prop:embeddings}, $j_1 = \Ad(h_1)\circ j'$ and $j_2=\Ad(h_2)\circ j'$ for some $h_1,h_2\in G(F^{\mathrm{sep}})$ such that $\Ad(h_1)$ and $\Ad(h_2)$ (as maps of $j'S$) are defined over $F$. Thus, $j_1S = \Ad(g)j_2S$ and $j_1\theta = {^gj_2\theta}$ if and only if $h_1^{-1}gh_2 \in N_G(j'S)$ and $j'\theta = {^{h_1^{-1}gh_2}}j'\theta$. Because $\Ad(h_1^{-1}gh_2)$ is defined over $F$, it is an easy exercise to show that $\sigma(h_1^{-1}gh_2)^{-1}(h_1^{-1}gh_2) \in C_G(j'S) = j'S$ for all $\sigma\in \Gamma$. 
It follows that $(h_1^{-1}gh_2)j'S \in \Omega(j'S,G)(F) = \left(N_G(j'S)/j'S\right)(F)$. By the extra regularity of $j'\theta$, we conclude that $h_1^{-1}gh_2\in j'S$, and therefore $\Ad(h_1^{-1}gh_2)\circ j' = j'$. Thus, $\Ad(g)\circ j_2 = j_1$.

The converse is a direct consequence of \cite[Corollary 3.7.10]{Kaletha:Regular}, and is built into the definition of $\Pi_\param$.
\end{proof}

Suppose as usual that $\underT{\varphi} \in \Phi_{\mathrm{sc}}(\GT)$ and $\varphi = {^L\eta\circ \underT{\varphi}} \in \Phi_{\mathrm{sc}}(G)$. It is natural to ask under what conditions $\varphi$ and $\underT{\varphi}$ are both regular. The following lemma and corollary address this question from the perspective of regular supercuspidal $L$-packet data.

\begin{lemma}\label{lem:regularpairs}
Let $(S,\theta)$ and $(\ST,\thetaT)$ be $F$-non-singular elliptic pairs of $G$ and $\GT$, respectively, satisfying $\eta(S) \subset \ST$ and $\theta = \thetaT \circ \eta$. If $(S,\theta)$ is extra regular, then $(\ST,\thetaT)$ is also extra regular.
\end{lemma}

\begin{corollary}\label{cor:regularparams}
Let $\underT{\varphi}$ be a supercuspidal $L$-parameter of $\GT$ and let $\varphi = {^L\eta}\circ \underT{\varphi}$. If $\varphi$ is regular, then $\underT{\varphi}$ is also regular.
\end{corollary}

Before proving Lemma \ref{lem:regularpairs}, recall from Section \ref{sec:constructPackets} that $S$ is not a subtorus of $G$. Rather, as in \cite[p.1145]{Kaletha:Regular}, we have a structure on $S$ that is given to us by $\jhat$. This means that the action of $\Omega(S,G)$ on $S$ corresponds to the action of $\Omega(T,G)$ twisted by $i:S\rightarrow T$. More precisely, given $w\in \Omega(S,G) = \Omega(T,G)$, $^ws = i^{-1}({^wi(s)})$ for all $s\in S$. Here, $T$ and is the maximal torus from our fixed $\Gamma$-invariant pinning, and $i$ is as per the proof of Proposition \ref{prop:embeddings}. The same is also true of $\ST$, for which we adopt analogous notation. Furthermore, since $\eta(T) = \eta(G)\cap \underT{T}$ and $\eta(G) \supset [\GT,\GT]$, one sees from \cite[Proposition 2.1.24]{thesis} that $\eta$ induces a $\Gamma$-equivariant isomorphism
$$\begin{aligned}
\eta_\Omega: \Omega(T,G) \xrightarrow{\eta} \Omega(\eta(T),\eta(G)) \rightarrow \Omega(\underT{T},\GT)
\end{aligned},$$ which sends $gT$ to $\eta(g)\underT{T}$ for all $g\in N_G(T)$.

\begin{proof}[Proof of Lemma \ref{lem:regularpairs}]
It is clear that the first two conditions in the definition of extra regularity \cite[Definition 3.7.5]{Kaletha:Regular} are satisfied for $(\ST,\thetaT)$ if and only if they are satisfied for $(S,\theta)$. We focus our attention on the third and final condition. That is, we assume that $\theta|_{S(F)_0}$ has trivial stabilizer for the action of $\Omega(S,G)(F)$ and show that $\thetaT|_{\ST(F)_0}$ has trivial stabilizer for the action of $\Omega(\ST,\GT)(F)$.

Recall from Proposition \ref{prop:characters} and (\ref{diagram2}) that $\theta = \tilde{\theta}\circ \eta_S$ and $\underT{i}\circ\eta_S = \eta\circ i$. Using these equalities in combination with the definitions of the actions of $\Omega(S,G)$ and $\Omega(\ST,\GT)$, we obtain 
\begin{align}\label{eq}
^w\theta = {^{\eta_\Omega(w)}\tilde{\theta}\circ \eta_S}  \text{ for all } w\in \Omega(S,G)(F).
\end{align}


Assume that $^{\underT{w}}\thetaT|_{\ST(F)_0} = \thetaT|_{\ST(F)_0}$ for some $\underT{w} \in \Omega(\ST,\GT)(F)$. By the above discussion, $\underT{w} = \eta_\Omega(w)$ for some $w\in \Omega(S,\G)(F)$. Using (\ref{eq}), it follows that
$$\theta|_{S(F)_0} = (\thetaT\circ \eta_S)|_{S(F)_0} = ({^{\underT{w}}}\thetaT\circ \eta_S)|_{S(F)_0} = {^w\theta}|_{S(F)_0}.$$ Given the assumption on $\theta$, we conclude that $w = 1$. Thus $\underT{w}=1$ and $\thetaT|_{\ST(F)_0}$ has trivial stabilizer for the action of $\Omega(\ST,\GT)(F)$.
\end{proof}


The converse of Lemma \ref{lem:regularpairs} and Corollary \ref{cor:regularparams} is not true in general. Consider the case of $G= \mathrm{SL}_2$ and $\underT{G} = \mathrm{GL}_2$, with $\eta$ being the inclusion map. Then, all irreducible supercuspidal representations of $\underT{G}(F)$ are extra regular \cite[Lemma 3.7.7]{Kaletha:Regular}, whereas there exist irreducible supercuspidal representations of $G(F)$ which are not regular (e.g. the four \emph{exceptional} supercuspidal representations from \cite{ADSS}). 
Given one such representation of $G(F)$, say $\pi$, the irreducible components of $\Ind_{G(F)}^{\underT{G}(F)}\pi$ are all extra regular, and thus correspond to extra regular elliptic pairs of $\underT{G}(F)$. The restrictions of these extra regular elliptic pairs to $G(F)$ can therefore not be extra regular (or even regular), as an application of Theorem \ref{th:fullTheorem} would then contradict the non-regularity of $\pi$.

It is worth pointing out that the instances for which the converse holds are not extremely rare. Indeed, assume that $\thetaT$ is extra regular and that $^w\theta|_{S(F)_0} = \theta|_{S(F)_0}$ for some $w\in \Omega(S,G^0)(F)$. Then (\ref{eq}) tells us $^{\eta_\Omega(w)}\thetaT \circ \eta_S|_{S(F)_0} = \thetaT \circ \eta_S|_{S(F)_0},$ or equivalently, $^{\eta_\Omega(w)}\thetaT|_{\eta_S(S(F)_0)} = \thetaT|_{\eta_S(S(F)_0)}.$ In order to conclude that $w=1$ (i.e. $\theta$ is extra regular), we must have $^{\eta_\Omega(w)}\thetaT|_{\ST(F)_0} = \thetaT|_{\ST(F)_0}.$ If the difference between $\ST(F)_0$ and $\eta_S(S(F)_0)$ is centralized by $\Omega(\ST,\GT)(F)$, then we can conclude that $w=1$, and so $\theta$ is extra regular when $\thetaT$ is extra regular. This happens for example if the equality $\underT{T} = Z(\GT) \, \eta(T)$ (or equivalently, $\ST = \underT{i}^{-1}(Z(\GT))\,\eta_S(S)$) remains true at the level of the $F$-points. 

\subsection{Functoriality and the Characters of the Dual Component Group}\label{sec:SolleveldConj}

\subsubsection{Admissible Embeddings and Characters of the Dual Component Group}
\label{embcompgp}

Let $\param \in \PhiSC(G)$ be a regular supercuspidal $L$-parameter. Up until now, the elements of $\Pi_\param$ have been parameterized by regular elliptic pairs. However, in Solleveld's conjecture \cite[Conjecture 2]{Solleveld:2020}, the elements of the packets are parameterized by $\param$ and characters of the associated component group. Thus, the first step for proving Theorem~\ref{th:Solleveld} is to reconcile the two parameterizations.

As discussed in Section \ref{sec:regularParams}, the $L$-parameter
$\varphi$ corresponds to a regular supercuspidal $L$-packet datum $(S,
\jhat, \chi_{0}, \theta)$, and the 
(equivalence classes of) irreducible representations in
$\Pi_{\varphi}$ correspond bijectively to the ($G(F)$-conjugacy
classes of ) admissible embeddings in $J(F)$
\begin{equation}
  \label{diag-1}
  \Pi_{\varphi} \longleftrightarrow J(F).
  \end{equation}

Thus far, there has been no need to specify a particular irreducible
representation or 
admissible embedding in (\ref{diag-1}) as being special.  However, it
shall be necessary 
to specify a particular representation or embedding to correspond to
the identity character of the dual component group.  Kaletha discusses what the canonical choice of representation and embedding should be in some settings \cite[Sections 5.3, 6.2]{Kaletha:Regular}\cite[Lemma 4.2.1]{Kaletha:SCPackets}.  At the time of writing, a canonical choice is not available for regular supercuspidal $L$-packets. For this reason,
we arbitrarily fix $j \in J(F)$ and thereby its corresponding irreducible
representation $\pi_{(jS,j\theta)} \in \Pi_{\varphi}$.  The bijection
in (\ref{diag-1}) is now an isomorphism of pointed sets.

The dual group attached to
$\varphi\in \PhiSC(G)$ is $\mathrm{Cent}(\varphi(W_{F}),
\widehat{G})$, and  according to \cite[Lemma
  5.3.4]{Kaletha:Regular} it is naturally isomorphic to the
fixed-point subgroup $\widehat{S}^{\Gamma}$.   We denote the finite
abelian component group 
$\widehat{S}^{\Gamma}/ (\widehat{S}^{\Gamma})^{\circ}$ by
$\pi_{0}(\widehat{S}^{\Gamma})$, and denote its group of characters by
$\pi_{0}(\widehat{S}^{\Gamma})^{D}$.
What we wish to do here supplement (\ref{diag-1}) with an inclusion
\begin{equation}
  \label{diag0}
  \Pi_{\varphi} \longleftrightarrow J(F) \hookrightarrow
  \pi_{0}(\widehat{S}^{\Gamma})^{D} 
\end{equation}
and to describe the image of this inclusion.

Recall from (\ref{eq:Hmap}) that there is a bijection between $J(F)$ and
$\ker(H^{1}(F,jS) \rightarrow H^{1}(F,G))$. An arbitrary element of 
$\ker(H^{1}(F,jS) \rightarrow H^{1}(F,G))$ is represented by a cocycle
$$z_{g}(\sigma) = g^{-1}\,\sigma(g), \quad\sigma \in
\Gamma,$$
for some $g \in G$.  
By fixing $j \in J(F)$ as we have
above, we obtain a canonical bijection from $\ker(H^{1}(F,jS) \rightarrow
H^{1}(F,G))$ to $J(F)$ given by
\begin{equation}
  \label{canbij}
  z_{g} \mapsto \Ad(g)\circ j.
\end{equation}

The desired inclusion of (\ref{diag0}) is given through this canonical
bijection and the commutative diagram
\begin{equation}
  \label{kottwitz86}
\xymatrix{ H^{1}(F, jS)  \ar[r] \ar[d]&
  \pi_{0}(\widehat{S}^{\Gamma})^{D} \ar[d]\\
 H^{1}(F, G)  \ar[r] &
  \pi_{0}(Z(\widehat{G})^{\Gamma})^{D}
}
\end{equation}
of \cite[Theorem 1.2]{Kottwitz:1986} and \cite[Theorem
  2.1]{Thang:2011}. In this diagram the upper and lower maps are 
bijections which arise from perfect pairings in Tate-Nakayama duality.
The map on the left is given by the inclusion $jS 
\subset G$, and the map on the right is given by restriction to 
$Z(\widehat{G})^{\Gamma} \subset \widehat{S}^{\Gamma}$. 
 Since
(\ref{kottwitz86}) is commutative,  $\ker(H^{1}(F,jS) \rightarrow
 H^{1}(F,G))$ is in bijection with the 
kernel of the restriction map on the right of the diagram.  The kernel of the
restriction map is isomorphic
to $\pi_{0}(\widehat{S}^{\Gamma}/Z(\widehat{G})^{\Gamma})^{D}$.

Combining these observations with (\ref{canbij}), we obtain a
bijection between $J(F)$ and
$\pi_{0}(\widehat{S}^{\Gamma}/Z(\widehat{G})^{\Gamma})^{D}$ given
by the map
$$\Ad(g) \circ j \mapsto \tau_{g}$$
in which $\Ad(g) \circ j$ is a representative of a $G(F)$-conjugacy
class in $J(F)$ and 
$\tau_{g} \in
\pi_{0}(\widehat{S}^{\Gamma}/Z(\widehat{G})^{\Gamma})^{D}$ is obtained
through $z_{g}$ and Tate-Nakayama duality.  

In summary, the desired arrangement of (\ref{diag0}) takes the shape of three bijections
\begin{equation}
  \label{diag1}
\Pi_{\varphi} \longleftrightarrow J(F)  \longleftrightarrow
\left( \ker( H^{1}(F,jS) \rightarrow
H^{1}(F,G)) \right) \longleftrightarrow 
\pi_{0}(\widehat{S}^{\Gamma}/Z(\widehat{G})^{\Gamma})^{D}.
\end{equation}
On the level of elements the bijections have the form
$$\pi_{({^g}jS, {^g}j\theta)} \longleftrightarrow  \Ad(g) \circ j
\longleftrightarrow z_{g}
\longleftrightarrow \tau_{g},$$
where  $g \in G$  and $z_{g} \in Z^{1}(F,jS)$.  
 We can go one step further and obtain an alternative description for $\ker( H^{1}(F,jS) \rightarrow H^{1}(F,G))$ as follows.

Given a maximal torus $S'$ of $G$ which is defined over $F$, we have in particular that $S'$ is a closed subgroup of
$G$. The quotient $G/S'$ is therefore a variety defined
over $F$.
An element $gS' \in G/S'$ belongs to $(G/S')(F)$ if and only if $gS'
= \sigma(g)S'$ for all $\sigma \in \Gamma$.
The group $G(F)$ acts on $(G/S')(F)$ by left-multiplication.  Let
$G(F) \backslash \, (G /S')(F)$ denote the set of $G(F)$-orbits. 

\begin{lemma}
  \label{firstlem}
  Let $gS' \in (G/S')(F)$.  Then the map $z_{g} : \Gamma \rightarrow
  S'$ defined by
  $$z_{g}(\sigma) = g^{-1}\, \sigma(g), \quad \sigma \in \Gamma$$
  is a cocycle in $Z^{1}(F, S')$.  In addition, the map $g \mapsto
  z_{g}$ induces a bijection from $G(F) \backslash \, (G /S')(F)$ to
  $\ker( H^{1}(F,S') \rightarrow H^{1}(F,G))$.
\end{lemma}
\begin{proof}
For any $\sigma \in \Gamma$ we have $gS' = \sigma(g)S'$ and so 
$g^{-1}\sigma(g) \in S'$.  It is elementary to verify that $z_{g}$
satisfies the cocycle condition.  This yields a map from $(G/S')(F)$ to
$H^{1}(F,S')$.  The image of this map lies in $\ker( H^{1}(F,S') \rightarrow H^{1}(F,G))$, as
$z_{g}$ is a coboundary in $H^{1}(F,G)$.  On the other hand any
element in the kernel is a coboundary in $H^{1}(F,G)$ and therefore of
the form $z_{g}$ as above.  Consequently  $(G/S')(F)$ maps
onto  $\ker( H^{1}(F,S') \rightarrow H^{1}(F,G))$.
Finally if $g_{1}S', g_{2}S' \in (G/S')(F)$ and $z_{g_{1}}$ is
equivalent to $z_{g_{2}}$ in $H^{1}(F,S')$ then there exists $s \in S'$
such that
$$g_{1}^{-1} \sigma(g_{1}) = s^{-1}g_{2}^{-1} \sigma(g_{2})\sigma(s),
\quad \sigma \in \Gamma.$$
This in turn implies $g_{2}sg_{1}^{-1} = \sigma(g_{2}sg_{1}^{-1})$ and
$g_{2}sg_{1}^{-1} = g_{3}$ for some $g_{3} \in G(F)$.  It follows that
$g_{3}^{-1} g_{2}S' = g_{1}S'$, which is to say that $g_{1}S'$ and $g_{2}S'$
are in the same $G(F)$-orbit of $G(F) \backslash \, (G /S')(F)$.  This
proves the injectivity of the induced map.
\end{proof}

We may now rewrite (\ref{diag1}) using the bijection of Lemma
 \ref{firstlem} with $S' = jS$.
 \begin{equation}
   \label{diag2}
 \xymatrix{ \Pi_{\varphi} \ar@{<->}[r] & J(F)   \ar@{<->}[r] & G(F)
   \backslash \, (G /jS)(F) 
    \ar@{<->}[r] &
   \pi_{0}(\widehat{S}^{\Gamma}/Z(\widehat{G})^{\Gamma})^{D} \\ 
  \pi_{({^{g^{-1}}}jS, {^{g^{-1}}}j\theta)}   \ar@{<->}[r] &
   \Ad(g) \circ j  \ar@{<->}[r] & g \,jS
    \ar@{<->}[r] & \tau_{g}
   }
 \end{equation}

\subsubsection{A Commutative Diagram}

Assume that $\underT{\varphi}\in \Phi_{\mathrm{sc}}(\GT)$, $\varphi = {^L\eta} \circ \underT{\varphi}$ and that both $\paramT$ and $\param$ are regular. In light of the discussion following Corollary \ref{cor:regularparams}, it is not sufficient to assume that only $\paramT$ is regular.

We begin this section by describing a commutative diagram 

\begin{equation}
  \label{comdiag}
\xymatrix{  \Pi_{\underT{\varphi}} 
  \ar@{<->}[r]  & \underT{J}(F) 
  \ar@{<->}[r]^>(0.7){\mathrm{Ad}(\cdot)\circ 
    \underT{j}} &  \GT(F) \backslash \,
  (\GT / \underT{j}\underT{S})(F)
  \ar@{<->}[r]^<(0.2){\underT{\tau}} 
  &\pi_{0}(\widehat{\underT{S}}^{\Gamma} / Z(\widehat{\GT})^{\Gamma})^{D}
  \\
   \Pi_{\varphi}  \ar@{<->}[r] \ar@{->>}[u] & J(F)
   \ar@{<->}[r]_>(0.7){\mathrm{Ad}(\cdot) \circ j} 
   \ar@{->>}[u]_{\bar{\eta}}  &  G(F) \backslash \, (G / jS)(F)
   \ar@{<->}[r]_<(0.2){\tau}  \ar@{->>}[u]_{\bar{\eta}}
   &\pi_{0}(\widehat{S}^{\Gamma} / Z(\widehat{G})^{\Gamma})^{D}
   \ar@{->>}[u]_{\circ \hat{\eta}}
}
\end{equation}

Once this diagram is in place, the decomposition formula for $\underT{\pi}_{(\underT{j}\ST,\underT{j}\thetaT)}\circ \eta$ that one would obtain by applying Theorem~\ref{th:fullTheorem} may be transferred to the right-hand square of the diagram, which is key for proving Theorem \ref{th:Solleveld}.

The starting point is  the top row of (\ref{comdiag}).  Replacing $G$
with $\GT$ in 
Section \ref{embcompgp}, we fix (the
$\GT(F)$-conjugacy class of) an admissible
embedding $\underT{j} \in \JT(F)$ arbitrarily.  The top row is
then the sequence of bijections in (\ref{diag2}), in which we have
restored the distinction between $\ST$ and
$\underT{j}\ST$. By Proposition 
\ref{prop:embeddings}, there exists $j \in J(F)$ such 
that $\eta(jS) \subset \underT{j}\ST$ and ${j\theta =
  \underT{j}\thetaT\circ\eta}$.   For lack of a canonical choice,
we arbitrarily fix one such $j$.  The bottom row is then given by
(\ref{diag2}).

We continue by describing the middle two vertical maps of (\ref{comdiag}).
Since  $\eta(jS) \subset
\underT{j}\ST$  the map $\bar{\eta}$ sending $g\, jS \in G/jS$ to
$\eta(g)\, \underT{j}\underT{S} \in \GT/ \underT{j}\underT{S}$ is
well-defined.  Furthermore,  $\eta$ is defined over $F$ so that
$$\sigma(\bar{\eta}(g\, jS)) = \eta(\sigma(g))\, \underT{j}\underT{S} =
\bar{\eta}(\sigma (g\,jS)), \quad \sigma \in \Gamma,$$
This means that $\bar{\eta}$ is defined over $F$ and that
$\bar{\eta}$ passes to a map ${G(F) \backslash \, (G/jS)(F) \rightarrow
\GT(F) \backslash \, (\GT/\underT{j}\underT{S})(F)}$.
This defines the second map from the right in (\ref{comdiag}).  We
define the map $J(F) \rightarrow \underT{J}(F)$ to its left as the map
which takes $\Ad(g) \circ j$ to $\Ad(\eta(g)) \circ \underT{j}$.   In
this way the middle square in (\ref{comdiag}) commutes.  The
arguments in the proof of Proposition \ref{prop:embeddings} imply the
surjectivity of the two vertical maps in the middle square.

The vertical map  $\circ \widehat{\eta}$ on the right of (\ref{comdiag}) is defined by
composition with $\widehat{\eta}$ (see Proposition
\ref{prop:SSprime}).  
The commutativity of the right-hand square of (\ref{comdiag}) is explained as follows. The functoriality of \cite[Theorem 1.2]{Kottwitz:1986} and
\cite[Theorem 2.1]{Thang:2011} imply that the following diagram commutes:
$$
\xymatrix{ H^{1}(F, \underT{j}\underT{S})  \ar[r] &
  \pi_{0}(\widehat{\underT{S}}^{\Gamma})^{D} \\
 H^{1}(F, jS)  \ar[r] \ar[u]^{\eta \circ}&
  \pi_{0}(\widehat{S}^{\Gamma})^{D} \ar[u]_{\circ \hat{\eta}}
}
$$
Therefore, the restriction
$$
\xymatrix{ \mathrm{ker}(H^{1}(F, \underT{j}\underT{S})\rightarrow H^1(F,\GT))  \ar[r] &
  \pi_{0}(\widehat{\underT{S}}^{\Gamma}/Z(\hat{\GT})^\Gamma)^{D} \\
 \mathrm{ker}(H^{1}(F, jS) \rightarrow H^1(F,G))  \ar[r] \ar[u]^{\eta \circ}&
  \pi_{0}(\widehat{S}^{\Gamma}/Z(\hat{G})^\Gamma)^{D} \ar[u]_{\circ \hat{\eta}}
}
$$
also commutes. Applying Lemma \ref{firstlem} then gives us the right-hand square of (\ref{comdiag}) as a commutative diagram.
As a consequence, the
surjectivity of  $\circ
\widehat{\eta}$ follows from the surjectivity of $\bar{\eta}$.

The leftmost vertical arrow of (\ref{comdiag}) is defined as the unique map
which makes the leftmost square of (\ref{comdiag}) commute.
It is defined by  
$$ \pi_{({^gj}S, \,{^gj\theta})} \mapsto
\pi_{({^{\eta(g)}\underT{j}}\underT{S},
  \,{^{\eta(g)}\underT{j}\underT{\theta}})}$$
for all $\Ad(g) \circ j \in J(F)$.  This map is surjective, as $\bar{\eta}$
is surjective.  
Given $\Ad(\underT{g})\circ \underT{j}\in \JT(F)$ the preimage of $\pi_{((\Ad(\underT{g}) \circ \underT{j})\ST, (\Ad(\underT{g})\circ \underT{j})\thetaT)} \in \Pi_{\paramT}$
under this map is 
\begin{align}\label{eq:newParam1}
\{\pi_{({^gj}S, \,{^gj\theta})} : \Ad(g) \circ j \in
\bar{\eta}^{-1}(\Ad(\underT{g}) \circ \underT{j}) \}.
\end{align}

\subsubsection{The Proof of Theorem \ref{th:Solleveld}}

The commutative diagram (\ref{comdiag}) from the previous section is going to allow us to rewrite the decomposition formula provided in the proof of Theorem \ref{th:desideratum} with respect to the reparameterization in terms of characters of the component groups.

Let $\param,\paramT, \underT{j},j$ be as in the previous section.
Recall from the proof of Theorem \ref{th:desideratum} that
\begin{align}\label{eq:decompFormula}
\pi_{(\underT{j}\ST,\underT{j}\thetaT)} \circ \eta \simeq \underset{\underT{c} \in \underT{C}}{\oplus} \pi_{((\Ad(c^{-1})\circ j)S,(\Ad(c^{-1})\circ j)\theta)},
\end{align}
where $\underT{C}$ is a set of coset representatives of $\eta(G(F))\setminus \GT(F)/\underT{j}\ST(F)$ and $c\in G$ is such that $\underT{c} = \eta(c)z$ for some $z\in Z(\GT)$. Note                                                                                                                                                                                                                                                                                                                                                                                                                                                                                                                                                               that, by construction, $\Ad(c^{-1})\circ j \in \bar{\eta}^{-1}(\Ad(\underT{c}^{-1})\circ \underT{j})$ and $\Ad(\underT{c}^{-1})\circ \underT{j}$ and $\underT{j}$ belong to the same $\GT(F)$-equivalence class.

The following proposition tells us that the set of representations (\ref{eq:newParam1}) coincides with the irreducible components of the decomposition formula (\ref{eq:decompFormula}).
\begin{proposition}
  \label{leftsquare}
  Suppose $\underT{g} \, \underT{j}\ST \in (\GT/\underT{j}\underT{S})(F)$.  Then
\begin{equation}
  \label{rightsquare}
\pi_{({^{\underT{g}}\underT{j}}\underT{S},
  \,{^{\underT{g}}\underT{j}\underT{\theta}})} \circ \eta \cong \bigoplus_{g\, jS \in
\bar{\eta}^{-1}(\underT{g}\,\underT{j}\underT{S})}  \pi_{({^gj}S,
  \,{^gj\theta})}.
\end{equation}
Equivalently,  an
irreducible representation $\pi$ of $G(F)$ is a subrepresentation 
of $\pi_{({^{\underT{g}}\underT{j}}\underT{S},
  \,{^{\underT{g}}\underT{j}\underT{\theta}})} \circ \eta$ if 
and only if $\pi \cong   \pi_{({^gj}S, \,{^gj\theta})}$  for $g\,jS \in
\bar{\eta}^{-1}(\underT{g}\,\underT{j}\underT{S})$ (\emph{i.e.} $\Ad(g) \circ j$ in
the fibre of 
$\bar{\eta}$ over $\Ad(\underT{g}) \circ \underT{j}$).

\end{proposition}

The key to proving (\ref{rightsquare}) is to identify the fibre over $\underT{j}$ with the set $\eta(G(F))\setminus\GT(F)/\underT{j}\ST(F)$
in (\ref{eq:decompFormula}), which is done via the following lemma.

\begin{lemma}
  \label{lastcor}
Let $\bar{\eta}^{-1}(\underT{j})$ be the fibre of $J(F)
\stackrel{\bar{\eta}}{\rightarrow} \underT{J}(F)$ over $\underT{j}$.
The map of Lemma \ref{firstlem} and the map $\eta$ induce horizontal maps in
the commutative diagram
$$
\xymatrix{ J(F)  \ar[r] &
  G(F) \backslash \, (G /jS)(F) \ar[r]&
  \eta(G(F))  \backslash
  \, (\GT   / \underT{j}\underT{S})(F)\\
 \bar{\eta}^{^{-1}}(\underT{j})  \ar[r] \ar@{^{(}->}[u]&
 G(F)  \backslash \,
 \bar{\eta}^{-1}(\GT(F)\underT{j}\underT{S}/\underT{j}\underT{S})
 \ar@{^{(}->}[u] \ar[r] & 
    \eta(G(F))\backslash \, \GT(F)/ \underT{j}\underT{S}(F)  \ar@{^{(}->}[u]
}
$$
In addition, the horizontal maps are bijections.
\end{lemma}
\begin{proof}
We first prove the assertion for the square on the left.  The upper
map is bijective by Lemma \ref{firstlem} and the vertical maps are
inclusions.  For the lower horizontal map, suppose $\mathrm{Ad}(g)
\circ j \in J$ is a representative 
of some 
$G(F)$-orbit in $J(F)$ where $g\,jS \in (G/jS)(F)$, and $\bar{\eta} ( 
\mathrm{Ad}(g) \circ j) = \mathrm{Ad}(\underT{g}) \circ \underT{j}$ for
some $\underT{g} \in \GT(F)$.  The bijectivity of the lower horizontal map
follows from the  equivalences
$$
  \mathrm{Ad}(\underT{g}^{-1} \eta(g)) \circ \underT{j}
  = \underT{j}
   \Leftrightarrow \underT{g}^{-1} \eta(g) \in \underT{j}\underT{S}
 \Leftrightarrow \eta(g)  \underT{j}\underT{S} = \underT{g}\,
 \underT{j}\underT{S}
   \Leftrightarrow g\,jS \in \bar{\eta}^{-1}( \GT(F)
  \underT{j}\ST/\underT{j}\ST) . 
  $$
  We continue by examining the upper horizontal map in the square on the
  right. This map induced by $\eta$ is clearly well-defined.  
For its surjectivity, suppose $\underT{g} \in \GT$.  Since
$\eta(G) \supset [\GT, \GT]$, we have
$\GT = \eta(G) \, Z(\GT)$.  It follows in turn that $\underT{g} = \eta(g)
z$ for some $g \in G$ and $z \in Z(\GT) \subset \underT{j}\underT{S}$, and
$$\eta(G(F)) \, \eta(g) \, \underT{j}\underT{S} = \eta(G(F)) \, \underT{g} z^{-1}\,
\underT{j}\underT{S} = \eta(G(F)) \, \underT{g} \, \underT{j}\underT{S}.$$
If $\underT{g}\, \underT{j}\underT{S} \in (\GT/\underT{j}\underT{S})(F)$ then
it is fixed by any $\sigma \in \Gamma$ and
$$\eta(\sigma(g))\,\underT{j}\underT{S} = \sigma(\eta(g)\, 
\underT{j}\underT{S}) = \sigma(\underT{g} \,\underT{j}\underT{S}) = \underT{g} \,
\underT{j}\underT{S} = 
\eta(g)\, \underT{j}\underT{S}.$$
The latter implies that $\eta(\sigma(g)g^{-1}) \in  \underT{j}\underT{S}
$ and in turn that $\sigma(g)\, jS = g\, jS$. 
This proves that $g\,jS \in (G/jS)(F)$ and that the induced map is surjective.

To prove that the map is injective, suppose $g_{1}\,jS , g_{2}\,jS \in (G/jS)(F)$ and
$$\eta(G(F)) \, \eta(g_{1}) \, \underT{j}\underT{S} = \eta(G(F))\,  \eta(g_{2}) 
\, \underT{j}\underT{S}.$$
Then $\eta(g_{1}) = \eta(g) \, \eta(g_{2}) \underT{s}$ for some $g \in G(F)$ and
$\underT{s} \in \underT{j}\underT{S}$.  This implies
$$
\eta(g_{2}^{-1}g^{-1}g_{1}) = \underT{s} \in \underT{j}\underT{S}\\
   \Rightarrow g_{2}^{-1}  g^{-1} g_{1} \in jS\\
   \Rightarrow G(F)\, g_{1}\, jS = G(F)\, g_{2}\, jS.
$$
   This establishes the bijectivity of the upper horizontal map.

Finally, the lower horizontal map is defined by restricting the upper
horizontal map.  This yields a bijection
$$G(F) \backslash \, \bar{\eta}^{-1}(\GT(F)
  \underT{j}\underT{S}/\underT{j}\underT{S} ) 
  \rightarrow \eta(G(F)) \backslash \,
  \GT(F)\underT{j}\underT{S}/\underT{j}\underT{S},$$ 
and the set on the right may be identified with $ \eta(G(F))\backslash
\, \GT(F)/ \underT{j}\underT{S}(F)$ under the apparent bijection
  $$\GT(F)/ \underT{j}\underT{S}(F) \rightarrow  \GT(F)
  \underT{j}\underT{S}/ 
  \underT{j}\underT{S}.$$ 
\end{proof}

The following lemma is an immediate consequence of Lemma
\ref{lastcor} and decomposition formula (\ref{eq:decompFormula}).
\begin{lemma}
  \label{rightsquare1}
An irreducible representation $\pi$ of $G(F)$ is a subrepresentation
of $\pi_{(\underT{j}\underT{S}, \underT{j}\underT{\theta})} \circ \eta$ if
and only if $\pi \cong   \pi_{({^gj}S, \,{^gj\theta})}$  for $gjS \in
\bar{\eta}^{-1}(\underT{j}\underT{S})$ ($\Ad(g) \circ j$ is in the fibre of
$\bar{\eta}$ over $\underT{j}$).   In particular, (\ref{rightsquare})
holds for $\underT{g} = 1$.
\end{lemma}

We are now ready to prove Proposition \ref{leftsquare} 
\begin{proof}[Proof of Proposition \ref{leftsquare}]
Let $\underT{j}' = \Ad(\underT{g}) \circ \underT{j}$.  Arguing as in the
proof of Proposition \ref{prop:embeddings} there exists $g'S \in
(G/S)(F)$ such that $j' = \Ad(g') \circ j$ is sent to $\underT{j}'$
under $\bar{\eta}$.  We may replace $\underT{j}$ with $\underT{j}'$, and
$j$ with $j'$ in the earlier results. Lemma \ref{rightsquare1}
then tells us that the irreducible subrepresentations of
$$\pi_{({^{\underT{g}}\underT{j}}\underT{S},
  \,{^{\underT{g}}\underT{j}\underT{\theta}})} \circ \eta =
\pi_{(\underT{j}'\underT{S}, 
  \,\underT{j}'\underT{\theta})} \circ \eta $$
are $\pi_{({^hj'}S, \,{^hj'\theta})} = \pi_{({^{hg}j}S, \,{^{hg}j\theta})}$, where $hj'S \in
\bar{\eta}^{-1}(\underT{j}'\underT{S})$.  The corollary now follows from 
$$hj'S \in \bar{\eta}^{-1}(\underT{j}'\underT{S}) \Leftrightarrow
\eta(hg'jS(g')^{-1}) \subset \underT{g} \underT{j}\underT{S} \underT{g}^{-1}
\Leftrightarrow \eta(hg')\underT{j}\underT{S} =
\underT{g}\underT{j}\underT{S}
\Leftrightarrow (hg')jS \in \bar{\eta}^{-1}(\underT{g}\underT{j}
\underT{S})$$
and setting $g = hg'$.
\end{proof}

A simple consequence of Proposition \ref{leftsquare} and diagram
(\ref{comdiag}) is the following corollary, analogous to \cite[Corollary 5.8]{Solleveld:2020}.

\begin{corollary}
The $L$-packet $\Pi_{\varphi}$ consists of the irreducible
representations appearing on the right of (\ref{rightsquare}).  More precisely,
$$\Pi_{\varphi} = \coprod_{\Ad(\underT{g}) \circ \underT{j} \in
  \underT{J}(F)} \{\pi_{({^gj}S, \,{^gj\theta})} : \Ad(g) \circ j \in 
\bar{\eta}^{-1}(\Ad(\underT{g}) \circ \underT{j}) \},$$
or equivalently, $\Pi_{\param} = \{ [\underT{\pi}\circ \eta] : \underT{\pi}\in \Pi_{\paramT}\}.$  
\end{corollary}
\begin{proof}
The corollary follows from the commutativity of (\ref{comdiag}) and the
partition
$$J(F) =  \coprod_{\Ad(\underT{g}) \circ \underT{j} \in
  \underT{J}(F)}  \bar{\eta}^{-1}(\Ad(\underT{g}) \circ \underT{j}).$$ 
\end{proof}

We continue by expressing Proposition \ref{leftsquare}, which concerns
the left-hand 
side of diagram (\ref{comdiag}), in terms of the characters 
of the dual groups, which appear on the right of the diagram.
\begin{corollary}
  \label{sollconj1}
Suppose $\underT{g}\, \underT{j}\ST \in (\GT/\underT{j}\underT{S})(F)$
and $g \, jS \in
(G/jS)(F)$.  Then $\pi_{({^gj}S, \,{^gj\theta})}$ is a subrepresentation 
of $\pi_{({^{\underT{g}}\underT{j}}\underT{S},
  \,{^{\underT{g}}\underT{j}\underT{\theta}})} \circ \eta$ if 
and only if $\tau_{g} \circ \widehat{\eta} = \underT{\tau}_{\underT{g}}$.  In
addition,
$$\pi_{({^{\underT{g}}\underT{j}}\underT{S},
  \,{^{\underT{g}}\underT{j}\underT{\theta}})} \circ \eta \cong \bigoplus_{g
  \in  G(F) \backslash \, (G / jS)(F)}   \mathrm{Hom}(
\underT{\tau}_{\underT{g}}, \tau_{g} \circ \widehat{\eta})
\otimes \, \pi_{({^gj}S, \,{^gj\theta})}.$$ 
\end{corollary}

\begin{remark}
Since $\underT{\tau}_{\underT{g}}$ and $\tau_g$ are characters, we
have that $\dim \mathrm{Hom}(\underT{\tau}_{\underT{g}}, \tau_{g}\circ
\hat{\eta})$ is either equal to $1$ or $0$. As such,
$\pi_{({^{\underT{g}}\underT{j}}\underT{S}, 
  \,{^{\underT{g}}\underT{j}\underT{\theta}})} \circ \eta$ is multiplicity free for all $\underT{g}\, \underT{j}\ST \in (\GT/\underT{j}\underT{S})(F)$. One can also prove that the decomposition is multiplicity free using tools directly from the classification theory of supercuspidal representations such as \cite{HM:2008,Murnaghan:2011}  (as done for instance in \cite[Section 6]{thesisPaper}).
\end{remark}

Yet another manner of expressing the decompositions of the corollaries
is to set $\underT{\varrho} = \underT{\tau}_{\underT{g}}$ and set
$\pi(\underT{\varphi}, \underT{\varrho}) =  \pi_{({^{\underT{g}}\underT{j}}\underT{S}, 
  \,{^{\underT{g}}\underT{j}\underT{\theta}})}$.  Then the decomposition
of Corollary \ref{sollconj1} reads as
\begin{equation}
  \label{plaineq}
\pi(\underT{\varphi}, \underT{\varrho}) \circ \eta \cong \bigoplus_{
  \varrho \in \pi_{0}(\widehat{S}^{\Gamma} /
  Z(\widehat{G})^{\Gamma})^{D}}   \mathrm{Hom}(
\underT{\varrho}, \varrho  \circ \widehat{\eta})
\otimes \, \pi(\varphi, \varrho),
\end{equation}
thus completing the proof of Theorem \ref{th:Solleveld}.

This form of the decomposition is the one proposed by Solleveld in
\cite[Conjecture 2]{Solleveld:2020} when $\eta$ is chosen to preserve
fixed pinnings of $G$ and $\GT$ as we have done in 
Section \ref{sec:packetData}.

In Section \ref{sec:packetData} we also
remarked that dropping the requirement of preserving the pinnings, but
keeping the dual homomorphism $\widehat{\eta}$ fixed, allows one to replace
$\eta$ with $\eta' = \Ad(\underT{t}') \circ \eta$ where
$\underT{t}'Z(\GT) \in (\underT{T}/Z(\GT))(F)$.  
It is convenient to write $\underT{t} = (\underT{t}')^{-1}$, for in this
arrangement 
$$
\pi_{({^{\underT{g}}\underT{j}}\underT{S}, 
  \,{^{\underT{g}}\underT{j}\underT{\theta}})} \circ \eta'  =
\pi_{({^{\underT{g}}\underT{j}}\underT{S}, 
  \,{^{\underT{g}}\underT{j}\underT{\theta}})} \circ \Ad(\underT{t}')
\circ \eta
= \pi_{({^{\underT{t}\underT{g}}\underT{j}}\underT{S},
  \,{^{\underT{t}\underT{g}}\underT{j}\underT{\theta}})} \circ \eta$$
and Corollary \ref{sollconj1} yields
\begin{equation}
 \label{sollconj1.5}
\pi_{({^{\underT{g}}\underT{j}}\underT{S}, 
  \,{^{\underT{g}}\underT{j}\underT{\theta}})} \circ \eta' \cong \bigoplus_{g
  \in  G(F) \backslash \, (G / jS)(F)}   \mathrm{Hom}(
\underT{\tau}_{\underT{t}\underT{g}}, \tau_{g} \circ \widehat{\eta})
\otimes \, \pi_{({^gj}S, \,{^gj\theta})}.
\end{equation}
This decomposition can be rephrased in terms of characters on the dual
group as in (\ref{plaineq}).  The introduction of $\Ad(\underT{t}')$ on the
left of (\ref{plaineq}) introduces the character
$\underT{\tau}_{\underT{t}'}$ on the right, as one sees in the following corollary.
\begin{corollary}
  \label{sollconj2}
Suppose  $\underT{g} \in (\GT/\underT{j}\underT{S})(F)$ and $\eta'
= \Ad(\underT{t}') \circ \eta$, where 
$\underT{t}'Z(\GT) \in (\underT{T}/Z(\GT))(F)$.  Set
$\underT{\varrho} = \underT{\tau}_{\underT{g}}$,
$\pi(\underT{\varphi},\underT{\varrho}) = \pi_{({^{\underT{g}}\underT{j}}\underT{S}, 
  \,{^{\underT{g}}\underT{j}\underT{\theta}})}$.  Then
$$
\pi(\underT{\varphi}, \underT{\varrho}) \circ \eta' \cong \bigoplus_{
  \varrho \in \pi_{0}(\widehat{S}^{\Gamma} /
  Z(\widehat{G})^{\Gamma})^{D}}   \mathrm{Hom}(
\underT{\varrho}, (\varrho  \circ \widehat{\eta}) \otimes \underT{\tau}_{\underT{t}'})
\otimes \, \pi(\varphi, \varrho).
$$
\end{corollary}
\begin{proof}
The character
$\underT{\tau}_{\underT{t}\underT{g}}$ appearing in  (\ref{sollconj1.5})
corresponds to the  
element $\underT{t}\underT{g} \in (\GT/\underT{j}\underT{S})$.
According to Lemma \ref{firstlem} (applied to $\GT$ and $\underT{j}\ST$), the element $\underT{t}\underT{g}$
represents the cocycle $z_{\underT{t}\underT{g}}$ defined by
$$z_{\underT{t}\underT{g}}(\sigma) = (\underT{t} \underT{g})^{-1} \,
\sigma(\underT{t} \underT{g}), \quad \sigma \in \Gamma.$$ 
Since $\underT{t} Z(\GT) \in (\underT{T}/Z(\GT))(F)$, we
have in turn that $\underT{t}^{-1} \sigma(\underT{t}) \in Z(\GT)$
and 
$$(\underT{t} \underT{g})^{-1} \,\sigma(\underT{t} \underT{g}) =
\underT{g}^{-1} \, (\underT{t}^{-1} \sigma(\underT{t}) ) \,
\sigma(\underT{g}) =  \underT{t}^{-1} \sigma(\underT{t})  \,
\underT{g}^{-1}\sigma(\underT{g}).$$
Therefore $z_{\underT{t}\underT{g}} = z_{\underT{t}} \,z_{\underT{g}}$ in
the group $Z^{1}(F, jS)$.  Applying $\underT{\tau}$, we obtain
$\underT{\tau}_{\underT{t} \underT{g}} = \underT{\tau}_{\underT{t}}
\otimes \underT{\tau}_{\underT{g}}$.  A similar argument leads to $1 =
\underT{\tau}_{\underT{t}} \otimes \tau_{\underT{t}^{-1}}$, and so
$\underT{\tau}_{\underT{t}}^{-1} = \underT{\tau}_{\underT{t}'}$.  By
setting $\varrho = \tau_{g}$ in (\ref{sollconj1.5}) we see that
$$ \mathrm{Hom}(\underT{\tau}_{\underT{t}\underT{g}}, \tau_{g} \circ
\widehat{\eta}) = \mathrm{Hom}(\underT{\tau}_{\underT{t}}
\underT{\tau}_{\underT{g}}, \varrho \circ \widehat{\eta}) =
\mathrm{Hom}(\underT{\varrho}, (\varrho \circ \widehat{\eta}) \otimes
\underT{\tau}_{\underT{t}'}). 
$$
The corollary now follows from (\ref{sollconj1.5}) and the
commutativity of (\ref{comdiag}).
\end{proof}
The decomposition of Corollary \ref{sollconj2} resembles the one appearing in
Solleveld's conjecture  \cite[Conjecture 2]{Solleveld:2020}.  The only
difference is the appearance of ${^S\eta'}^{*}(\varrho)$ instead of
$(\varrho  \circ \widehat{\eta}) \otimes \underT{\tau}_{\underT{t}'}$.
Translating into our setting,  \cite[(5.4)]{Solleveld:2020} expresses
${^S\eta'}^{*}(\varrho)$ as
$${^S\eta'}^{*}(\varrho) = (\varrho \circ \widehat{\eta}')\otimes
\tau_{\underT{\varphi}}(t') = (\varrho \circ \widehat{\eta})\otimes
\tau_{\underT{\varphi}}(t')$$ 
where $\tau_{\underT{\varphi}}: \GT(F) \backslash \,
(\GT/Z(\GT))(F) \rightarrow
\pi_{0}(\widehat{\underT{S}}^{\Gamma}/Z(\widehat{\GT})^{\Gamma})^{D}$
is a homomorphism defined in \cite[(2.12), Lemma 2.1]{Solleveld:2020}. 

\begin{proposition}\label{prop:otherpinning}
Under the regularity assumptions
on $\param$ and $\paramT$, Corollary \ref{sollconj2} coincides with
\cite[Conjecture 2]{Solleveld:2020}. That is,  
$(\varrho \circ \widehat{\eta}) \otimes \underT{\tau}_{\underT{t}'} = {^S\eta'}^{*}(\varrho),$
and therefore,
$$\begin{aligned}
 \pi(\underT{\varphi}, \underT{\varrho}) \circ \eta' &\cong \bigoplus_{
  \varrho \in \pi_{0}(\widehat{S}^{\Gamma} /
  Z(\widehat{G})^{\Gamma})^{D}}   \mathrm{Hom}(
\underT{\varrho}, (\varrho  \circ \widehat{\eta}) \otimes \underT{\tau}_{\underT{t}'})
\otimes \, \pi(\varphi, \varrho)\\
&= \bigoplus_{
  \varrho \in \pi_{0}(\widehat{S}^{\Gamma} /
  Z(\widehat{G})^{\Gamma})^{D}}   \mathrm{Hom}(
\underT{\varrho}, {^S\eta'}^{*}(\varrho))
\otimes \, \pi(\varphi, \varrho)
\end{aligned}$$
\end{proposition}

\begin{proof}
All we need to show is
\begin{equation}
  \label{taus}
  \tau_{\underT{\varphi}}(\underT{t}') = \underT{\tau}_{\underT{t}'}.
\end{equation}
Under the assumption that the characteristic of $F$ is zero, this
identity is given  in the proof of \cite[Lemma
  4.2]{Kaletha:2013} in which Kaletha 
writes $\tau_{\underT{\varphi}}(\underT{t}')$ as $(\mathfrak{w},
\mathfrak{w}')$ for a pair of
Whittaker data conjugate under $\underT{t}'$ \cite[\emph{pp.}
  2454-2455]{Kaletha:2013}, and $\underT{\tau}_{\underT{t}'}$ is
expressed in terms of the cocycle $z = z_{\underT{t}'}$ and the
Tate-Nakayama pairing (Lemma \ref{firstlem}).  His argument references
the work in  \cite[Appendix A]{ks} on
the hypercohomology of complexes of tori of length two.  The argument is the
same in positive characteristic, and to convince the reader that
nothing runs awry we offer a sketch.

We write a complex of $F$-tori of length two simply as $\mathcal{T}
\rightarrow \mathcal{S}$, concentrated in degrees $0$ and $1$.
Let $\mathcal{T}$ be a maximal torus is $\underT{G}$ which is defined
over $F$.  Let $\underT{Z} = Z(\underT{G})$ and 
$\mathcal{T}_{\mathrm{ad}} = \mathcal{T}/\underT{Z}$.  Then
$\mathcal{T}_{\mathrm{ad}}$ is a maximal torus in
$\underT{G}_{\mathrm{ad}} = \underT{G}/\underT{Z}$, and its Langlands dual is
$(\hat{\mathcal{T}})_{\mathrm{sc}}$, a maximal torus in the
simply-connected dual group $(\hat{\underT{G}})_{\mathrm{sc}} =
\widehat{\underT{G}_{\mathrm{ad}}}$. The
map $(\hat{\underT{G}})_{\mathrm{sc}} \rightarrow [\hat{\underT{G}},
  \hat{\underT{G}}] \rightarrow \hat{G}$ induces a map
$(\hat{\mathcal{T}})_{\mathrm{sc}} \rightarrow \hat{\mathcal{T}}$ with
kernel denoted by $\hat{\underT{Z}}$.

The sequence
$$0 \longrightarrow (0 \rightarrow \mathcal{T}_{\mathrm{ad}})
\stackrel{(0,id)}{\longrightarrow} (\mathcal{T} \rightarrow
\mathcal{T}_{\mathrm{ad}}) \stackrel{(id,0)}{\longrightarrow} (\mathcal{T}
\rightarrow 0 ) \longrightarrow 0$$
is a short exact sequence, and therefore gives rise
to a long exact sequence of Galois hypercohomology.  The first
hypercohomology portion of this long exact sequence appears in the second row
of the following diagram.
\begin{equation}
 \label{bigdiag}
  \xymatrix@1{\mathcal{T}_{\mathrm{ad}}(F) \ar[r] \ar[d]_{\cong} &
    H^{1}(F, \underT{Z})      \ar[r] \ar[d]_{\cong} & H^{1}(F,
    \mathcal{T}) \ar[d]_{\cong}\\  
H^{1}(F,0 \rightarrow \mathcal{T}_{\mathrm{ad}}) \ar[r] \ar[d] &
H^{1}(F, \mathcal{T} \rightarrow \mathcal{T}_{\mathrm{ad}}) \ar[r]
\ar[d] & H^{1}(F,  \mathcal{T} \rightarrow 0)  \ar[d] \\
H^{1}(W_{F}, (\hat{\mathcal{T}})_{\mathrm{sc}} \rightarrow 0)^{D}
\ar[r] \ar[d]_{\cong} &  H^{1}(W_{F}, (\hat{\mathcal{T}})_{\mathrm{sc}} \rightarrow
\hat{\mathcal{T}})^{D} \ar[r] \ar[d]_{\cong} & H^{1}(W_{F}, 0
\rightarrow \hat{\mathcal{T}})^{D} \ar[d]_{\cong}\\
H^{1}(W_{F}, (\hat{\mathcal{T}})_{\mathrm{sc}})^{D} \ar[r] & H^{1}(W_{F},
\hat{\underT{Z}})^{D} \ar[r] & (\hat{\mathcal{T}}^{W_{F}})^{D}
  }
\end{equation}
The third row of the diagram is the Pontryagin dual of the analogous
sequence of the dual tori together with the action of the Weil group on the dual
tori.  The vertical arrows between the second and third rows are given
by the pairing \cite[(A.3.15)]{ks} (\emph{cf.} \cite[Proposition A.4]{Dillery}).  

The first row of the diagram is defined as follows.  Any element in
$\mathcal{T}_{\mathrm{ad}}(F)$ may be written as $t'\underT{Z}$ where $t'
\in \mathcal{T}$.  The first horizontal map sends
$t'\underT{Z}$ to the class of the 1-cocycle $z_{t'}$ defined by
\begin{equation}
  \label{1cocyle}
  z_{t'}(\sigma) = (t')^{-1} \, \sigma(t'), \quad \sigma \in \Gamma.
  \end{equation}
 The second horizontal map
 in the top row carries $z_{t'}$ to itself.
The vertical isomorphisms between the the first and second rows are
canonical and left as  exercises (\emph{cf.} A.1 \cite{ks}).

The maps of the fourth row and the
isomorphisms with the the third 
row follow just as the ones for the first and second rows.  Starting
with $h \in \hat{\mathcal{T}}^{W_{F}}$, we choose $h' \in
(\hat{\mathcal{T}})_{\mathrm{sc}}$ so that $h = h' \hat{\underT{Z}}$.
We then define an  element $c_{h'}$ in $H^{1}(W_{F},
\hat{\underT{Z}})$ or $H^{1}(W_{F}, (\hat{\mathcal{T}})_{\mathrm{sc}})$ by
imitating (\ref{1cocyle})
(\emph{cf.}~\cite[(2.8)]{Solleveld:2020}).  The maps in the fourth
row are the ones dual to those just defined.

Diagram (\ref{bigdiag}) is commutative, due to the functoriality of
the vertical morphisms (\emph{cf.} \cite[(A.3.5)]{ks}).  By making a
comparison with the cohomology crossed modules, one can also see that
the map $H^{1}(F, \underT{Z}) \rightarrow H^{1}(W_{F},
\hat{\underT{Z}})^{D}$ in the middle of (\ref{bigdiag}) is independent
of the choice of maximal torus $\mathcal{T}$ (see the proof of
\cite[Proposition 5.19]{Kaletha:2015}).
Combining these facts with $\mathcal{T} = \underT{T}$ and
$\mathcal{T} =j\underT{S}$,  we obtain the commutative diagram 
$$
\xymatrix@1{\underT{T}_{\mathrm{ad}}(F)  \ar[d] \ar[r]& 
H^{1}(F, \underT{Z} )  \ar[r] \ar[d]
 & H^{1}(F, j \underT{S})  \ar[d] \\
H^{1}(W_{F}, (\hat{\underT{T}})_{\mathrm{sc}})^{D}
\ar[r]  &  H^{1}(W_{F}, \hat{\underT{Z}})^{D} \ar[r]  &
(\hat{\underT{S}}^{\varphi(W_{F})} )^{D} 
}$$
Consider the element $t'\underT{Z} \in
\underT{T}_{\mathrm{ad}}(F)$ in the top left of this diagram.  If we
trace $t' \in \underT{T}$ through the vertical map on the left and the
lower horizontal maps then, we arrive at
$\tau_{\underT{\varphi}}(t')$ (\cite[(A.3.13)]{ks},
\cite[(2.11)]{Solleveld:2020}).  Alternatively, tracing $t' \in
\underT{T}$ through the upper horizontal maps followed by the vertical
map on the right we arrive at $\tau_{t'}$ (\cite[(A.3.14)]{ks}).  The
commutativity of the diagram yields the desired identity (\ref{taus}).

\end{proof}

\appendix
\section{Appendices}

\subsection{Some Intertwining Maps and Combining Cosets}\label{sec:appendix}

This appendix groups together some important results that were cited in the main text, but whose proofs did not necessarily fit with the flow of the main narrative.

\begin{proposition}\label{prop:conjInd}
Let $G$ and $\GT$ be as per Theorem \ref{th:desideratum}, and let $G_1\subseteq G_2 \subseteq G(F)$. Let $\underT{g}\in\GT(F)$ and let $\AdT(\underT{g})$ be the automorphism of $G(F)$ as defined in Section \ref{sec:MainTheorem}. Then, given a representation $\pi$ of $G_1$,
$$\Ind_{\AdT(\underT{g})(G_1)}^{\AdT(\underT{g})(G_2)}\left( \pi \circ \AdT(\underT{g}^{-1})\right) \simeq \Ind_{G_1}^{G_2}\pi \circ \AdT(\underT{g}^{-1}).$$
\end{proposition}

\begin{proof}
Let $V$ denote the vector space on which the representations $\pi$ and $\pi\circ \AdT(\underT{g}^{-1})$ act. Then, $\Ind_{\AdT(\underT{g})(G_1)}^{\AdT(\underT{g})(G_2)}\left( \pi\circ \AdT(\underT{g}^{-1}) \right)$ and $\Ind_{G_1}^{G_2}\pi\circ\AdT(\underT{g}^{-1})$ act on the vector spaces
$$\underT{W} = \{ \underT{f} : \AdT(\underT{g})(G_2)\rightarrow V \text{ locally constant } | \underT{f}(gh) = (\pi\circ\AdT(\underT{g}))(h)^{-1}\underT{f}(g) \text{ for all } h\in \AdT(\underT{g})(G_1) \}$$
and
$$W = \{ f: G_2\rightarrow V \text{ locally constant } | f(gh) = \pi(h)^{-1}f(g) \text{ for all } h\in G_1\},$$
respectively. Define a linear map $\mathcal{F}$ as follows.
$$\begin{aligned}
\mathcal{F}: W &\rightarrow \underT{W}\\
f &\mapsto f\circ \AdT(\underT{g}^{-1}).
\end{aligned}$$
The map $\mathcal{F}$ is bijective as a consequence of $\AdT(\underT{g}^{-1}): \AdT(\underT{g})(G_2) \rightarrow G_2$ being bijective. It is then an easy computation to verify that $\mathcal{F}$ intertwines the representations $\Ind_{\AdT(\underT{g})(G_1)}^{\AdT(\underT{g})(G_2)}\left(\pi\circ \AdT(\underT{g}^{-1}) \right)$ and $\Ind_{G_1}^{G_2}\circ \AdT(\underT{g}^{-1})$. 
\end{proof}

\begin{proposition}\label{prop:induction}
Let $\mu: H'_2 \rightarrow H_2$ be a morphism of groups, $H_1\subset H_2$ and $H_1'\subset H_2'$ subgroups such that $\mu(H_2')\cap H_1 = \mu(H_1')$ and $\ker(\mu)\subset H_1'$. Let $\pi$ be a representation of $H_1$. Then,
$$\Ind_{H_1'}^{H_2'}(\pi\circ \mu) \simeq \left( \Ind_{\mu(H_1')}^{\mu(H_2')}\Res_{\mu(H_1')}^{H_1}\pi \right) \circ \mu.$$  
\end{proposition}

\begin{proof}
Let $V$ denote the vector space on which the representations $\pi$ and $\pi\circ\mu$ act. Then, the representations $\left(\Ind_{\mu(H_1')}^{\mu(H_2')}\Res^{H_1}_{\mu(H_1')}\pi\right)\circ \mu$ and $\Ind_{H_1'}^{H_2'}(\pi\circ\mu)$ act on the vector spaces 
$$W_\mu = \{f_\mu: \mu(H_2') \rightarrow V \text{ locally constant } | f_\mu(gh) = \pi(h)^{-1}f_\mu(g) \text{ for all } h\in \mu(H_1')\}$$ and $$W = \{f: H_2' \rightarrow V \text{ locally constant } | f(gh) = \pi\circ\mu(h)^{-1}f(g) \text{ for all } h\in H_1'\},$$ respectively. Define a linear map $\mathcal{F}$ as follows:
\begin{align*}
\mathcal{F}: W_\mu &\rightarrow W\\
f_\mu &\mapsto f_\mu\circ \mu.
\end{align*} 
One sees that the map $\mathcal{F}$ is injective, as $\mu : H_2' \rightarrow \mu(H_2')$ is surjective. Since $\ker\mu \subset H_1'$, elements of $W$ are constant on the cosets of $H_2'/\ker\mu$, allowing us to define an inverse map and conclude that $\mathcal{F}$ is bijective. It is then an easy computation to verify that $T$ intertwines the representations $\left(\Ind_{\mu(H_1')}^{\mu(H_2')}\Res_{\mu(H_1')}^{H_1}\pi \right)\circ \mu$ and $\Ind_{H_1'}^{H_2'}(\pi\circ\mu)$.

\end{proof}

\begin{lemma}\label{lem:modifiedisothm}
Let $A,B,C$ be subgroups such that $A\subseteq B$. Then 
$$\bigslant{AC/C}{BC/C} \simeq AC/BC \simeq \bigslant{A}{B(A\cap C)}.$$
\end{lemma}

\begin{proof}
The first isomorphism comes from the third isomorphism theorem. For the second isomorphism, consider the map
$$\begin{aligned}
A &\rightarrow AC/ BC \\
a &\mapsto aBC.
\end{aligned}$$ This map is clearly surjective. Furthermore, its kernel is given by
$$\{a\in A : aBC = BC\} = \{a\in A : a\in BC\} = A\cap BC = B(A\cap C).$$ The conclusion then follows from the first isomorphism theorem.
\end{proof}

\begin{lemma}\label{lem:combineCosets}
Let $A,B,C,\bar{A},\bar{B},\bar{C}$ be groups that satisfy the following conditions. $A,C\subseteq B$, $\bar{A},\bar{C}\subseteq\bar{B}$ and $\bar{C}\subseteq C$. Furthermore, assume $A$ and $\bar{A}$ are normal subgroups of $B$ and $\bar{B}$, respectively. Let $L$ be a set of coset representatives of $A\setminus B / C$ and $\mathcal{L}$ be a set of coset representatives of $\bar{A}\setminus \bar{B} / \bar{C}$. Write $L\mathcal{L} = \{l\ell : l\in L, \ell \in \mathcal{L}\}$.
\begin{enumerate}
\item[1)] If $\bar{B}\subseteq C$, $A\cap \bar{B} \subseteq \bar{A}$ and $\bigslant{C}{(A\cap C)\bar{C}}\simeq \bigslant{\bar{B}}{\bar{A}\bar{C}}$, then $L\mathcal{L}$ is a set of coset representatives of $A\setminus B / \bar{C}$.
\item[2)] If $\bar{A}\subseteq A$, $A\cap C = \bar{C}$, $A = \bar{B}$ and $\bar{A}$ is a normal subgroup of $B$, then $L\mathcal{L}$ is a set of coset representatives of $\bar{A} \setminus B/C$.
\end{enumerate}
\end{lemma}

\begin{proof}
Since $A$ is normal in $B$, and $\bar{A}$ is normal in $\bar{C}$, we note that $A\setminus B / C = B/AC, A\setminus B /\bar{C} = B/A\bar{C}$ and $\bar{A}\setminus\bar{B} /\bar{C} = \bar{B}/\bar{A}\bar{C}$.

To prove part 1), we use the normality of $A$, the inclusion $\bar{C}\subseteq C$ and the isomorphism $\bigslant{C}{(A\cap C)\bar{C}}\simeq \bigslant{\bar{B}}{\bar{A}\bar{C}}$, along with the second and third isomorphism theorems to show that 
$$B/AC \simeq \bigslant{B/A\bar{C}}{\bar{B}/\bar{A}\bar{C}},$$ and thus
$$|B /AC|\cdot |\bar{B} /\bar{A}\bar{C}| = | B /A\bar{C}|.$$ We then use the inclusions $\bar{B}\subseteq C$ and $A\cap\bar{B}\subseteq \bar{A}$ to show that the map
$$\begin{aligned}
\mu: L \times \mathcal{L} &\rightarrow  B /A\bar{C}\\
(l,\ell) &\mapsto l\ell A\bar{C}
\end{aligned}$$ is injective.

The proof of part 2) follows the exact same strategy as the proof of part 1).

\end{proof}

\subsection{Virtual Characters Through Quotients}\label{sec:virtualCharacters}
 
Virtual characters are required in the construction of the depth-zero part of a supercuspidal representation, as recalled in Figure~\ref{fig:summaryRho}. In this appendix, we derive a result involving the behaviour of virtual characters through central quotients.

A brief summary of virtual characters was provided in \cite[Appendix]{thesisPaper}. Following the notation therein, we let $G$ denote a connected reductive group over an algebraically closed field of characteristic $p$. Let $\mathrm{Fr}: G\rightarrow G$ be a Frobenius map (as per \cite[Section 1.17]{Carter:1985}). Note that when $G$ is defined over a finite field $\res$, a generator of $\Gal(\resun/\res)$ defines a Frobenius map $\mathrm{Fr}$ and $G^{\Gal(\resun/\res)} = G^{\mathrm{Fr}}$.

Given a maximal torus $T$ of $G$ which is $\mathrm{Fr}$-stable and an irreducible complex character $\theta$ of $T^{\mathrm{Fr}}$, there is a corresponding virtual character $R^G_{T,\theta}$ which maps $G^{\mathrm{Fr}}$ into $\mathbb{C}$. When the underlying group is clear, we omit the superscript $G$ and write $R_{T,\theta}$.

One formula for the virtual character is provided by \cite[Theorem 4.2]{DL:1976} and is described as follows. Given $g\in G^{\mathrm{Fr}}$, we let $g = su = us$ be its Jordan decomposition. Then
\begin{align*}
R_{T,\theta}(g) = \frac{1}{|(C_G(s)^\circ)^{\mathrm{Fr}}|}\sum_{\substack{x\in G^{\mathrm{Fr}}\\ x^{-1}sx \in T^{\mathrm{Fr}}}}\theta(x^{-1}sx)R_{xTx^{-1},1}^{C_G(s)^\circ}(u).
\end{align*}


It is known that virtual characters remain constant under isomorphism, and their behaviour under particular types of restrictions is given by \cite[Theorem A.2]{thesisPaper}. At the time of writing, we have not found a reference that discusses how virtual characters behave via central quotients, so we describe it with the following theorem.

\begin{theorem}\label{th:virtualCharQuotient}
Let $Z$ be a central subgroup of $G$ which is $\mathrm{Fr}$-stable, and consider the quotient map $q : G \rightarrow \GZ$, where $\GZ = G/Z$.  Let $T$ be a maximal torus of $G$ which is $\mathrm{Fr}$-stable and let $T_Z = q(T)$. Let $\theta$ and $\theta_Z$ be irreducible complex characters of $T^{\mathrm{Fr}}$ and $T_Z^{\mathrm{Fr}}$, respectively, satisfying $\theta = \theta_Z\circ q$. Then $R_{T,\theta} = R_{T_Z,\theta_Z} \circ q$.
\end{theorem}

In order to prove this theorem, we will require the following lemma on cohomology.

\begin{lemma}\label{lem:cohomology}
Let $Z, \GZ$ and $q$ be as in Theorem \ref{th:virtualCharQuotient}. Let $H$ be a connected subgroup of $G$ which contains $Z$ and is $\mathrm{Fr}$-stable. Then $H^1(\mathrm{Fr},Z) \simeq q(H)^{\mathrm{Fr}}/\left(H^{\mathrm{Fr}}/Z^{\mathrm{Fr}} \right)$. In particular, if $T$ is a maximal torus of $G$, then there exists a set of coset representatives of $\GZ^{\mathrm{Fr}}/(G^{\mathrm{Fr}}/Z^{\mathrm{Fr}})$ that is contained in $q(T)^{\mathrm{Fr}}$.
\end{lemma}

\begin{proof}
Consider the exact sequence
$$1 \rightarrow Z \rightarrow H \rightarrow q(H) \rightarrow 1.$$
Given that $H$ is connected, Lang's theorem implies $H^1(\mathrm{Fr},H) = 1$, giving us the following exact cohomology sequence \cite[Theorem 12.3.4]{Springer:LAG}:
$$1\rightarrow Z^{\mathrm{Fr}} \rightarrow H^{\mathrm{Fr}}\rightarrow q(H)^{\mathrm{Fr}} \rightarrow H^1(\mathrm{Fr},Z) \rightarrow 1.$$ The exactness of the sequence implies $H^1(\mathrm{Fr},Z) \simeq q(H)^{\mathrm{Fr}}/\left( H^{\mathrm{Fr}}/Z^{\mathrm{Fr}}\right)$.

For the second part, let $D$ be a set of coset representatives of $\GZ^{\mathrm{Fr}}/\left( G^{\mathrm{Fr}}/Z^{\mathrm{Fr}} \right)$. By what precedes above, one has $H^1(\mathrm{Fr},Z) \simeq \GZ^{\mathrm{Fr}}/\left( G^{\mathrm{Fr}}/Z^{\mathrm{Fr}} \right)$ and $H^1(\mathrm{Fr},Z)\simeq q(T)^{\mathrm{Fr}}/\left( T^{\mathrm{Fr}}/Z^{\mathrm{Fr}}\right)$. From \cite[Section 12.3.3]{Springer:LAG}, one sees from the definitions of the maps $\GZ^{\mathrm{Fr}}\rightarrow H^1(\mathrm{Fr},Z)$ and $q(T)^{\mathrm{Fr}}\rightarrow H^1(\mathrm{Fr},Z)$ that the elements of $D$ can be chosen from $q(T)^{\mathrm{Fr}}$ without loss of generality.
\end{proof}

\begin{proof}[Proof of Theorem \ref{th:virtualCharQuotient}]
Let $g\in G^{\mathrm{Fr}}$ and let $g=su=us$ be its Jordan decomposition. We must show that $R_{T,\theta}(g) = R_{T_Z,\theta_Z}\circ q(g)$. We start by studying the expression $R_{T,\theta}(g)$.

Let $C$ be a set of coset representatives of $G^{\mathrm{Fr}}/Z^{\mathrm{Fr}}$. Then, $G^{\mathrm{Fr}} = CZ^{\mathrm{Fr}}$. We thus obtain the following chain of equalities.
$$
\begin{aligned}
R_{T,\theta}(g) &= \frac{1}{|(C_G(s)^\circ)^{\mathrm{Fr}}|}\sum_{\substack{x\in G^{\mathrm{Fr}}\\ x^{-1}sx\in T^{\mathrm{Fr}}}}\theta(x^{-1}sx)R_{xTx^{-1},1}^{C_G(s)^\circ}(u) \\
&=\frac{1}{|(C_G(s)^\circ)^{\mathrm{Fr}}|}\sum_{\substack{c\in C\\ z\in Z^{\mathrm{Fr}} \\ (cz)^{-1}s(cz)\in T^{\mathrm{Fr}}}}\theta((cz)^{-1}s(cz))R_{(cz)T(cz)^{-1},1}^{C_G(s)^\circ}(u) \\
&= \frac{|Z^{\mathrm{Fr}}|}{|(C_G(s)^\circ)^{\mathrm{Fr}}|}\sum_{\substack{c\in C\\ c^{-1}sc \in T^{\mathrm{Fr}}}} \theta(c^{-1}sc)R_{cTc^{-1},1}^{C_G(s)^\circ}(u) \\
&= \frac{|Z^{\mathrm{Fr}}|}{|(C_G(s)^\circ)^{\mathrm{Fr}}|}\sum_{\substack{c\in C\\ c^{-1}sc \in T^{\mathrm{Fr}}}} \theta_Z(q(c)^{-1}q(s)q(c))R_{cTc^{-1},1}^{C_G(s)^\circ}(u).
\end{aligned}
$$
We claim that $R_{cTc^{-1},1}^{C_G(s)^\circ}(u) = R_{q(c)T_Zq(c)^{-1},1}^{C_{G_z}(q(s))^\circ}(q(u))$. Indeed, one sees from \cite[Theorem 3.5.3]{Carter:1985} that $q(C_G(s)^\circ) = C_{\GZ}(q(s))^\circ$. Given that $Z$ is in the centre of $C_G(s)^\circ$, it follows that $C_G(s)^\circ$ and $C_{\GZ}(q(s))^\circ$ have the same adjoint group, and the adjoint map of $C_G(s)^\circ$ is the composition of $q$ and the adjoint map of $C_{\GZ}(q(s))^\circ$. According to \cite[Formula 4.1.1]{DL:1976}, the virtual character remains constant under the adjoint map. Thus, it follows that $R_{cTc^{-1},1}^{C_G(s)^\circ}(u) = R_{q(c)T_Zq(c)^{-1},1}^{C_{G_z}(q(s))^\circ}(q(u))$ as claimed. Returning to our above computation, it then follows that
$$
\begin{aligned}
R_{T,\theta}(g) &= \frac{|Z^{\mathrm{Fr}}|}{|(C_G(s)^\circ)^{\mathrm{Fr}}|}\sum_{\substack{c\in C\\ c^{-1}sc \in T^{\mathrm{Fr}}}} \theta_Z(q(c)^{-1}q(s)q(c))R_{q(c)T_Zq(c)^{-1},1}^{C_{\GZ}(q(s))^\circ}(q(u)) \\
&= \frac{|Z^{\mathrm{Fr}}|}{|(C_G(s)^\circ)^{\mathrm{Fr}}|}\sum_{\substack{\bar{c}\in G^\mathrm{Fr}/Z^{\mathrm{Fr}}\\ \bar{c}^{-1}q(s)\bar{c} \in T^{\mathrm{Fr}}/Z^{\mathrm{Fr}}}} \theta_Z(\bar{c}^{-1}q(s)\bar{c})R_{\bar{c}T_Z\bar{c}^{-1},1}^{C_{\GZ}(q(s))^\circ}(q(u)).
\end{aligned}$$

Next, we look at the expression for $R_{T_Z,\theta_Z}\circ q(g)$. By \cite[Theorem 15.4]{Humphreys:LAG}, $q(g) = q(s)q(u) = q(u)q(s)$ is the Jordan decomposition of $q(g)$. Therefore,
$$
\begin{aligned}
R_{T_Z,\theta_Z}\circ q(g) = \frac{1}{|(C_{\GZ}(q(s))^\circ)^{\mathrm{Fr}}|}\sum_{\substack{\bar{x}\in \GZ^{\mathrm{Fr}}\\ \bar{x}^{-1}q(s)\bar{x}\in T_Z^{\mathrm{Fr}}}}\theta_Z(\bar{x}^{-1}q(s)\bar{x})R_{\bar{x}T_Z\bar{x}^{-1},1}^{C_{\GZ}(q(s))^\circ}(q(u)).
\end{aligned}
$$
Now, let $D$ be a set of coset representatives of $\GZ^{\mathrm{Fr}}/(G^{\mathrm{Fr}}/Z^{\mathrm{Fr}})$. By Lemma \ref{lem:cohomology}, we may assume that $D\subset T_Z^{\mathrm{Fr}}$. 
Furthermore, by setting $H = C_G(s)^\circ$ in Lemma \ref{lem:cohomology}, we have $H^1(\mathrm{Fr},Z) \simeq (q(C_G(s)^\circ))^{\mathrm{Fr}}/((C_G(s)^\circ)^{\mathrm{Fr}}/Z^{\mathrm{Fr}})$. This isomorphism then implies
$$\frac{|H^1(\mathrm{Fr},Z)|\cdot |(C_G(s)^\circ)^{\mathrm{Fr}}|}{|Z^{\mathrm{Fr}}|} = |(C_{\GZ}(q(s))^\circ)^{\mathrm{Fr}}|.$$ This allows us to rewrite the formula of $R_{T_Z,\theta_Z}(q(g))$ as follows.
$$
\begin{aligned}
 R_{T_Z,\theta_Z}\circ q(g) &= \frac{|Z^{\mathrm{Fr}}|}{|(C_{G}(s)^\circ)^{\mathrm{Fr}}|}\frac{1}{|H^1(\mathrm{Fr},Z)|}\sum_{\substack{d\in D \\ \bar{c}\in G^{\mathrm{Fr}}/Z^{\mathrm{Fr}}\\ (\bar{c}d)^{-1}q(s)(\bar{c}d)\in T_Z^{\mathrm{Fr}}}}\theta_Z((\bar{c}d)^{-1}q(s)(\bar{c}d))R_{(\bar{c}d)T_Z(\bar{c}d)^{-1},1}^{C_{\GZ}(q(s))^{\circ}}(q(u)) \\
 &= \frac{|Z^{\mathrm{Fr}}|}{|(C_{G}(s)^\circ)^{\mathrm{Fr}}|}\sum_{\substack{\bar{c}\in G^{\mathrm{Fr}}/Z^{\mathrm{Fr}}\\ \bar{c}^{-1}q(s)\bar{c}\in T_Z^{\mathrm{Fr}}}}\theta_Z(\bar{c}^{-1}q(s)\bar{c})R_{\bar{c}T_Z\bar{c}^{-1},1}^{C_{\GZ}(q(s))^{\circ}}(q(u)) \\
 &=\frac{|Z^{\mathrm{Fr}}|}{|(C_{G}(s)^\circ)^{\mathrm{Fr}}|}\sum_{\substack{\bar{c}\in G^{\mathrm{Fr}}/Z^{\mathrm{Fr}}\\ \bar{c}^{-1}q(s)\bar{c}\in T^{\mathrm{Fr}}/Z^{\mathrm{Fr}}}}\theta_Z(\bar{c}^{-1}q(s)\bar{c})R_{\bar{c}T_Z\bar{c}^{-1},1}^{C_{\GZ}(q(s))^{\circ}}(q(u)).
\end{aligned}
$$

This last expression is precisely the same expression we had for $R_{T,\theta}(g)$ above. Thus $R_{T,\theta} = R_{T_Z,\theta_Z}\circ q$.
\end{proof}



\textsc{School of Mathematics and Statistics, Carleton University,
Ottawa, ON, Canada K1S 5B6}

\textit{E-mail addresses:} abour115@uottawa.ca, mezo@math.carleton.ca

\end{document}